\numberwithin{equation}{section}
\def\be#1\ee{\begin{equation}#1\end{equation}}
\newtheorem{proposition}{Proposition}
\theoremstyle{definition}
\newtheorem{remark}{Remark}
\newcommand{\xx}{{\bf x}}
\newcommand{\yy}{{\bf y}}
\DeclareMathOperator{\sign}{sign}
\newcommand{\vv}{{\bf v}}
\newcommand{\BB}{{\bf B}}
\newcommand{\EE}{{\bf E}}
\def\RR{\mathbb R}
\def\be{\begin{equation}}
	\def\ee{\end{equation}}
\def\bea{\begin{eqnarray}}
	\def\eea{\end{eqnarray}}
\title{Instantaneous control strategies for magnetically confined fusion plasma}
\author{G. Albi\thanks{Department of Computer Science,
University of Verona, Strada le Grazie 15, 37134 Verona, Italy. (giacomo.albi@univr.it)}\and
G. Dimarco\thanks{Department of Mathematics and Computer Science \& Center for Modeling, Computing and Statistics (CMCS), University of Ferrara, via Machiavelli 30, 44121 Ferrara, Italy. (giacomo.dimarco@unife.it)} \and
F. Ferrarese\thanks{Department of Mathematics and Computer Science \& Center for Modeling, Computing and Statistics (CMCS), University of Ferrara, via Machiavelli 30, 44121 Ferrara, Italy. (federica.ferrarese@unife.it)} \and
L. Pareschi\thanks{Maxwell Institute for Mathematical Sciences and Department of Mathematics,
Heriot-Watt University, Edinburgh, United Kingdom (l.pareschi@hw.ac.uk); Department of Mathematics and Computer Science \& Center for Modeling, Computing and Statistics (CMCS), University of Ferrara, via Machiavelli 30, 44121 Ferrara, ITALY. (lorenzo.pareschi@unife.it)}
}
\begin{document}
	\maketitle
	
		\begin{abstract}
		The principle behind magnetic fusion is to confine high temperature plasma inside a device in such a way that the nuclei of deuterium and tritium joining together can release energy. The high temperatures generated needs the plasma to be isolated from the wall of the device to avoid damages and the scope of external magnetic fields is to achieve this goal. In this paper, to face this challenge from a numerical perspective, we propose an instantaneous control mathematical approach to steer a plasma into a given spatial region. From the modeling point of view, we focus on the Vlasov equation in a bounded domain with self induced electric field and an external strong magnetic field. The main feature of the control strategy employed is that it provides a feedback on the equation of motion based on an instantaneous prediction of the discretized system. This permits to directly embed the minimization of a given cost functional into the particle interactions of the corresponding Vlasov model. The numerical results demonstrate the validity of our control approach and the capability of an external magnetic field, even if in a simplified setting, to lead the plasma far from the boundaries.
	\end{abstract}
{\bf Keywords:} Vlasov equation, instantaneous control, plasma physics, magnetic confinement, kinetic equations, particle methods.\\
	\textbf{Mathematics Subject Classification}: 35Q93, 82D10, 35Q83, 65Mxx

\section{Introduction}\label{sec:intro}
Plasma is an electrically conducting fluid where temperature is so high that electrons are separated by their atoms and free to move^^>\cite{Bitt}. This is called the fourth state of the matter and, in this state, a gas looks macroscopically neutral meaning that it is composed by roughly the same number of positive and negative charges. Perturbations from neutrality arises only at the microscopic level, and they are responsible of the passage of currents inside the plasma^^>\cite{Chen}. Since most of the visible universe appears to be in the state of plasma, its behavior and properties are of intense interest to scientists in many disciplines ranging from physics, engineering to mathematics. In this framework, the development of numerical methods for solving plasma physics problems has attracted a lot of attention in the recent years^^>\cite{Cheng, ghizzo1993eulerian,sonnendrucker1999semi,crouseilles2010conservative,filbet2018numerical,yang2014conservative,dimarco2015numerical, SonnLecture,Degond, Degond2}. In particular, it is of great interest the study of magnetized plasma for its application in the so called fusion devices, such as Tokamaks or Stellarators^^>\cite{ravindran2005real,Pnas,fasoli2016computational, davies2023magnetic,Degond2,Grandgirard}. In these machines, a strong magnetic field tries to contain the plasma during the fusion process of deuterium with tritium, and to avoid the direct contact of the charged particles with the walls of the devices which will cause failure of the system. In order to describe such phenomenon, there exists many different mathematical models and associated numerical methods^^>\cite{Pnas,BadsiMod,Bessemodel,MacroDegond,ModMethDel,Grandgirard} which are able to characterize at different levels the features observed during the time evolution of a plasma. The choice of the appropriate one depends on the spatial or temporal scale at which researchers are interested when studying the fusion phenomenon^^>\cite{ModMethDel}. Typical examples are macroscopic MHD equations or microscopic kinetic models, with or without collisions. From the numerical approximation perspective, a very important role has been played in the recent past by the so-called asymptotic preserving methods^^>\cite{Degond,Degond2,filbet2016asymptotically,filbet2017asymptotically,DimarcoRispoli,DPacta}. These schemes are able to efficiently deal with the different physical scales involved in the dynamics, and are able to treat the problem of the quasi-neutrality in plasma^^>\cite{DimarcoCrouseilles,quasinVignal,Jun,Cro}. We mention here also recent works discussing the presence of uncertainties in kinetic models of plasma physics^^>\cite{medaglia2023stochastic, Frank2021}.

In this work, we consider specifically one of the models commonly used to describe the evolution of a plasma when far from equilibrium, i.e. when collisions are not enough to reach an equilibrium state. The model is based on the Vlasov equation which describes the evolution of charged particles in an electromagnetic self-consistent or externally applied field. For one single species of the plasma, this reads as 
\begin{equation}\label{eq:vlasov}
	\frac{\partial{f(t,\xx,\vv)}}{\partial{t}} +   \vv \cdot \nabla_\xx f(t,\xx,\vv) + F(t,\xx,\vv) \cdot \nabla_\vv f(t,\xx,\vv)   = 0,
\end{equation} 
where 	\begin{equation}\label{eq:density}
	f = f(t,\xx,\vv), \qquad f: \mathbb{R}_+\times \mathbb{R}^{d_x}\times \mathbb{R}^{d_v},
\end{equation} 
is the so-called distribution function giving the probability for the species, either ions or electrons, of being in a certain position of the $d_x$ dimensional space and of the $d_v$ dimensional velocity space, and where $F(t,\xx,\vv)$ represents a force field which can take different forms. For example, for the Vlasov-Poisson case, one has
\begin{equation}\label{eq:Poisson}
	F(t,\xx,\vv) = \frac{q}{m}\EE(t,\xx), \quad \EE(t,\xx) = -\nabla_\xx \phi(t,\xx), \quad -\Delta_\xx \phi(t,\xx) = \frac{\rho(t,\xx)-\rho_0(t,\xx)}{\varepsilon_0},
\end{equation}
where $q$ is the elementary charge, $m$ the mass of a single particle, $\varepsilon_0$ the permittivity, $\EE(t,\xx)$ the electric field, $\phi(t,\xx)$ the electric potential,
\begin{equation}\label{eq:charge_density}
	\rho(t,\xx) = \int_{\mathbb{R}^{d_v}} f(t,\xx,\vv)d\vv
\end{equation}
the charge density, and $\rho_0(t,\xx)$ a static neutralizing background. On the other hand, in the Vlasov-Maxwell case one has
\begin{equation}\label{eq:F}
	F(t,\xx,\vv) = \EE(t,\xx) + \vv \times \BB(t,\xx),
\end{equation} 
where $\BB(t,\xx)$ represents the magnetic field, and where the equation \eqref{eq:vlasov} is coupled with the solution of the Maxwell equations 
\begin{equation}\label{eq:Maxwell}
	\begin{split}
		&\frac{\partial \EE(t,\xx)}{\partial t} = c^2 \nabla_\xx \times \BB(t,\xx) -\frac{\bf{J}}{\varepsilon_0}, \qquad \nabla_\xx \cdot \BB(t,\xx) = 0,\\
		&\frac{\partial \BB(t,\xx)}{\partial t} = -\nabla_\xx \times \EE(t,\xx), \qquad \nabla_\xx \cdot \EE(t,\xx) = \frac{\rho(t,\xx)-\rho_0(t,\xx)}{\varepsilon_0},  
	\end{split}
\end{equation}
with $\mathbf{J}=q\int_{\mathbb{R}^{d_v}} f(t,\xx,\vv) d\vv$ the current density, $c$ the speed of light, and the compatibility condition 
\begin{equation}\label{eq:compatibility}
	\frac{\partial \rho}{\partial t}+\nabla_\xx \cdot\mathbf{J}=0.
\end{equation}
In this work, as first step towards more realistic settings, we set ourselves in an intermediate situation between the Poisson and the Maxwell systems in which the magnetic field is only external and given while the electric field is obtained from the solution of the Poisson equation, i.e.
\begin{equation}\label{eq:our model}\begin{split}
	&-\Delta_\xx \phi(t,\xx) = \frac{\rho(t,\xx)-\rho_0(t,\xx)}{\varepsilon_0}, \quad \BB(t,\xx)=\BB_{ext}(t,\xx), \\ 	
	& F(t,\xx,\vv) = \EE(t,\xx) + \vv \times \BB_{ext}(t,\xx), \quad \EE(t,\xx)= -\nabla_\xx \phi(t,\xx).
		\end{split}
\end{equation}
The numerical solution of the multi-dimensional Vlasov-Poisson system \eqref{eq:vlasov}-\eqref{eq:Poisson}, or the Vlasov-Maxwell system \eqref{eq:vlasov}-\eqref{eq:Maxwell} or of the Vlasov-Poisson with external magnetic field \eqref{eq:our model}, is in itself a major challenge because of the high dimensionality, the intrinsic structural properties and the temporal and spatial scale involved. 

In general, the aforementioned dynamics can be discretized using various numerical techniques, such as semi-Lagrangian schemes^^>\cite{crouseilles2010conservative,russo2009semilagrangian,yang2014conservative}, finite difference, finite volume, or low-rank methods^^>\cite{crouseilles2004numerical,Hu}, and Discontinous-Galerkin schemes^^>\cite{de2012discontinuous,FilbetGal,Gamba}. These methods have demonstrated their ability to provide accurate solutions while maintaining some of the structural properties of the system. However, their computational cost increase significantly in multi-dimensional scenarios. In contrast, particle based methods, like Particle-In-Cell (PIC) method^^>\cite{chacon2016optimization,filbet2016asymptotically,filbet2017asymptotically,hairer2018energy,gu2022hamiltonian,PICDel,DimarcoCaflisch,CaflischPIC}, offer a flexible and efficient approach for plasma simulations, particularly in scenarios with complex dynamics and high-dimensional spaces. 
Another class of schemes, hybrid schemes, couples different models trying to combine the advantages of deterministic and particle solvers, as seen in^^>\cite{DimarcoCaflisch2,Rosin,Rey,Samaey,SamaeyII} and related works.

In the present paper, we focus on PIC methods with the primary goal of designing efficient instantaneous control strategies. These strategies are based on the external magnetic field $\BB_{ext}(t,\xx)$, obtained as a solution of an optimality principle, and aim to minimize the mass hitting the boundaries and/or the thermal energy near the walls. We subsequently examine the numerical outcomes resulting from the implementation of these strategies. While there have been attempts in the literature to study the magnetic confinement of plasma through numerical simulations, most of these efforts have focused on hydrodynamic models^^>\cite{Blum}. The control of microscopic models describing systems of charged particles and based on kinetic equations is a direction that has been largely unexplored in the past. We mention recent references^^>\cite{Einkemmer2024, bartsch2023controlling}, and also refer to^^>\cite{Caflisch21} for the case of collisional kinetic equations.

A fundamental aspect when designing a numerical method to control a plasma simulation is the presence of an external magnetic field, which acts as a control on the system, necessary for the plasma to assume a desired configuration.
To this aim, the general form of the functional we aim to minimize in the following reads 
\begin{equation}\label{eq:control_pb}
			\mathcal{J}(\BB_{ext};f^0;f) : = \int_{0}^T \left( \mathcal{D}(f,\psi)(t) + \frac{\gamma}{2} \int_{\Omega_x} \Vert \BB_{ext}(t,\xx) \Vert^2  d\xx \right)dt,
\end{equation}
where  $\Omega_x \subseteq \RR^{d_x}$ represents the  space domain, $\mathcal{D}(f,\psi)(\cdot)$ is a running cost function, and  $\gamma$ acts as a weight penalizing the magnitude of the control $\Vert \BB_{ext} \Vert$.  For instance, by choosing
\begin{equation}
 \mathcal{D}(f,\psi)(t)=\Big \Vert \int_{\Omega} \psi(\xx,\vv)     \left( f(t,\xx,\vv)-\hat{f}(t,\xx,\vv) \right)   d\xx d\vv \Big \Vert^2,
\end{equation}
where $\Omega= \Omega_x \times \Omega_v$ with $\Omega_v\subseteq \RR^{d_v}$ and $\hat{f}$ a certain target distribution, we are looking to steer the moments of the distribution functions towards a desired state. As an example, the specific case $\psi(\xx,\vv) = 1$ corresponds to force the density of the plasma to be as close as possible to a given configuration. Given the functional \eqref{eq:control_pb}, the general form of the control problem we aim to study is given by 
\begin{equation}\label{eq:control_pb2}
	\begin{split}
		&\min_{\BB_{ext}\in \mathcal{B}^{adm}}\mathcal{J}(\BB_{ext}; f^0;f) \cr
		&\, \textrm{subject to}\\
		&\qquad \partial_t f(t,\xx,\vv) +   \vv \cdot \nabla_\xx f(t,\xx,\vv) + \left(\EE(t,\xx) + \vv \times \BB_{ext}(t,\xx)\right) \cdot \nabla_\vv f(t,\xx,\vv)   = 0,\\ 
		&\qquad-\Delta_\xx \phi(t,\xx) = \frac{\rho(t,\xx)-\rho_0(t,\xx)}{\varepsilon_0}, \qquad \EE(t,\xx) = -\nabla_\xx \phi(t,\xx).
	\end{split}
\end{equation}
The above problem has been analyzed theoretically in^^>\cite{knopf2020optimal,weber2021optimal,Caprino,Kwan}, where the magnetic field has been computed as the superposition of the fields that are generated from different coils. 
A simplified configuration has been also analyzed theoretically in^^>\cite{bartsch2023controlling}, deriving the control following a Lagrangian approach, and by solving the subsequent optimality system. Another similar research direction has been recently explored in^^>\cite{Einkemmer2024}, where the control is performed over the electric field.
However, up to our knowledge, instantaneous control methods of the type described in^^>\cite{Burger,Albi,Fornasier} have never been investigated in the setting of plasma physics described by means of kinetic models which is the direction we aim to explore in this work. In more details we assume, as it will be clarified later, that the external magnetic field may take different values on different parts of the domain, and, for this specific choice, we derive an instantaneous feedback control for the plasma which permits to drive the system towards the desired state. 
Furthermore, the feedback control is derived at both the particle and mesoscopic levels, where it is possible to demonstrate that by carefully designing the time discretization and introducing an appropriate scaling, the discretize-then-optimize (DtO) approach and the optimize-then-discretize (OtD) approach become equivalent. For analogous results in this direction over different models, see, for example,^^>\cite{hager2000runge,sanz2016symplectic,albi2019linear,herty2013implicit}.

The rest of the paper is organized as follows. In Section \ref{sec:formulation} we describe the geometrical setting and we introduce the details of the control problem. In Section \ref{sec:Numerical_methods}, we give a general overview of the type of numerical scheme we choose to approximate our model. In particular, we will focus on  Particle-In-Cell methods together with well suited semi-implicit discretizations. In Section \ref{sec:control}, we discretize the control problem and we derive different control strategies showing the consistency between discretize then optimize, and optimize then discretize approaches. In Section \ref{sec:Numerical_tests}, we propose different numerical examples in which we apply the control to the plasma dynamics.  A concluding Section \ref{sec:conclusion} summarizes the work done and provides some future directions of investigation. In the Appendix \ref{sec:appendix}, 
for validation purposes, we analyze through some tests the proposed numerical method. 

\section{Problem setting}\label{sec:formulation}
We restrict ourselves to a two dimensional setting in phase space, i.e. $d_x=d_v=2$. This situation mimics the evolution of a plasma inside a three dimensional axisymmetric toroidal device. To give a precise definition of our simplified setting, we first consider a two dimensional horizontal section of the three dimensional torus shown on the left of Figure \ref{fig:toro}, which is obtained from the intersection of the $(X,Y)$-plane with the solid. We then focus on a portion of this section, enlightened in red. This section is successively approximated by a rectangle in a new reference frame $(X_\perp,Y_\perp)$ to simplify the description of the computational domain, as shown in the right picture in Figure \ref{fig:toro}. 
For accurate description of the dynamics, directly evaluated over the section of the torus, we refer to^^>\cite{yang2014conservative}.
 \begin{figure}[h!]
\hspace{-2.3cm}
	\includegraphics[width=1.5\linewidth]{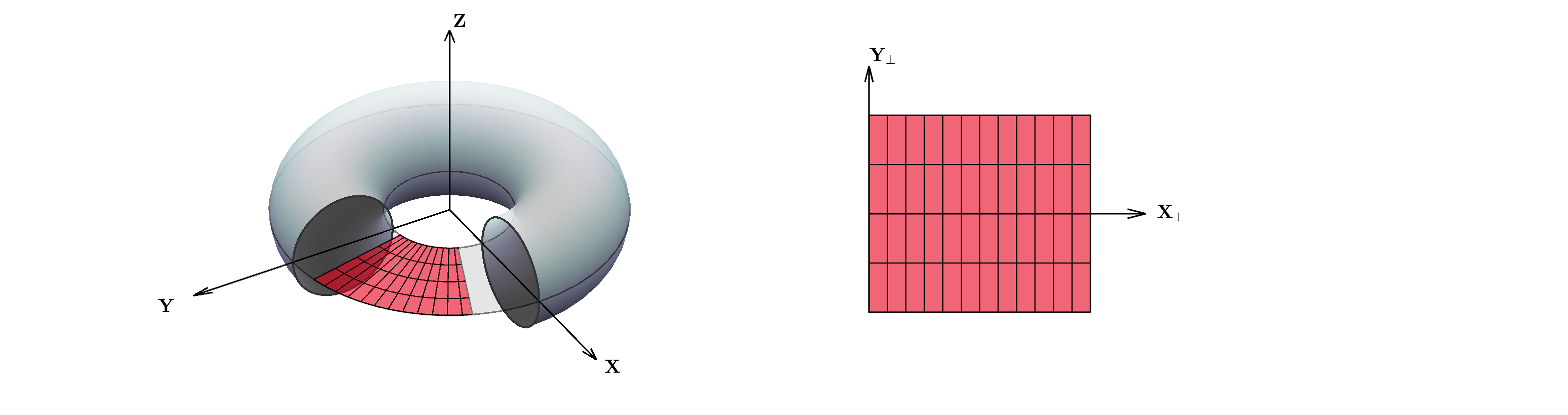}
	\caption{On the left the three dimensional torus, on the right the simplified geometry considered.}
	\label{fig:toro}
\end{figure}

Over this depicted geometry, the external magnetic field is supposed to be such that
\begin{equation}\label{eq:magnetic_field}
	\BB_{ext}(t,\xx) = \left( 0,0, B^{ext}(t,\xx)\right).
\end{equation} 
Hence, the two dimensional Vlasov-Poisson equations with such external magnetic field reads as 
\begin{equation}\label{eq:vlasov_poisson}
	\begin{split}
		&  \partial_t f(t,\xx_\perp,\vv_\perp) +   \vv_\perp \cdot \nabla_{\xx_\perp} f(t,\xx_\perp,\vv_\perp) +\\ &+ \left(  \EE(t,\xx_\perp) +  \vv_\perp \times \BB_{ext}(t,\xx_\perp)\right) \cdot \nabla_{\vv_\perp} f(t,\xx_\perp,\vv_\perp)   = 0,\\
		& \EE(t,\xx_\perp) = -\nabla_{\xx_\perp} \phi(t,\xx_\perp),\qquad -\Delta_{\xx_\perp} \phi(t,\xx_\perp) = \rho(t,\xx_\perp),
	\end{split}
\end{equation}
where $\xx_\perp = (x,y)$, $\vv_\perp= (v_x,v_y)$ denote the coordinates orthogonal to the magnetic field, and where we took $\rho_0=0$ and the scaling parameter $\varepsilon_0=1$ in the Poisson equation. In \eqref{eq:vlasov_poisson}, $f(t,\xx_\perp,\vv_\perp)$ represents an ensemble of charged particles lying on the $x-y$ plane where we impose periodic boundary conditions in $x$ direction while, if not otherwise state, reflective boundary conditions are imposed in $y$ direction, mimicking the wall of the device. The aim then is to control instantaneously the dynamics described by \eqref{eq:vlasov_poisson} through an external magnetic field in order to let the charged particles to assume a desired configuration and to stay as far as possible from the walls. To reach this goal, the idea developed consists in using the external magnetic field $\BB_{ext}(t,\xx_\perp)$ as a control variable able to steer the system. However, in order to cope with a realistic setting, instead of considering the possibility for $\BB_{ext}(t,\xx_\perp)$ to take pointwise values, we introduce a space discretization grid with $N_c$ cells, and we derive a control $B_k^{ext}(t)$ taking constant values in each of the cells $C_k\subset \Omega_x$,  such that $\bigcup_{k=1}^{N} C_k=\Omega_x$, $C_k\cap C_\ell=\emptyset$ for all $k\neq \ell$, and $k, \ell\in\{1,\ldots,N_c\}$, with in the following $\Omega_k = C_k \times \Omega_v$ the state space of the single cell. We stress that the choice of $N_c$ is not due to the numerical setting while instead it has to be considered as the given physical setting of the underlying problem.
In fact, from the practical point of view, one cannot expect that the coils used to generate the external magnetic field would be capable of realizing very complex and pointwise field structures. Of course, as it is natural to expect, relaxing this constraint and allowing a pointwise magnetic field would make the control task easier as detailed in the numerical test section. 

Hence, we reformulate the problem at the continuous level and over a finite time horizon $[0,T]$ as follows
\begin{equation}\label{eq:continuos_pb}
	\min_{B^{ext}\in \mathcal{B}_{adm}}  \sum_{k=1}^{N_c} \mathcal{J}_k(B_k^{ext};f_k,f^0_k),\qquad 
\textrm{s.t.}~\eqref{eq:vlasov_poisson},
\end{equation}
where  $f_k= f_k (t,\xx_\perp,\vv_\perp)$ corresponds to the normalized particle density restricted to a single cell $C_k$: 
\[
f_k (t,\xx_\perp,\vv_\perp) = \frac{f(t,\xx_\perp,\vv_\perp)}{\rho_k(t)},\qquad \rho_k(t) = \int_{\Omega_k}f(t,\xx_\perp,\vv_\perp)\,d\xx_\perp\, d\vv_\perp,
\]
with $\rho_k(t)>0$ the total cell density and with $B^{ext}=(B^{ext}_1,\ldots,B^{ext}_{N_c})$ now representing the vector of $z$  components of $\BB_{ext}(t,\xx_\perp)$ defined in \eqref{eq:magnetic_field} within each cell $C_k$, $\mathcal{B}_{adm}$ the set of admissible controls such that
 $\mathcal{B}_{adm} = \{B_k^{ext} | B_k^{ext}\in[-M,M], M>0,\, k = 1,\ldots, N_c\}$, and where, for each $k = 1,\ldots, N_c$, the cost functional is defined as follows
\begin{equation}\label{eq:J}
	\begin{split}
		\mathcal{J}_k(B^{ext}_k; f_k,f_k^0) =\int_{0}^T\left(    \sum_{ \ell \in \{\textrm{x},\textrm{v}\}}\mathcal{D}_k(f_k,\psi_{\ell})(t) + \frac{\gamma}{2} \Vert B_k^{ext}(t)\Vert^2 \right)\, dt,
\end{split}
\end{equation}
with $\gamma>0$ a penalization parameter of the control, and where the terms of the running cost $\mathcal{D}_k(f_k,\psi_{\textrm{x}})(t)$, $\mathcal{D}_k(f_k,\psi_{\textrm{v}})(t)$ aims at enforcing a specific configuration in the space and velocity distribution read as follows
\begin{equation}\label{eq:function_D_ell}
	\begin{split}
		\mathcal{D}_k(f_k,\psi_\ell) = \frac{\alpha_\ell}2\| m_k(f_k,\psi_\ell)(t) - \hat \psi_{\ell,k}\|^2 +  \frac{\beta_\ell}2\sigma^2_k(f_k,\psi_\ell)(t),	\qquad \ell = \{\textrm{x},\textrm{v}\},
	\end{split}
\end{equation}
with $\psi_{\ell} = \psi_{\ell}(\xx_\perp,\vv_\perp)$ a function of the state variables,$ \hat \psi_{\ell,k} \equiv \hat\psi_{\ell,k}(\xx_\perp,\vv_\perp) $ the target state, $\alpha_\ell, \beta_\ell \geq 0$, and the following moment functions 
\begin{equation}\label{eq:mean_var}
	\begin{split}
    {m}_{k}(f_k,\psi_\ell)(t)&=\int_{\Omega_k} \psi_{\ell}(\xx_\perp,\vv_\perp)f_k(t,\xx_\perp,\vv_\perp) d\xx_\perp d\vv_\perp,\cr
	\sigma^2_{k}(f_k,\psi_\ell)(t) &=  \int_{\Omega_k}\| \psi_{\ell}(\xx_\perp,\vv_\perp) - {m}_{k}(f_k,\psi_\ell)(t)\|^2 f_k(t,\xx_\perp,\vv_\perp) d\xx_\perp d\vv_\perp.
	\end{split}
\end{equation}
Now, the purpose of the first term in \eqref{eq:function_D_ell} is to force the moments of the distribution function in each cell $C_k$, such as mass and momentum, towards a desired values $\hat\psi_{\ell}$. At same time, the second term aims at forcing the distribution function in such a way that the variance, in space and/or in velocity, is as small as possible. In other words, one of the scope of this second term is to avoid particles with large velocity lying on the tail of the distribution function to escape from the control and eventually to reach the walls. 
\begin{remark}
	In the rest of the paper we will use for simplicity the following notation $B(t,\cdot) = B^{ext}(t,\cdot)$, $\BB(t,\cdot) = \BB_{ext}(t,\cdot)$, $\xx = \xx_\perp$ and $\vv = \vv_\perp$.
\end{remark}

\section{Particle-in-cell (PIC) Methods}\label{sec:Numerical_methods}
In this section we focus on the discretization of the Vlasov-Poisson equation \eqref{eq:vlasov_poisson}. To this aim, we will rely on Particle-in-Cell (PIC) methods^^>\cite{chacon2016optimization,hairer2018energy,filbet2017asymptotically,SonnLecture} which will be used to study the motion of $N$ particles approximating the single species plasma density $f(t,\xx,\vv)$. We consequently define the approximated distribution function at time $t^{n+1}$ as 
\begin{equation}\label{eq:approx_density}
	f^N(t^{n+1},\xx,\vv) = \sum_{m=1}^N\omega_m \delta(\xx-\xx_m^{n+1})\delta(\vv-\vv_m^{n+1}),
\end{equation}
where $\delta(\cdot)$ is the Dirac-delta distribution, and $\omega_m$ is a weight related to the mass of each particle. Then, the trajectory of single particle $m=1,\ldots,N$ is computed from the characteristic curves originating from the Vlasov equation
\begin{equation}\label{eq:char_curves} 
	\begin{split}
		& \frac{d\xx_m(t)}{dt} = \vv_m(t), \qquad \xx_m(0) = \xx^0_m,  \\
		& \frac{d\vv_m(t)}{dt} = \vv_m(t) \times \BB(t,\xx_m(t)) + \EE(t,\xx_m), \qquad \vv_m(0) = \vv^0_m.
	\end{split}
\end{equation}
In order to integrate equation \eqref{eq:char_curves}, we first introduce a spatial mesh of size $\Delta x$, with $m_x$ nodes in the $x$ direction, and of size $\Delta y$, with $m_y$ nodes in the $y$ direction. In the following we will denote by $\bf{X}_j$ the nodes of the space grid assuming, for simplicity, to have a single index $j=1,\ldots,M_c$, with $M_c$ the total number of cells. The value of the electric field $\EE(t,\bf{X}_j)$ on the mesh points is computed through the solution of the Poisson equation by means of a finite difference method, 
while the computation of the magnetic field will be detailed later, and it will be the consequence of the minimization of the cost functional \eqref{eq:J}. 

At each time $t$, the approximated density of the particles which is needed to solve the Poisson equation is reconstructed on each spatial cell $\bf{X}_j$ considering the updated particles positions and velocities. The new position is due to a shift at velocity $\vv_m(t)$ in the physical space, while the new velocity being due to the force term acting at the particles position $\xx_m$. The value of the electric field at particle position $\xx_m$ corresponds to the value assumed by the electric field in the cell $j$ to which the particle with position $\xx_m$ belongs. 
The system is initialized by sampling the initial position and velocity of each particle from a given initial distribution: $f(t=0,\xx,\vv)=f_0(\xx,\vv)$. To do so, we first determine the mass of each particle by computing the total mass of the plasma, and by dividing it by the total number of particles $N$ used to approximate the distribution function, i.e.
\begin{equation}\label{eq:mtot}
	\rho_{tot} = \int_{\Omega} f_0(\xx,\vv) d\xx d\vv, \qquad \omega_m = \frac{\rho_{tot}}{N}.
\end{equation}
Then, the number of particles in each cell is given by
\begin{equation}
	N_j = \Big \lfloor \frac{\rho_j}{N} \Big \rceil,
\end{equation} 
where, to avoid any bias, $\lfloor \cdot \rceil$ denotes the stochastic rounding, and $\rho_j$ is given by
\begin{equation}\label{eq:rho_i}
	\rho_j = \int_{\Omega_v} f_0(\xx_j,\vv) d\vv, \quad	\forall j=1,..,M_c.
\end{equation}
Finally, we set uniformly in space the positions of the $N_j$ particles inside each cell $j=1,\ldots,M_c$. Following a similar argument, we associate to each particle a given velocity accordingly to a grid in velocity space with $m_{v_x}\times m_{v_y}$ nodes, used to approximate the initial distribution in velocity. Let observe that the choice of assigning positions and velocities randomly or deterministically at $t=0$ will lead to different convergence rates of the PIC methods in the long time behavior as detailed in the appendix \ref{sec:appendix}.

Now, for what concerns the time discretization, among all the possible methods, we focus on a semi-implicit strategy originally proposed in^^>\cite{filbet2016asymptotically} to discretize the system of equations \eqref{eq:char_curves}. This scheme was developed with the aim of handling strong magnetic field which may be the case in our setting. In fact, the magnetic field used to control the system may in principle take large value during the time evolution of the plasma when trying to force particles not to deviate from a desired direction. Thus, a semi-implicit method guarantees stability for a wider range of time steps and consequently is more suited for our scopes. We then consider a time interval $[0,T]$ divided into $N_t$ intervals of size $h$ and we define $t^n = n h$ and for any $m=1,\ldots,N$, one has the following first order in time scheme
\begin{equation}\label{eq:filbet_scheme}
	\begin{split}
		&\xx_m^{n+1} = \xx_m^n + h \vv_m^{n+1},\\
		&\vv_m^{n+1} = \vv_m^n + h \vv_m^{n+1}\times \BB(t^n,\xx_m^n) +  h \EE(t^n,\xx_m^n),
	\end{split} 
\end{equation}
where $\BB(t^n,\xx_m^n)$ is the external magnetic field, computed as a result of the control problem which will be set and solved in the next Section \eqref{sec:control}.

A second order in time extension of the above first order semi-implicit method can be used as well^^>\cite{filbet2016asymptotically}. This reads
  \begin{equation}\label{eq:second_order_first_stage}
	  	\begin{split}
		  		&\xx_m^{(1)} = \xx_m^n + \frac{h}{2} \vv_m^{(1)},\\
		  		&\vv_m^{(1)} = \vv_m^n + \frac{h}{2} \left( \vv_m^{(1)} \times \BB(t^n,\xx_m^n) + \EE(t^n,\xx_m^n)\right),
		  	\end{split}
	  \end{equation}
where the apex $(1)$ denotes the first stage, while the second stages, for $m=1,\ldots,N$, reads 
  \begin{equation}\label{eq:second_order_second_stage}
		\begin{split}	
				&\xx_m^{(2)} = \xx_m^n + \frac{h}{2} \vv_m^{(2)},\\
				&\vv_m^{(2)} = \vv_m^n + \frac{h}{2} \left( \vv_m^{(2)} \times \BB(t^{n+1},2\xx_m^{(1)}-\xx_m^n) + \EE(t^{n+1},2\xx_m^{(1)}-\xx_m^n)\right) .
			\end{split}
	\end{equation}
	Finally, the numerical solution at time $t^{n+1}$ reads,
	\begin{equation}\label{eq:second_order_scheme}
	\begin{split}
		&\xx_m^{n+1} = \xx_m^{(1)}+\xx_m^{(2)}-\xx_m^n,\\
		&\vv_m^{n+1} = \vv_m^{(1)}+\vv_m^{(2)}-\vv_m^n,\\
	\end{split}
	\end{equation}
	for any $m=1,\ldots,N$.
A complete analysis of this family of semi-implicit PIC numerical methods can be found in^^>\cite{filbet2016asymptotically}.

\section{Derivation of the instantaneous control strategy} \label{sec:control}
As mentioned in Section \ref{sec:formulation}, we consider a space discretization grid with  $N_c$ cells of size $\Delta^c  x \times \Delta^c y$ with $\Delta^c x\gg\Delta x$, and $\Delta^c y\gg\Delta y$, 
and we provide a feedback control $B_k$, $k=1,\ldots,N_c$ taking constant values in each cell $C_k$ at every instant of time $t^n$, and based on a one-step prediction of the dynamics. We stress that we consider the mesh $\Delta^c  x \times \Delta^c y$ to be given, depending on the physics of the problem while the mesh $\Delta  x \times \Delta y$ to depend on the level of resolution aimed in the approximation of the Vlasov equation \eqref{eq:our model}. Once the physical and numerical setting have been defined, the idea used to determine the expression of the magnetic field consists in  splitting the time interval $[0,T]$ into small time intervals of length $h$, and to solve a sequence of optimal control problems of the general form
\begin{equation}\label{eq:min_prob}
	\min_{B\in \mathcal{B}_{adm}}   \sum_{k=1}^{N_c}\mathcal{J}^{h}_{k}(B_k; f_k,f_k^{0})=\sum_{k=1}^{N_c}\int_{t}^{t+h} \left( \sum_{ \ell \in \{\textrm{x},\textrm{v}\}}\mathcal{D}_k(f_k,\psi_{\ell})(\tau) + \frac{\gamma}{2} \Vert B_k(\tau)\Vert^2 \right)\, d\tau.
\end{equation}
One can successively proceed as discussed in the Introduction \ref{sec:intro} to derive the control strategy with the discretize-then-optimize (DtO) approach or with the optimize-then-discretize (OtD) one. In the following we follow the DtO path while in the final part of the section we will discuss the analogies and differences with the OtD method.

\subsection{Discretize then optimize (DtO)}
In this setting  due to the implicit nature of the discretized dynamics given in \eqref{eq:filbet_scheme}, one can first notice that it is not possible to solve the one step optimal control problem analytically. For this reason, we derive an approximated expression of the magnetic field using first a different time discretization for the velocity term of the Vlasov equation \eqref{eq:vlasov}, fully explicit, while keeping implicit the time integration for the particle position and then we plug this result into the semi-implicit scheme \eqref{eq:filbet_scheme}. This procedure provides a sub-optimal feedback control with respect to the original problem \eqref{eq:J}, containing an error of order $h$. Nonetheless, as shown in the numerical test Section, the control design guarantees that the particles remain far from the walls for all time of the simulations.  In order to enforce such behavior we will consider the following choice for $\psi_\ell=\psi_\ell(\xx,\vv)$ in \eqref{eq:min_prob}
\begin{equation}\label{eq:states}
	\psi_{\textrm{v}}(\xx,\vv) = v_y,  \quad		\hat \psi_{\textrm{v},k}(\xx,\vv) = \hat v_{y,k}, \quad		\psi_{\textrm{x}}(\xx,\vv) = y,  \quad	\hat \psi_{\textrm{x},k}(\xx,\vv) = \hat{y}_k.
\end{equation}
Now, based on the particle-based approximation induced by the PIC-scheme introduced in Section \ref{sec:Numerical_methods} we first restrict the optimization problem \eqref{eq:min_prob} to the domain
$\Omega_k$.
Hence, we formulate the discretized version of  \eqref{eq:continuos_pb} in a short time horizon $[t, t + h]$  as follows 
\begin{equation}\label{eq:min_prob_cell}
	\min_{B_k\in [-M,M]} \mathcal{J}^{N,h}_{k}(B_k; f_k^N,f_k^{N,0}),
\end{equation}
subject to a semi-implicit in time discretized Vlasov dynamics, fully explicit for the velocity terms, reading
\begin{equation}\label{eq:explicit_scheme}
	\begin{split}
		&x_i^{n+1} = x_i^n + h v_{x_i}^{n+1},\\
		&y_i^{n+1} = y_i^n + h v_{y_i}^{n+1},\\
		&v_{x_i}^{n+1} = v_{x_i}^n+h v_{y_i}^nB_k + h E_{x_i}^n,\\
		&v_{y_i}^{n+1} = v_{y_i}^n - h v_{x_i}^nB_k + h E_{y_i}^n,
	\end{split}
\end{equation}

Now, by considering also a discretized version of \eqref{eq:min_prob} using the rectangle rule for approximating the integral in time, the functional in \eqref{eq:min_prob_cell} reads as follows
\begin{equation}\label{eq:discr_J_1}
	\mathcal J_k^{N,h}(B_k;f_k^N,f_k^{N,0}) =h\left(\mathcal{D}_k(f_k^N,\psi_{\textrm{x}})(t^{n+1}) +	\mathcal{D}_k(f_k^N,\psi_{\textrm{v}})(t^{n+1}) + \frac{\gamma}{2} \Vert B_k\Vert^2 \right),
\end{equation}
 and where 
 \begin{equation}\label{eq:function_D_ell_discr}
	\begin{split}
		\mathcal{D}_k(f_k^N,\psi_\ell) = \frac{\alpha_\ell}2\| m_k(f_k^N,\psi_\ell) (t^{n+1}) - \hat \psi_{\ell,k}\|^2 +  \frac{\beta_\ell}2\sigma^2_k(f_k^N,\psi_\ell) (t^{n+1}),	\qquad \ell = \{\textrm{x},\textrm{v}\},
	\end{split}
\end{equation}
is equivalent to  \eqref{eq:function_D_ell}, with evaluation over the empirical density $f_k^N(t,x,v)$.
Thus, by setting $\psi_\textrm{x} = y^{n+1}$, $\psi_{\textrm{v}} = v_y^{n+1}$, $\hat{\psi}_{\textrm{x},k} = \hat{y}_k$ and $\hat{\psi}_{\textrm{v},k} = \hat{v}_{y_k}$, target states, according to \eqref{eq:states} and by direct computation over the empirical densities, we can rewrite the functional in \eqref{eq:discr_J_1} as
\begin{equation}\label{eq:discr_J}
		\begin{split}
	\mathcal J_k^{N,h}(B_k)&= \frac{h\alpha_\emph{v}}{2}  \Vert \bar{v}_{y,k}^{n+1} -\hat{v}_{y_k}\Vert^2 + \frac{h\beta_\emph{v}}{2 N_k} \sum_{i\in C_k} \Vert v_{y_i}^{n+1}-\bar{v}_{y,k}^{n} \Vert^2 +\\
	&\qquad+ \frac{h\alpha_\emph{x}}{2}  \Vert \bar{y}_{k}^{n+1} -\hat{y}_k\Vert^2 + \frac{h\beta_\emph{x}}{2 N_k} \sum_{i\in C_k} \Vert y_{i}^{n+1}-\bar{y}_k^{n} \Vert^2+  \frac{h\gamma}{2}  \Vert B^n_k \Vert^2,
		\end{split}
	\end{equation}
  with 
  \begin{equation}\label{eq:mean_quantities}
	\begin{split} 	
		\bar{y}_{k}  = \frac{1}{N_{k}} \sum_{j\in C_k} y_{j} ,\qquad
		\bar{v}_{y,k}  = \frac{1}{N_{k}} \sum_{j\in C_k} v_{y_j},
	\end{split}
\end{equation}
denoting the mean position and velocity over cell $C_k$, $k=1,\ldots,N_c$. 
Now, the following Proposition holds true.
\begin{proposition}
	Assume the parameters to scale as 
\begin{equation}\label{eq:scaling}
	\alpha_\emph{x} \rightarrow \frac{\alpha_\emph{x}}{h}, \qquad \beta_\emph{x} \rightarrow \frac{\beta_\emph{x}}{h}, \qquad \gamma \rightarrow \gamma h,  
\end{equation}
then the feedback control at cell $C_k$ associated to \eqref{eq:discr_J} reads as follows
	\begin{equation}\label{eq:L2_control_space_velocity}
		B_k = \mathbb{P}_{[-M,M]}\left( \frac{\mathcal{R}^{N,n}_{\emph{v},k} + \mathcal{R}^{N,n}_{\emph{x},k} }{\gamma +\mathcal{Q}^{N,n}_{\emph{v},k}  + \mathcal{Q}^{N,n}_{\emph{x} ,k}} \right), 
	\end{equation}
where $\gamma>0$, 
\begin{equation}\label{eq:terms_in_B}
	\begin{split}
	&\mathcal{R}^{N,n}_{\emph{v},k}   = \alpha_\emph{v}  (\bar{v}_{y,k}^n + h \bar{E}_{y,k}^n -\hat{v}_{y_k}) \bar{v}^n_{x,k} + \frac{\beta_\emph{v} }{N_k} \sum_{i=1}^{N_k} \left[ (v_{y_i}^n + h E_{y_i}^n - \bar{v}_{y,k}^n)v_{x_i}^n \right],\\
	&\mathcal{R}^{N,n}_{\emph{x},k}   =   \alpha_\emph{x} (\bar{y}_{k}^n + h(\bar{v}_{y,k}^n+ h\bar{E}_{y,k}^n) -\hat{y}_k) \bar{v}^n_{x,k} + \frac{\beta_\emph{x}}{N_k} \sum_{i=1}^{N_k} \left[ (y_i^n + h(v_{y_i}^n + h E_{y_i}^n)-\bar{y}_{k}^n)v_{x_i}^n \right],\\
	&\mathcal{Q}^{N,n}_{\emph{v},k}  =  h \left( \alpha_\emph{v} (\bar{v}_{x,k}^n)^2 + \frac{\beta_\emph{v}}{N_k} \sum_{i=1}^{N_k} (v_{x_i}^n)^2\right) ,\quad
	\mathcal{Q}^{N,n}_{\emph{x},k} =   h^2 \left( \alpha_\emph{x} (\bar{v}_{x,k}^n)^2 + \frac{\beta_\emph{x}}{N_k} \sum_{i=1}^{N_k} (v_{x_i}^n)^2\right),
	\end{split}
\end{equation}
and with $\mathbb{P}_{[-M,M]}(\cdot)$ denoting the projection over the interval $[-M,M]$. In the limit $h\to 0$ the control at the continuous level reads, 
\begin{equation}\label{eq:L2_control_continuos}
	B_k(t) = \mathbb{P}_{[-M,M]} \left(\frac1\gamma\left(	\mathcal{R}^N_{\emph{v},k} (t) + 	\mathcal{R}^{N}_{\emph{x},k} (t)\right)\right),
\end{equation}
with 
\begin{equation}
	\begin{split}
		&	\mathcal{R}^{n}_{\emph{v},k} (t) = \alpha_\emph{v} (\bar{v}_{y,k}(t) -\hat{v}_{y_k})\bar{v}_{x,k}(t) + \frac{\beta_\emph{v}}{N_k} \sum_{i\in C_k} (v_{y_i}(t) -\bar{v}_{y,k}(t)) v_{x_i}(t),\\
		&	\mathcal{R}^{n}_{\emph{x},k} (t) = \alpha_\emph{x} (\bar{y}_{k}(t)-\hat{y}_{k})\bar{v}_{x,k}(t) + \frac{\beta_\emph{x}}{N_k} \sum_{i\in C_k} (y_{i}(t) -\bar{y}_{k}(t)) v_{x_i}(t).
	\end{split}
\end{equation}
\end{proposition}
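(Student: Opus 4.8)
The plan is to derive the feedback control by carrying out an unconstrained minimization of the explicit-in-velocity discretized functional, then re-inserting the box constraint via projection at the very end. The strategy is: substitute the explicit velocity updates \eqref{eq:explicit_scheme} into the discretized functional \eqref{eq:discr_J}, expand everything as an explicit quadratic function of the scalar $B_k$, compute $\partial_{B_k}\mathcal J_k^{N,h}=0$, solve the resulting linear equation, apply the scaling \eqref{eq:scaling}, and finally take $h\to 0$.

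First I would write out $v_{y_i}^{n+1}=v_{y_i}^n - h v_{x_i}^n B_k + h E_{y_i}^n$ and $y_i^{n+1}=y_i^n + h v_{y_i}^{n+1}=y_i^n + h(v_{y_i}^n + h E_{y_i}^n) - h^2 v_{x_i}^n B_k$, and likewise for the cell means $\bar v_{y,k}^{n+1}$, $\bar y_k^{n+1}$ using \eqref{eq:mean_quantities}. Each of these is affine in $B_k$ with slope proportional to $v_{x_i}^n$ (respectively $\bar v_{x,k}^n$), the coefficient being $-h$ for the velocity terms and $-h^2$ for the position terms — this is precisely what produces the two different powers of $h$ distinguishing $\mathcal Q^{N,n}_{\mathrm v,k}$ from $\mathcal Q^{N,n}_{\mathrm x,k}$. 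Substituting into \eqref{eq:discr_J} gives $\mathcal J_k^{N,h}(B_k)= \tfrac12 A B_k^2 - C B_k + \text{const}$, where $A$ collects the $\gamma$ penalty plus the quadratic-in-slope contributions from each of the four squared-norm terms, and $C$ collects the cross terms between the "offsets" (the $B_k$-independent parts, e.g. $\bar v_{y,k}^n + h\bar E_{y,k}^n - \hat v_{y_k}$) and the slopes. The stationarity condition $A B_k = C$ then yields $B_k = C/A$; matching $A$ against $\gamma + \mathcal Q^{N,n}_{\mathrm v,k}+\mathcal Q^{N,n}_{\mathrm x,k}$ and $C$ against $\mathcal R^{N,n}_{\mathrm v,k}+\mathcal R^{N,n}_{\mathrm x,k}$ after applying the scaling \eqref{eq:scaling} gives \eqref{eq:L2_control_space_velocity}; since the objective is strictly convex in $B_k$ (as $A>0$ whenever $\gamma>0$), the unconstrained minimizer projected onto $[-M,M]$ is the constrained minimizer, which justifies the $\mathbb P_{[-M,M]}$.

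For the continuous limit, I would track the $h$-dependence after the scaling is imposed: the velocity-variance and mean contributions to $\mathcal Q$ carry a factor $h$ (from $\beta_{\mathrm v}$ unscaled times $h$, etc.) while the position ones carry $h^2/h = h$ from the $\alpha_{\mathrm x}/h$, $\beta_{\mathrm x}/h$ scaling against the $h^2$ slope-squared — so $\mathcal Q^{N,n}_{\mathrm v,k}, \mathcal Q^{N,n}_{\mathrm x,k}\to 0$ as $h\to 0$, leaving only $\gamma$ in the denominator. In the numerator, the $O(h)$ correction terms ($h\bar E_{y,k}^n$, $h E_{y_i}^n$, the inner $h v_{y_i}^n$ in $\mathcal R^{N,n}_{\mathrm x,k}$) drop out, giving \eqref{eq:L2_control_continuos} with the stated $\mathcal R^n_{\mathrm v,k}(t)$, $\mathcal R^n_{\mathrm x,k}(t)$. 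One technical point worth stating carefully is that in the $\beta_{\mathrm v}$ term the reference is $\bar v_{y,k}^n$ (the old-time mean) rather than $\bar v_{y,k}^{n+1}$, so that term is linear — not quadratic — in $B_k$ after noting $\sum_{i\in C_k}(v_{x_i}^n - \bar v_{x,k}^n)=0$; this is why only $(v_{x_i}^n)^2$ and $(\bar v_{x,k}^n)^2$, and no cross term $v_{x_i}^n\bar v_{x,k}^n$, survive in $\mathcal Q^{N,n}_{\mathrm v,k}$.

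The main obstacle I anticipate is purely bookkeeping: keeping the four squared-norm terms, their offset/slope decompositions, and the three distinct powers of $h$ ($h^0$, $h^1$, $h^2$ before scaling) correctly aligned so that the final grouping matches the published $\mathcal R$ and $\mathcal Q$ exactly, and in particular verifying that the variance terms contribute to $\mathcal Q$ but their linear offsets vanish by the centering identity $\sum_{i\in C_k}(v_{y_i}^n + hE_{y_i}^n - \bar v_{y,k}^n) \cdot(\text{const})$ is handled correctly — there is no genuine analytic difficulty, only the risk of a sign or factor slipping in the expansion. A secondary subtlety is the interchange of the $h\to 0$ limit with the $\mathbb P_{[-M,M]}$ projection, which is immediate since $\mathbb P_{[-M,M]}$ is continuous (1-Lipschitz), so it suffices to pass to the limit inside the argument.
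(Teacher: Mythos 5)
Your proposal is correct and follows essentially the same route as the paper: substitute the explicit velocity update \eqref{eq:explicit_scheme} into the discretized functional \eqref{eq:discr_J}, differentiate the resulting quadratic in $B_k$, apply the scaling \eqref{eq:scaling}, and pass to the limit $h\to 0$. The only formal difference is in the treatment of the box constraint: the paper introduces an augmented Lagrangian with a multiplier $\lambda_k$ and runs a case analysis on $\lambda_k>0$ with $B_k=\pm M$ versus $\lambda_k=0$ with $|B_k|<M$, whereas you invoke strict convexity of the one-dimensional quadratic (guaranteed by $\gamma>0$) to identify the constrained minimizer with the projection of the unconstrained stationary point; the two arguments are equivalent, and yours is arguably the more direct justification of the appearance of $\mathbb{P}_{[-M,M]}$. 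One sentence of yours is internally inconsistent: the $\beta_{\mathrm v}$ term is quadratic, not linear, in $B_k$ --- otherwise it could not contribute $\frac{\beta_{\mathrm v}}{N_k}\sum_i (v_{x_i}^n)^2$ to $\mathcal{Q}^{N,n}_{\mathrm v,k}$, as you yourself correctly assert --- and the centering identity $\sum_{i\in C_k}(v_{x_i}^n-\bar v_{x,k}^n)=0$ is not actually needed, precisely because the reference in that term is the old-time mean $\bar v_{y,k}^n$ rather than $\bar v_{y,k}^{n+1}$, so no cross terms $v_{x_i}^n\,\bar v_{x,k}^n$ ever arise. This slip does not affect the computation or the final formula.
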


\begin{proof}
	We introduce the augmented Lagrangian
	\begin{equation}\label{eq:lagr_L2}
		\mathcal{L}(B_k,\lambda_k) = \mathcal{J}_k^{N,h}(B_k)+\lambda_k(|B_k|-M),
	\end{equation}
	with $\lambda_k(t)$ the Lagrangian multiplier.  
	Then, for each $k=1,\ldots,N_c$, we solve the optimality system 
\begin{equation}\label{eq:Lagr_system}
	\begin{cases}
		\partial_{B_k}\mathcal{L}(B_k,\lambda_k)=0\\
		\left(\partial_{\lambda_k}\mathcal{L}(B_k,\lambda_k) = 0\ \text{ if }\lambda_{k}> 0\right) \text{ or } \left( \partial_{\lambda_k}\mathcal{L}(B_k,\lambda_k) <0 \text{ if } \lambda_{k} = 0\right).
	\end{cases}
	\end{equation} 
	From the first equation in \eqref{eq:Lagr_system} we get 
	\begin{equation}\label{eq:partial_Bk}
		\begin{split}
			\partial_{B_k} \mathcal{L} =&  h \alpha_\emph{v} \left( \bar{v}_{y,k}^n -h  \bar{v}_{x,k}^n B_k + h  \bar{E}_{y,k}^n -\hat{v}_{y_k} \right) \left( -h \bar{v}_{x,k}^n\right) + \\
			& + \frac{h\beta_\emph{v}}{N_k}\sum_{i\in C_k}  \left\lbrace  \left(  v_{y_i}^n -h  v_{x_i}^n B_k + h  E_{y_i}^n  -\bar{v}_{y,k}^n \right) \left( - hv_{x_i}^n\right) \right\rbrace  +\\&+h\alpha_\emph{x} \left(\bar{y}_{k}^n+ h(\bar{v}_{y,k}^n -h  \bar{v}_{x,k}^n B_k + h  \bar{E}_{y,k}^n) -\hat{y}_{k} \right) \left( - h^2\bar{v}_{x,k}^n\right) + \\
			& + \frac{h\beta_\emph{x}}{N_k}\sum_{i\in C_k}  \left\lbrace  \left( y_i^n+h( v_{y_i}^n -h  v_{x_i}^n B_k + h  E_{y_i}^n) -\bar{y}_{k}^n   \right) \left( -h^2 v_{x_i}^n\right) \right\rbrace   + \\& +h\gamma B_k  +\lambda_k\sign(B_k) = 0.
		\end{split}
	\end{equation} 
	Then, if $\lambda_k > 0$, from  $\partial_{\lambda_k} \mathcal{L}(B_k,\lambda_k)=0$, we get $\vert B_k \vert=M$. Consequently, by considering the two cases $B_k=M$ and $B_k=-M$ separately, and by assuming the parameters to scale as in \eqref{eq:scaling}, we get by setting $B_k=-M$ in \eqref{eq:partial_Bk}, and since $\lambda_k>0$
	\begin{equation}
		\begin{split}
			\frac{\lambda_k}{h^2} =  &\alpha_\emph{v} \left( \bar{v}_{y,k}^n + h  \bar{v}_{x,k}^n M + h  \bar{E}_{y,k}^n -\hat{v}_{y_k} \right) \left( -\bar{v}_{x,k}^n\right) + \\
			& + \frac{\beta_\emph{v}}{N_k}\sum_{i\in C_k}  \left\lbrace  \left(  v_{y_i}^n +h  v_{x_i}^n M + h  E_{y_i}^n   -\bar{v}_{y,k}^n \right) \left( -v_{x_i}^n\right) \right\rbrace  +\\&+\alpha_\emph{x} \left( \bar{y}_{k}^n + h(\bar{v}_{y,k}^n +h  \bar{v}_{x,k}^n M + h  \bar{E}_{y,k}^n) -\hat{y}_{k} \right) \left( -\bar{v}_{x,k}^n\right) + \\
			& + \frac{\beta_\emph{x}}{N_k}\sum_{i\in C_k}  \left\lbrace  \left( y_i^n+h( v_{y_i}^n +h  v_{x_i}^n M + h  E_{y_i}^n)-\bar{y}_{ k}^n   \right) \left( - v_{x_i}^n \right) \right\rbrace   - \gamma M > 0,
		\end{split}
	\end{equation}  
	from which we get 
	\begin{equation}
\frac{\mathcal{R}^{n}_{\textrm{v},k} + \mathcal{R}^{n}_{\textrm{x},k}  }{\gamma +\mathcal{Q}^{n}_{\textrm{v},k}  + \mathcal{Q}^{n}_{\textrm{x},k}} < -M,
	\end{equation} 
	with 
	$\mathcal{R}^{n}_{\textrm{v},k} $, $\mathcal{R}^{n}_{\textrm{x},k} $, $\mathcal{Q}^{n}_{\textrm{v},k} $, $\mathcal{Q}^{n}_{\textrm{v},k} $ defined as in \eqref{eq:terms_in_B}. 
	If conversely $\lambda_k > 0$ and $B_k=M$, by scaling the parameters as in \eqref{eq:scaling}, following a similar argument we have
	\begin{equation}
\frac{\mathcal{R}^{n}_{\textrm{v},k} + \mathcal{R}^{n}_{\textrm{x},k}  }{\gamma +\mathcal{Q}^{n}_{\textrm{v},k}  + \mathcal{Q}^{n}_{\textrm{x},k}}> M.
	\end{equation}
	If finally $\lambda_k=0$, and by assuming the parameters to scale as in \eqref{eq:scaling}, from the first equation in \eqref{eq:Lagr_system} we get 
	\begin{equation}\label{eq:control_L2} 
		B_k =\frac{\mathcal{R}^{n}_{\textrm{v},k} + \mathcal{R}^{n}_{\textrm{x},k}  }{\gamma +\mathcal{Q}^{n}_{\textrm{v},k}  + \mathcal{Q}^{n}_{\textrm{x},k}},
	\end{equation}
	and from the second equation in \eqref{eq:Lagr_system}, i.e. from $\partial_{\lambda_k} \mathcal{L}(B,\lambda)<0$, we get 
	\begin{equation}
		-M< B_k < M.
	\end{equation}
	All in all we get $B_k$ defined as in \eqref{eq:L2_control_space_velocity}. Finally, in the limit $h\to 0$ we recover equation \eqref{eq:L2_control_continuos}.  
\end{proof}
The expression obtained in \eqref{eq:L2_control_space_velocity} is the suboptimal control we will plug into the second order discretized Vlasov equation \eqref{eq:second_order_first_stage}-\eqref{eq:second_order_second_stage}-\eqref{eq:second_order_scheme}  to steer particles into the desired position.
\begin{remark}
		The feedback control obtained in \eqref{eq:L2_control_space_velocity} can be equivalently derived as an instantaneous control of the following functional on the interval $[t,t+h]$, without rescaling the parameters as in \eqref{eq:scaling},
	 \begin{equation}\label{eq:J_alt}
	 	\begin{split}
	 	\mathcal{J}^{N,h}_k(B_k) &= 	\frac{\alpha_\emph{v}}{2}  \Vert \bar{v}_{y,k}(t+h) -\hat{v}_{y_k}\Vert^2 + 
	 		 \frac{\beta_\emph{v}}{2 N_k} \sum_{i\in C_k} \Vert v_{y_i}(t+h)-\bar{v}_{y,k}(t) \Vert^2+\cr
	 		& +\int_t^{t+h} \left(\frac{\alpha_\emph{x}}{2}  \Vert \bar{v}_{y,k}(\tau)      - \hat{\mathcal{V}}_k (\tau) \Vert^2 + \frac{\beta_\emph{x}}{2 N_k} \sum_{i\in C_k} \Vert v_{y_i}(\tau) - \bar{\mathcal{V}}_{k,i} (\tau) \Vert^2\right)\,d\tau\cr
	 		&\qquad+\frac{\gamma}{2}\int_t^{t+h} \Vert B_k(\tau) \Vert^2\,d\tau,
	 	\end{split}
	 \end{equation}	
where we introduced the term
 \[
 \hat{\mathcal{V}}_{k} (\tau) =  \frac{\hat y_k -\bar y_k(\tau)}{h},\quad  \bar{\mathcal{V}}_{k,i} (\tau) = \frac{\bar y_k(\tau) -y_i(\tau)}{h}.
 \]
Hence, we consider the following discretization
	 \begin{equation}\label{eq:disc_J_alt}
	\begin{split}
		\mathcal{J}^{N,h}_k(B_k) & = 	\frac{\alpha_\emph{v}}{2}  \Vert \bar{v}_{y,k}^{n+1}-\hat{v}_{y_k}\Vert^2 + 
		\frac{\beta_\emph{v}}{2 N_k} \sum_{i\in C_k} \Vert v_{y_i}^{n+1}-\bar{v}_{y,k}^n \Vert^2+\cr
		& +\frac{h\alpha_\emph{x}}{2}  \Vert \bar{v}_{y,k}^{n+1}     - \hat{\mathcal{V}}_k ^n \Vert^2 + \frac{h\beta_\emph{x}}{2 N_k} \sum_{i\in C_k} \Vert v_{y_i}^{n+1} - \bar{\mathcal{V}}_{k,i} ^n \Vert^2+\frac{h\gamma}{2} \Vert B_k \Vert^2,
	\end{split}
\end{equation}	
and we observe that the following reformulation is viable for the third term
\[
\begin{split}
&\frac{h\alpha_\emph{x}}{2} \Vert \bar{v}_{y,k}^{n+1}     - \hat{\mathcal{V}}_k ^n \Vert^2 
=\frac{h\alpha_\emph{x}}{2} \left \Vert \frac{\bar{y}_k^{n+1}-\bar{y}_k^{n}}{h}-      \frac{\hat y^n_k - \bar y^n_k}{h}\right\Vert^2
 =\frac{\alpha_\emph{x}}{2h}\left \Vert \bar{y}_k^{n+1}-\tilde y^n_k \right\Vert^2.
 \end{split}
\]
The same argument holds for the forth term of \eqref{eq:disc_J_alt}.
\end{remark}

\subsection{Optimize then discretize  (OtD)}
In the previous section we proceeded by finding instantaneous control from the optimization of the discretized dynamics based on the PIC method. In what follows, we instead derive an instantaneous control directly from the continuous problem \eqref{eq:continuos_pb}, showing successively consistency with the discretized dynamics by selecting an appropriate time discretization.

A first step toward the solution of the PDE-constrained optimal control problem \eqref{eq:continuos_pb} is the derivation of first order necessary conditions, as done for example in ^^>\cite{knopf2020optimal,bartsch2023controlling}.  
Here we consider the evolution of the Vlasov-Poisson system \eqref{eq:vlasov_poisson} restricted to $\Omega_k$ in the short time frame $[t,t+h]$, and since we are interested in a instantaneous control we neglect the contribution of the Poisson equation.
Hence we write the associated Lagrangian as follows
\begin{equation}\label{eq:lagrangian_cont}
	\begin{split}
		\mathcal{L}(B_k,f_k,q_k) &=\int_{t}^{t+h}\left(\sum_{\ell\in\left\{\textrm{x,v}\right\}}\mathcal{D}_k(f_k,\psi_{\ell})(\tau) + \frac{\gamma}{2} \Vert B_k(\tau)\Vert^2 \right)\, d\tau
		\cr
		&	+\int_{t}^{t+h}\int_{\Omega_k}q_k\left(\partial_t f_k + \vv\cdot \nabla_{x} f_k + (\EE(t,\xx)+\vv\times \BB_k)\cdot\nabla_{v}f_k\right)  d\xx d\vv\, d\tau.
	\end{split}
\end{equation}
where $q_k=q_k(t,\xx,\vv)$ denotes the Lagrangian multiplier.

Necessary conditions associated to \eqref{eq:continuos_pb} can be derived as first variations of \eqref{eq:lagrangian_cont}, similarly to^^>\cite{knopf2020optimal}.
Hence, we retrieve the following adjoint equation
\begin{align} \label{eq:optimality_cont}	
	&\partial_tq_k +  \vv \cdot \nabla_{x} q_k+  (\EE(t,\xx)+ \vv \times \BB_k(t))\cdot \nabla_{v} q_k  = \sum_{\ell}\ \mathcal{S}_k(t,\psi_\ell),
\end{align}
which  is solved backward  in time with terminal condition $q_k(t+h,\xx,\vv)=0$, and where
the source terms $\mathcal{S}_k(t,\psi_\ell)$ are the variations of \eqref{eq:function_D_ell} with respect to the density $f_k(t,\cdot)$. 
\begin{align}\label{eq:S_ell}
	\mathcal{S}_k(t,\psi_\ell)& =  \alpha_\ell \psi_\ell(x,v)\left(m_k[\psi_\ell](t) - \hat \psi_{\ell,k}\right)+\frac{\beta_\ell}{2}\|m_k[\psi_\ell](t)-\psi_\ell(\xx,\vv)\|^2.
\end{align}

The optimality condition associated to the variation \eqref{eq:lagrangian_cont} with respect to the magnetic field writes
\begin{align}\label{eq:control_cont}
	B_k(t) = \mathbb{P}_{[-M,M]}\left(\frac{1}{\gamma}\int_{\Omega_k}\vv\times \nabla_{v}q_k(t,\xx,\vv) f_k(t,\xx,\vv)\, d\xx\,d\vv\right),
\end{align}
with $\mathbb{P}_{[-M,M]} $ the projection operator previously defined.

In order to retrieve consistency with the instantaneous control in \eqref{eq:L2_control_continuos} we need to discretize the optimality system in time in a coherent way with respect to the forward dynamics of the particle scheme \eqref{eq:explicit_scheme}, since in general the two approaches do not commute^^>\cite{sanz2016symplectic,hager2000runge}.
Then we introduce the time splitting scheme for the adjoint equation as follows
\begin{equation}\label{eq:adjoint_disc}
\begin{aligned}
	q_k^*&= 
	q_k^{n+1}+  h( \EE^n(\xx)+v\times \BB_k^n)\cdot\nabla_{v} q_k^{n+1}  - h\sum_{\ell}\ \mathcal{S}_k(t^{n+1},\psi_\ell)\\
	q_k^n &= q_k^* +h\vv\cdot\nabla_x q_k^*
\end{aligned}
\end{equation}
with terminal condition $q_k^{n+1}(\xx,\vv)= 0$,
and 
\begin{align}\label{eq:control_cont_disc}
	B_k(t^n) = \mathbb{P}_{[-M,M]}\left(\frac{1}{\gamma}\int_{\Omega_k}\vv\times \nabla_{v}q_k^n(\xx,\vv) f^n_k(\xx,\vv)\, d\xx,d\vv\right).
\end{align}
Since the terminal conditons is zero, we have that the first step of the splitting scheme \eqref{eq:adjoint_disc} corresponds to
\begin{equation*}
		q_k^*= -h\sum_{\ell}\ \mathcal{S}_k(t^{n+1},\psi_\ell) = -h\left(\mathcal{S}_k(t^{n+1},v_y)+\mathcal{S}_k(t^{n+1},y)\right).
\end{equation*}
Then, the term in the integral of \eqref{eq:control_cont_disc} can be rewritten as 
\[
\begin{split}
	 \vv\times \nabla_{v}q_k^n&=  \vv\times \nabla_{v}\left(q_k^*+h\vv\cdot \nabla_x q_k^*\right) \cr
	 &= -h\sum_{\ell}\vv\times \nabla_{v}\left(\mathcal{S}_k(t^{n+1},\psi_\ell)+h\vv\cdot \nabla_x \mathcal{S}_k(t^{n+1},\psi_\ell)\right),
\end{split}
\]
and considering the choice for $\psi_\ell(x,v)$ designed in \eqref{eq:states}, we have
\[
\begin{split}
	\vv\times \nabla_{v}q_k^n& = hv_x\partial_{v_y} \mathcal{S}_k(t^{n+1},v_y) +h^2v_x\partial_{y} \mathcal{S}_k(t^{n+1},y).
\end{split}
\]
where we can compute directly from \eqref{eq:S_ell} the partial derivatives
\begin{align*}
\partial_{v_y} \mathcal{S}_k(t^{n+1},v_y)&= \alpha_\textrm{v} \left(m_k(f_k,v_y)(t^{n+1}) - \hat v_{y,k}\right)+\beta_\textrm{v}\left(v_y-m_k(f_k,v_y)(t^{n+1})\right),\\
\partial_{y} \mathcal{S}_k(t^{n+1},y)&= \alpha_\textrm{x} \left(m_k(f_k,y)(t^{n+1}) -\hat y_{k}\right)+\beta_\textrm{x}\left(y-m_k(f_k,y)(t^{n+1})\right).
\end{align*}
Finally introducing the scaling \eqref{eq:scaling} over the paramters $\alpha_\textrm{x},\beta_\textrm{v}$ and $\gamma$ and letting $h\to 0$ we can retrieve the instantaneous control from \eqref{eq:control_cont_disc} as follows
\begin{align}\label{eq:control_cont_disc_2}
	B_k(t) = \mathbb{P}_{[-M,M]}\left(\int_{\Omega_k}\frac{1}{\gamma}\left(v_x	\mathcal{R}_{\textrm{v},k}(t,v_y) + v_x	\mathcal{R}_{\textrm{x},k}(t,y)\right)f_k(t,\xx,\vv)\, d\xx\,d\vv\right).
\end{align}
where $\mathcal{R}_\ell$ are defined as
\begin{align*}
	\mathcal{R}_{\textrm{v},k}(t,v_y)&:=\partial_{v_y} \mathcal{S}_k(t,v_y),\qquad	\mathcal{R}_{\textrm{x},k}(t,y):=\partial_{y} \mathcal{S}_k(t,y).
\end{align*}
which corresponds to the continuous version of the particle feedback control in \eqref{eq:L2_control_continuos}.

\begin{remark}
We remark that in the Lagrangian \eqref{eq:lagrangian_cont} we neglected the contribution of the Poisson equation, which depends itself on the density $f_k(t,\xx,\vv)$, and which would provide in the adjoint equation an additional source term as follows
\begin{align} \label{eq:optimality_cont2}	
	&\partial_tq_k +  \vv \cdot \nabla_{x} q_k+  (\EE(t,\xx)+ \vv \times \BB_k(t))\cdot \nabla_{v} q_k  = \sum_{\ell}\ \mathcal{S}_k(t,\psi_\ell) + \mathcal{P}(f_k,q_k)(t,\xx),
\end{align}
where $ \mathcal{P}(f_k,q_k)(t,\xx)$ in the two dimensional setting corresponds to the following non local term
\begin{align}
	\mathcal{P}(f_k,q_k)(t,\xx)& =- \mu\int_{\Omega_k} \frac{\xx-\yy}{\|\xx-\yy\|^2}\cdot \nabla_{v} f_k(t,\yy,{\bf {w}})q_k(t,\yy,{\bf {w}}) d\yy d{\bf {w}},
\end{align}
with $\mu>0$ a normalization constant. We refer to^^>\cite{knopf2020optimal} for detail computation over the complete Lagrangian. However, this additional term does not affect the derivation of the instantaneous control \eqref{eq:control_cont_disc_2} since its contribution vanishes at the level of the discretization used in \eqref{eq:adjoint_disc} due to the zero terminal condition.
\end{remark}

\section{Numerical experiments}\label{sec:Numerical_tests}
In this section we present several numerical results to test the behavior of the Vlasov system under the feedback control \eqref{eq:L2_control_space_velocity} induced by the external magnetic field. We simulate the dynamics by means of the second order PIC method in \eqref{eq:second_order_first_stage}-\eqref{eq:second_order_second_stage}-\eqref{eq:second_order_scheme}, and we study the capability of the control to achieve the desired confinement in two classical settings. The first problem concerns the simulation of a regular flow, while the second problem regards a well-known type of instability arising in fluids. We expect the first case to be easier to confine the particles than in the second case due to higher mixing. For both situations, we show the case with and without control, where this latter is constructed in such a way to limit the accumulation of the mass close to the boundaries of the domain and correspondingly to avoid the thermal energy to grow excessively. In Appendix \ref{sec:appendix} some convergence numerical tests for the Vlasov-Poisson system are shown for code validation purposes.

\subsection{Two stream plasma test case}
We consider a two dimensional domain $(x,y)\in [0,40]\times[-1.5,1.5]$ and we impose periodic boundary conditions in $x$ and reflective boundary conditions in $y$. 
\begin{figure}[h!]
	\centering
	\includegraphics[width=0.328\linewidth]{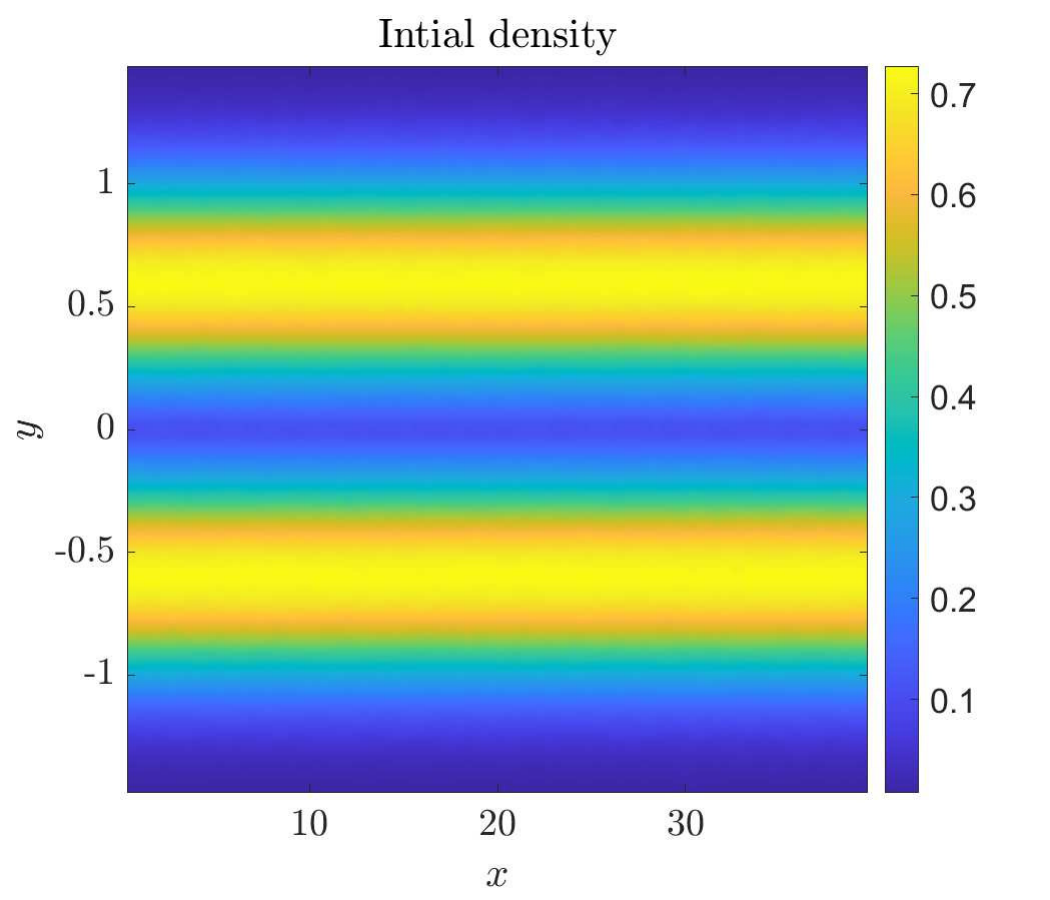}
	\includegraphics[width=0.328\linewidth]{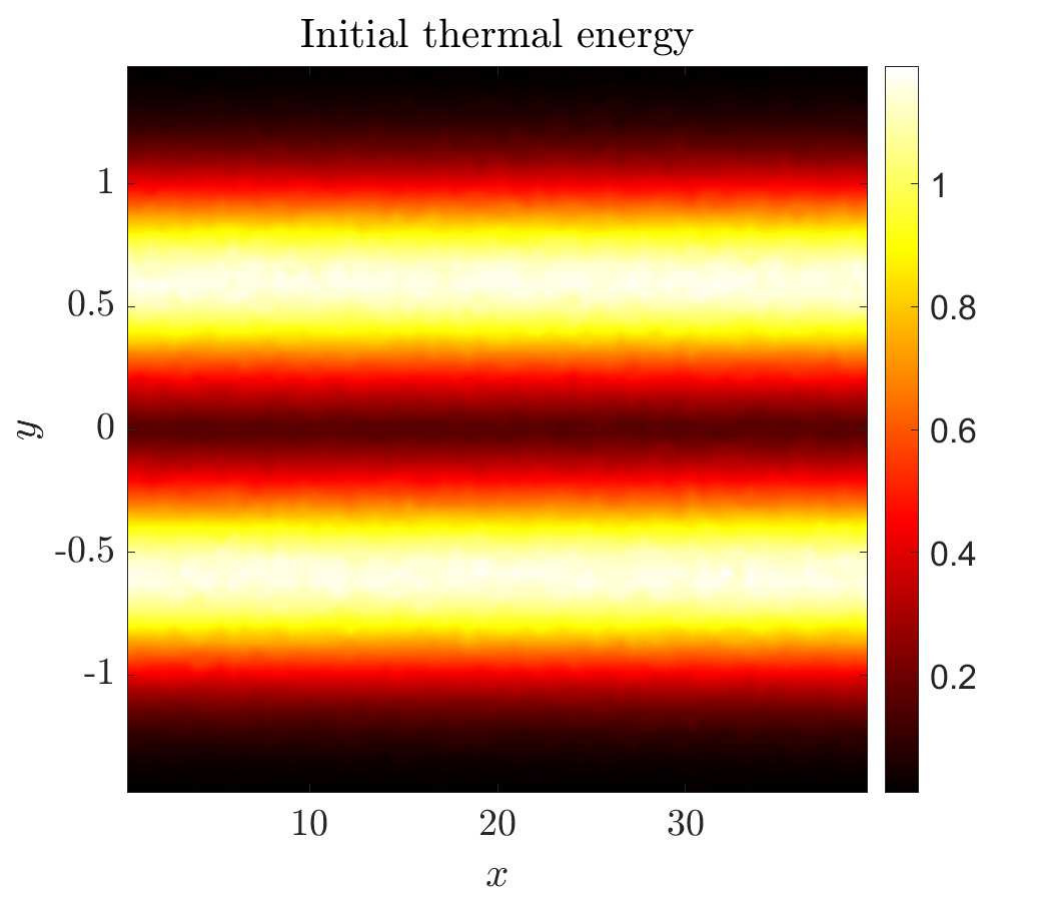}\\ 
	\includegraphics[width=0.328\linewidth]{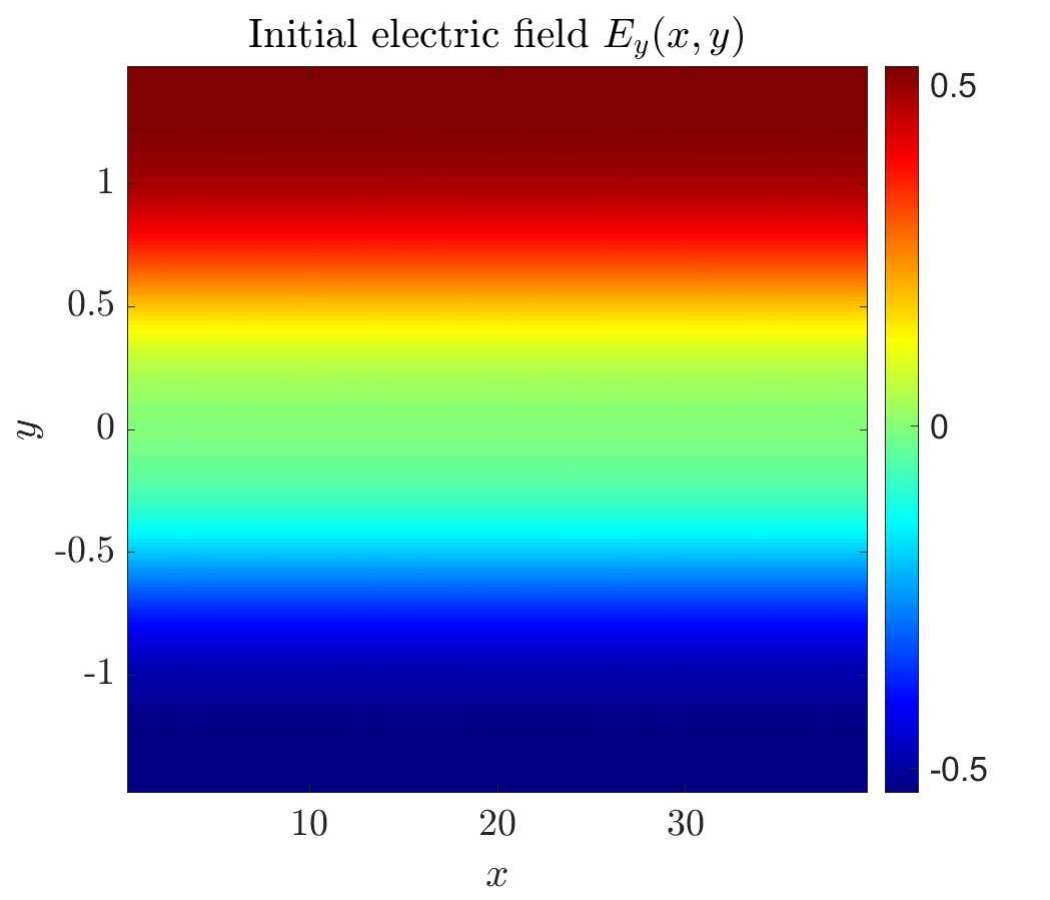}
	\includegraphics[width=0.328\linewidth]{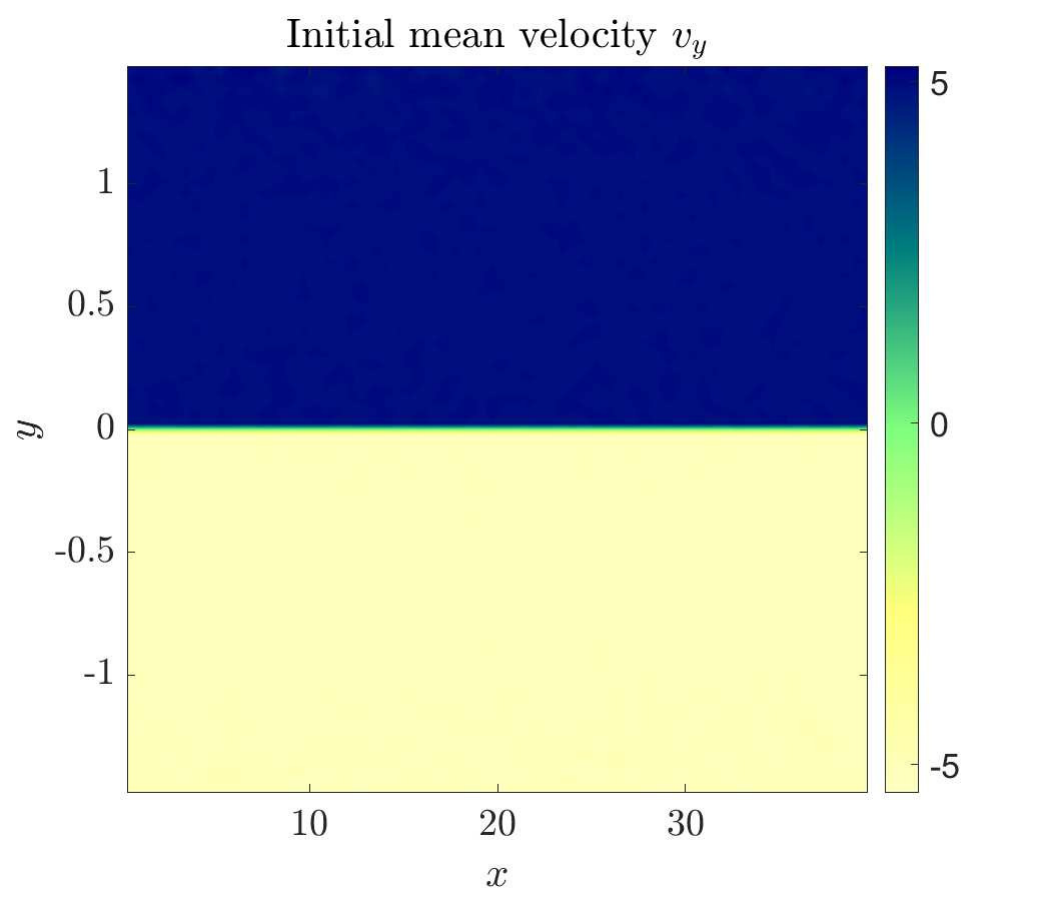}
	\caption{Two stream plasma test. Initial density top left. Initial thermal energy (top right). Initial electric field (bottom left) and velocity (bottom right) in the $y$-direction.}
	\label{fig:ic_ex}
\end{figure}
As initial density we take the sum of two Gaussian distributions in space 
\begin{equation}\label{eq:rh0_example} 
\begin{split}
	\rho_0^\pm(\xx) =\frac{1}{\sqrt{2\pi\sigma}} \exp\left(-\frac{(y\pm c_y)^2}{2\sigma^2}\right),
\end{split}
\end{equation}
with $c_y = 1$ and $\sigma = 0.3$ and where the apex $\pm$ indicates whether the particles are lying on the positive or the negative part of the domain in the $y$-direction. We assume a plasma in thermodynamic equilibrium at $t=0$ and distributed accordingly to 
\begin{equation}\label{eq:f0} 
	f_0(\xx,\vv) = f_0^+(t,\xx,\vv) \chi(y\geq0)+ f_0^-(t,\xx,\vv)\chi(y<0),
\end{equation}
with
\begin{equation}\label{eq:fo_p_m}
	f_0^\pm(\xx,\vv) =\frac{\rho_0^\pm(\xx)}{ 2\pi T_0(\xx)}\exp\left(-\frac{(v_y\pm u_y)^2 + v_x^2}{2T_0(\xx)}\right),
\end{equation}
where the velocity in the $y-$direction is $u_y = \pm5$, denoting that the particles are in average pointing to the walls of the device at initial time. The initial temperature is 
\begin{equation}\label{eq:temp_sigma}
	\begin{split}
	T_0(\xx) =  1.5+&0.1\sin\left(\frac{2\pi(y-0.3)}{1.2}\right) \chi(y>=0.3)+\\&-0.1\sin\left(\frac{2\pi(y-0.3)}{1.2}\right)\chi(y<-0.3).
	\end{split} 
\end{equation}
We sample $N=10^7$ particles from the initial distribution with position $(x_i^0,y_i^0)$, and velocity $(v_{x_i}^0,v_{y_i}^0)$ accordingly to \eqref{eq:fo_p_m}. In Figure \ref{fig:ic_ex} the initial configuration is shown as well as the initial self-induced electric field. 

We now start by assuming to have no control on the system, we instead set a constant external magnetic field $B(t,\xx)=1$ acting on the particles. We then let the dynamics to evolve up to time $T=100$, choosing $h=0.001$ as time step. We also take $m_x\times m_y$, with $m_x=m_y = 64$ cells for the space discretization, which are needed to reconstruct the density field from the particle distribution and to compute the electric field. 
\begin{figure}[h!]
	\centering
	\includegraphics[width=0.328\linewidth]{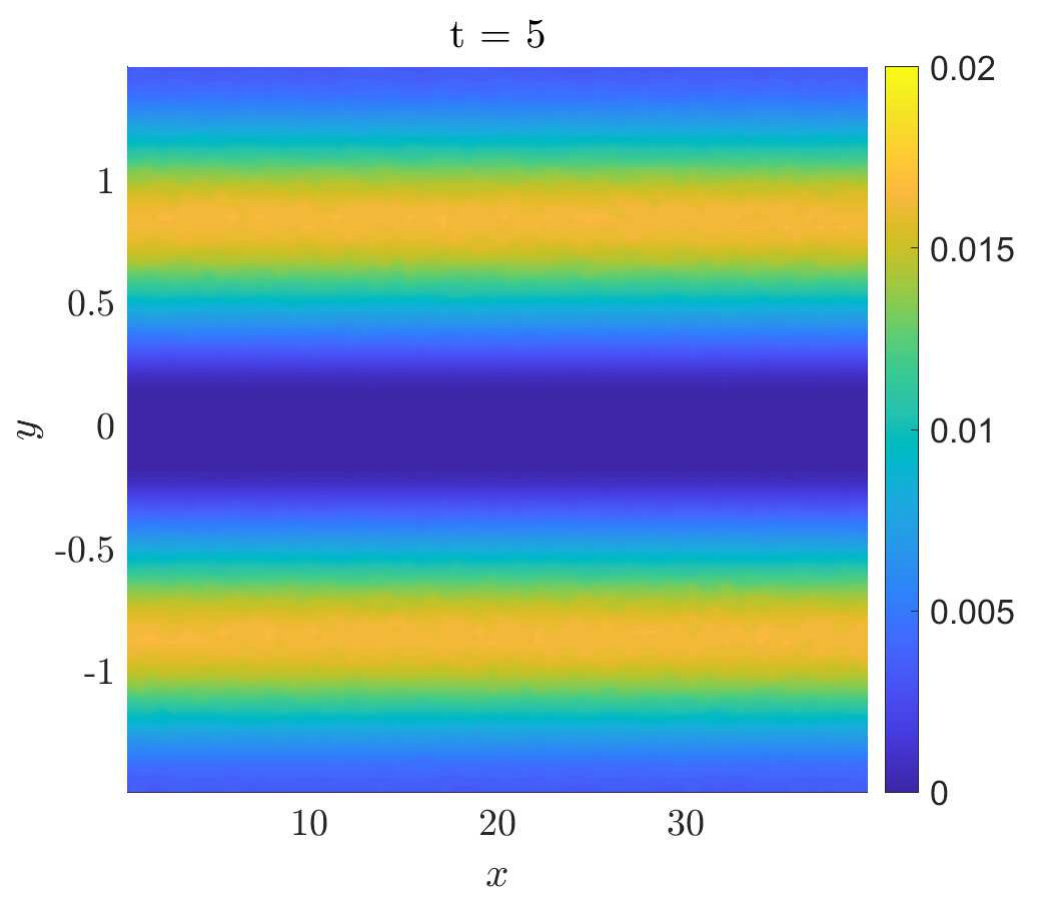}
	\includegraphics[width=0.328\linewidth]{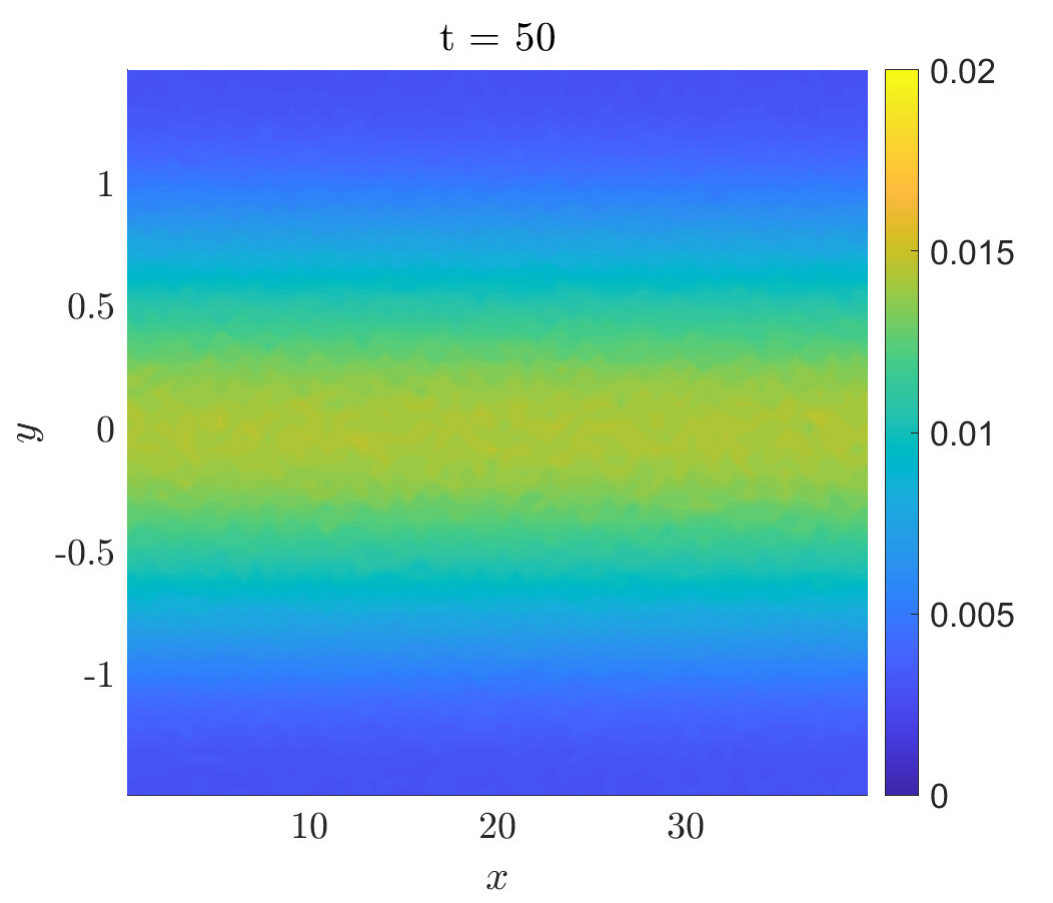}
	\includegraphics[width=0.328\linewidth]{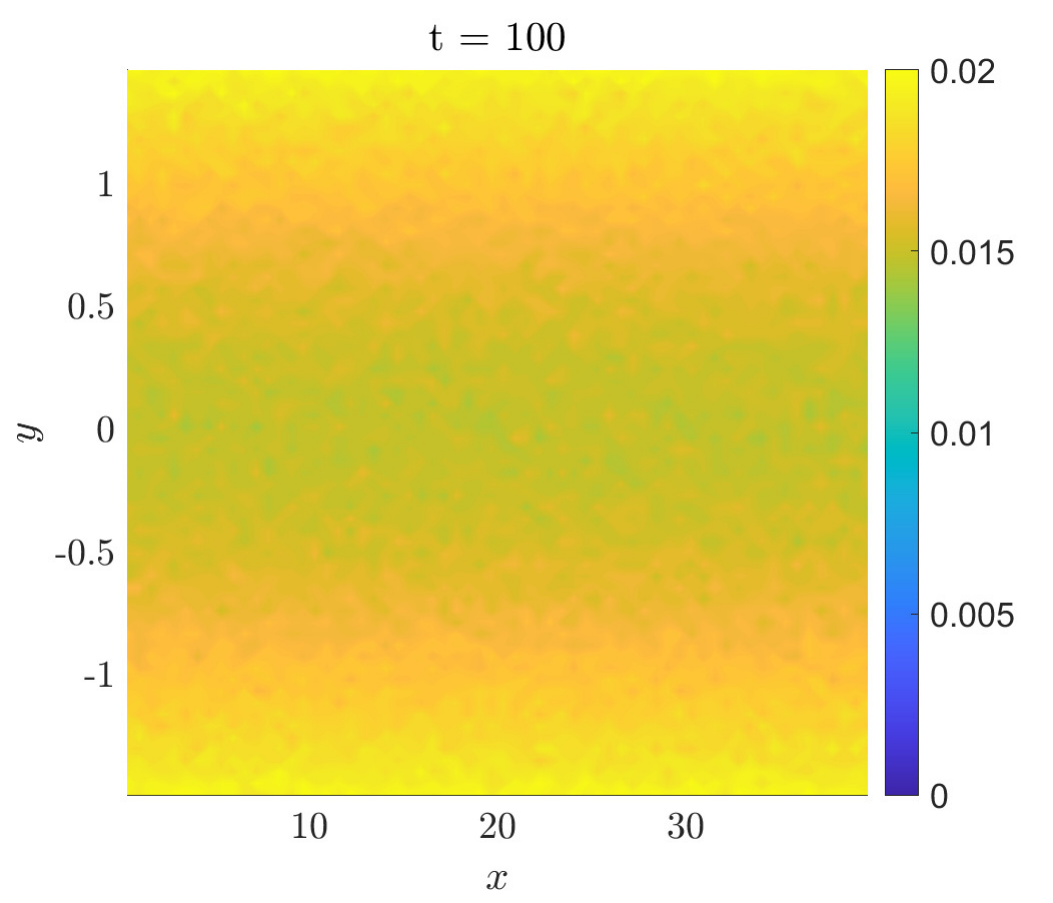}\\
	\includegraphics[width=0.328\linewidth]{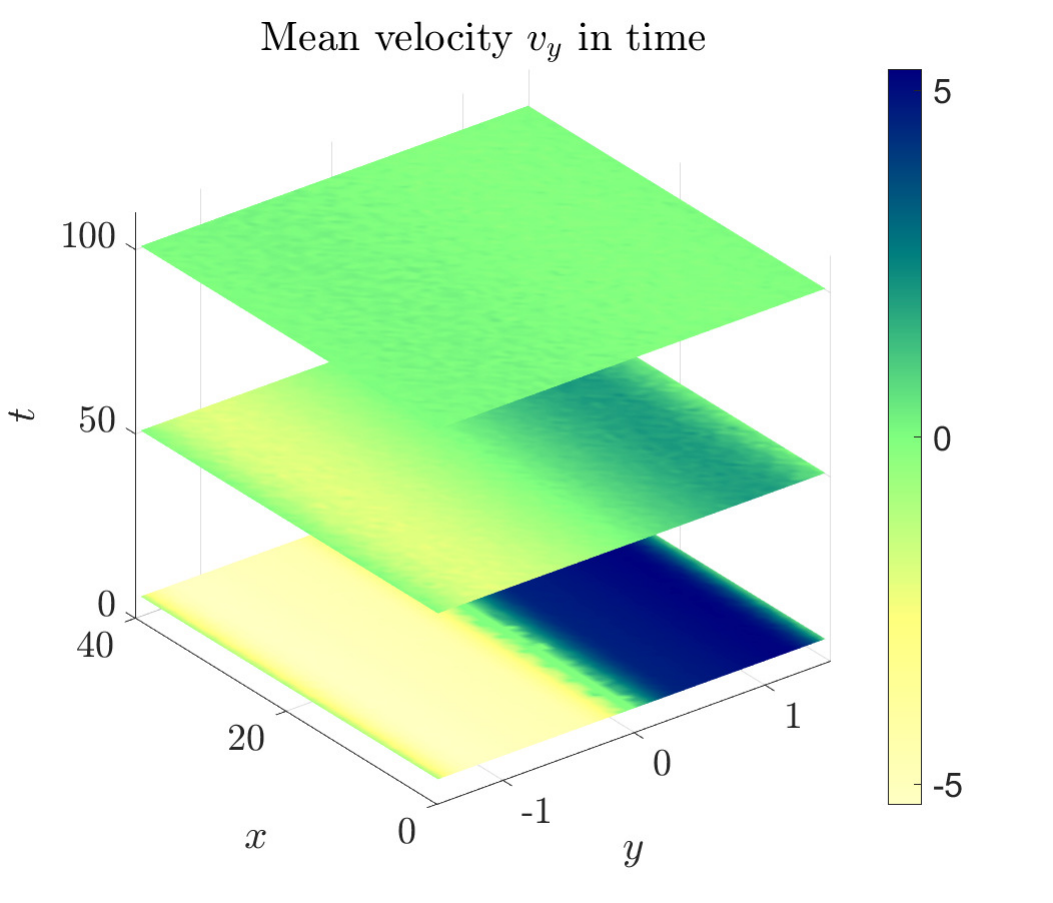}
	\includegraphics[width=0.328\linewidth]{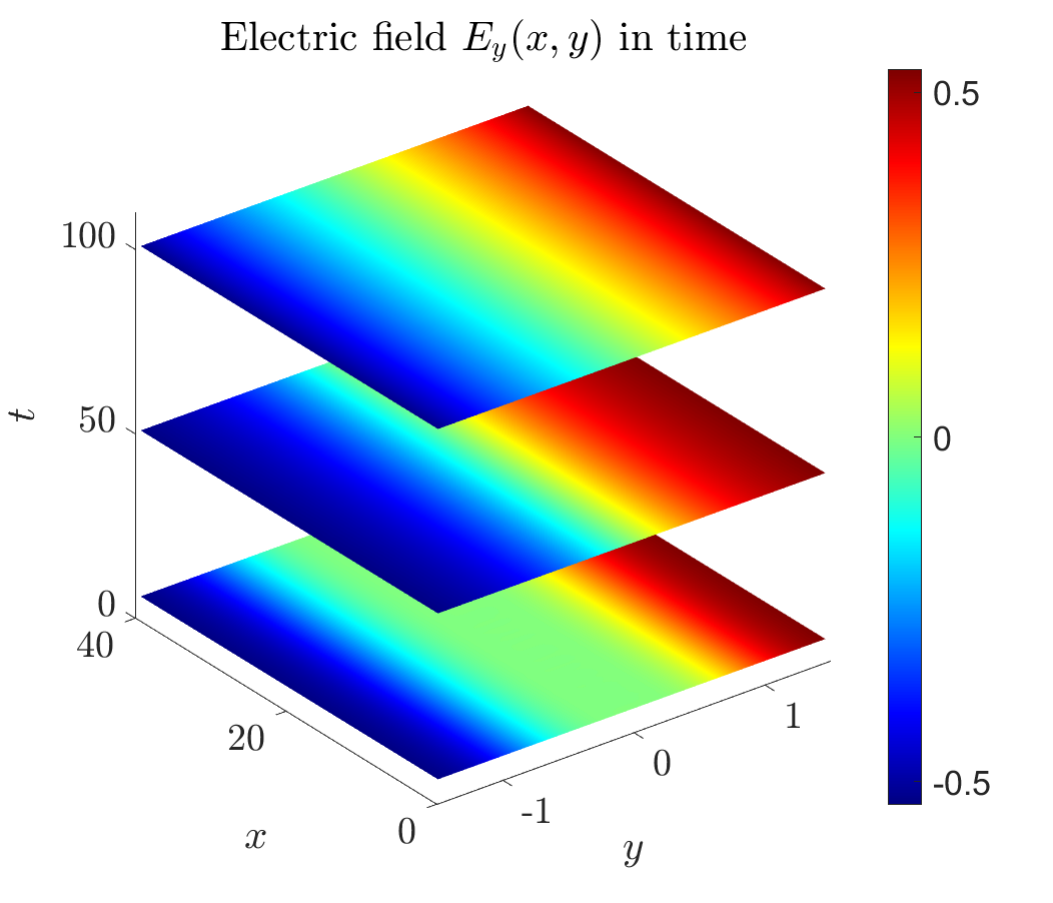}
	\caption{Two stream plasma test with constant magnetic field $B=1$. First row: mass density taken at time $t=5$, $t =50$ and $t=100$. Second row: mean velocity in the $y$-direction (on the left), electric field in the $y$-direction (on the right).  }
	\label{fig:no_control_ex}
\end{figure}

Given the above detailed configuration, in Figure \ref{fig:no_control_ex}, in the first row there are shown three different snapshots of the density at time $t=5$, $t =50$ and $t=100$. 
The dynamics is such that particles move from the centre to the $y-$boundaries and they are then reflected back again toward the centre due to the choice of the reflective boundary conditions in $y$. After some time a spread of the density in all domain is observed (see again Figure \ref{fig:no_control_ex} on the right). In the second row we plot the mean velocity and the electric field in the $y$-direction over time. 
In order to test after the effectiveness of the control, we compute also in this uncontrolled case, the percentage of mass hitting the walls over time as well as the thermal energy of the cells close to the boundaries. The mass at the boundaries at time $t^n$ is computed by 
\begin{equation}\label{eq:mass_boundary}
	\rho_{b}(t^n) =  \int_{\Omega_b} f^N(t^n,\xx,\vv) d\xx d\vv, 
\end{equation}
with $f^N(\cdot)$ defined as in \eqref{eq:approx_density} and $\Omega_b=[-1.5,-0.625] \cup [0.625,1.5]$ a region close to the $y$ boundaries where, in particular, the width in the $y-$direction corresponds to the size of one cell $\Delta_y$.  
The thermal energy at the boundaries at time $t^n$ is instead computed as 
\begin{equation}\label{eq:energy}
	\begin{split}
		&\rho_{b}(t^n)T_b(t^n) =\frac{1}{2} \int_{\Omega_b} \vert \vv\vert^2 f^N(t^n,\xx,\vv) d\xx d\vv+\\&-\frac{1}{2\rho_b(t^n)}\left(\left( \int_{\Omega_b} v_x f^N(t^n,\xx,\vv) d\xx d\vv\right)^2+\left( \int_{\Omega_b} v_y f^N(t^n,\xx,\vv) d\xx d\vv\right)^2\right), 
	\end{split}
\end{equation}
with $f^N(\cdot)$ defined as before and again $\Omega_b=[-1.5,-0.625] \cup [0.625,1.5]$.

In Figure \ref{fig:no_control_mass_energy_ex} on the left the density percentages and, on the right, the thermal energy at the boundaries as a function of time are respectively shown. In the images, the mass density $\rho_i(t)$ represents the internal mass, i.e. $\rho_i(t)=\rho_{tot}-\rho_b(t)$. As expected when particles hits the boundaries, the local energy increases while when far from them it drops down. 
\begin{figure}[h!]
	\centering
	\includegraphics[width=0.35\linewidth]{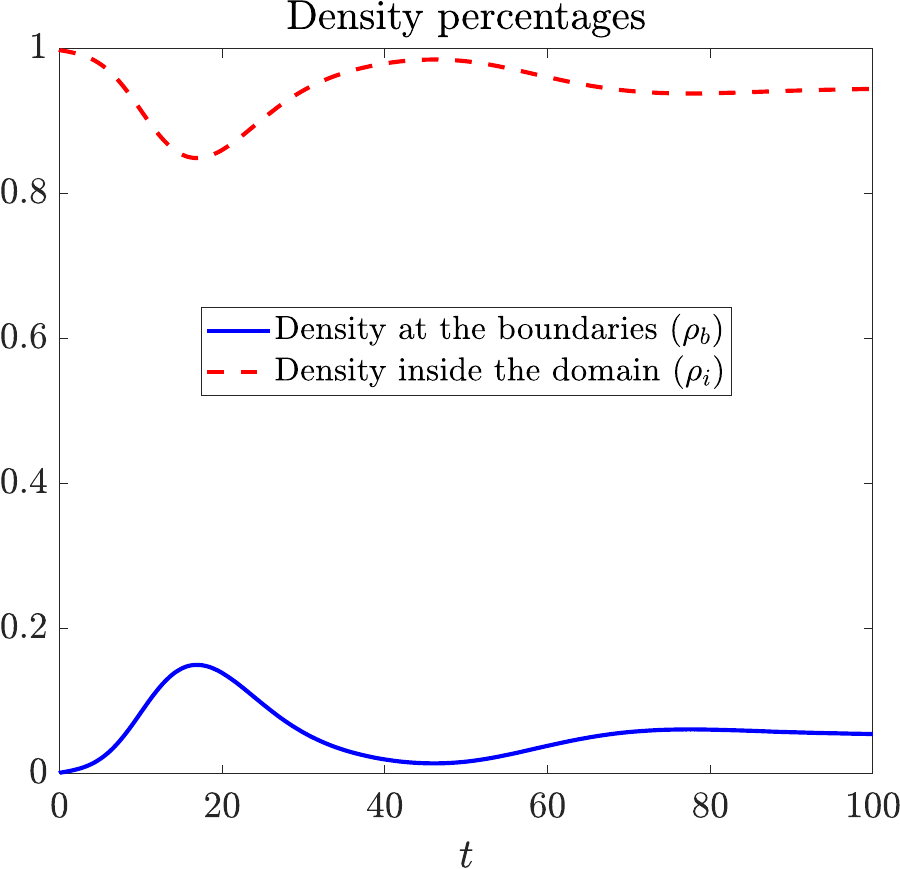}
	\includegraphics[width=0.36\linewidth]{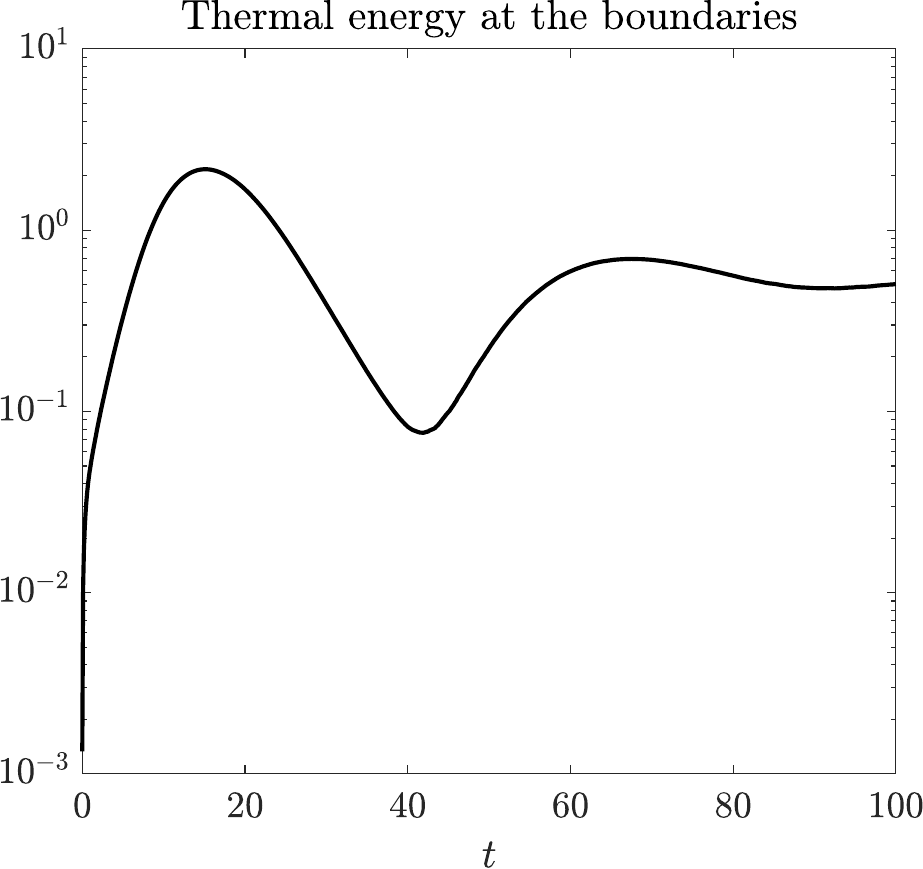}
	\caption{Two stream plasma test with constant magnetic field $B=1$. Density percentages defined as in equation \eqref{eq:mass_boundary} (on the left). Thermal energy at the boundaries defined as in \eqref{eq:energy} (on the right).}
	\label{fig:no_control_mass_energy_ex}
\end{figure}

\paragraph{Controlling the velocity and the position field.}
We suppose now that the magnetic field is obtained as the solution of a control problem aiming at minimizing the percentage of mass which hits the lower and upper boundaries \eqref{eq:L2_control_continuos}. This result is obtained by forcing the velocities and the positions of the particles with a suitable $\BB_{ext}$. We suppose the spatial domain, on which the control is active, to be divided into $K_x\times K_y=N_c$ cells $C_k$, we set $K_x=1$ and $K_y=2$, and we consider the instantaneous control \eqref{eq:L2_control_space_velocity} which aim at controlling both the particles positions and velocities. In details, we choose $\alpha_\emph{x} = \alpha_\emph{v} = 1.5$, $\beta_\emph{x} =\beta_\emph{v}= 0.1$, and $\gamma = 10^{-3}$ with targets $\hat{y}_k=0$ for any $k=1,2$ and $\hat{v}_{y_1} =1$, $\hat{v}_{y_2} = -1$, in such a way to force the mass to move toward the center of the domain and to remain in that region over time.
In Figure \ref{fig:L2_space_velocity_ex} in the first row, the density field at different times $t = 5$, $t =50$ and $t=100$ is shown. In the second row, the correspondent value of the velocity mean in the $y$ direction, of the electric field in the $y$ direction, and of the magnetic field in time are depicted. We observe that the instantaneous control realize a sort of bang-bang strategy which is activated once that the mass move sufficiently far away from the center of the domain.  
\begin{figure}[h!]
	\centering
	\includegraphics[width=0.328\linewidth]{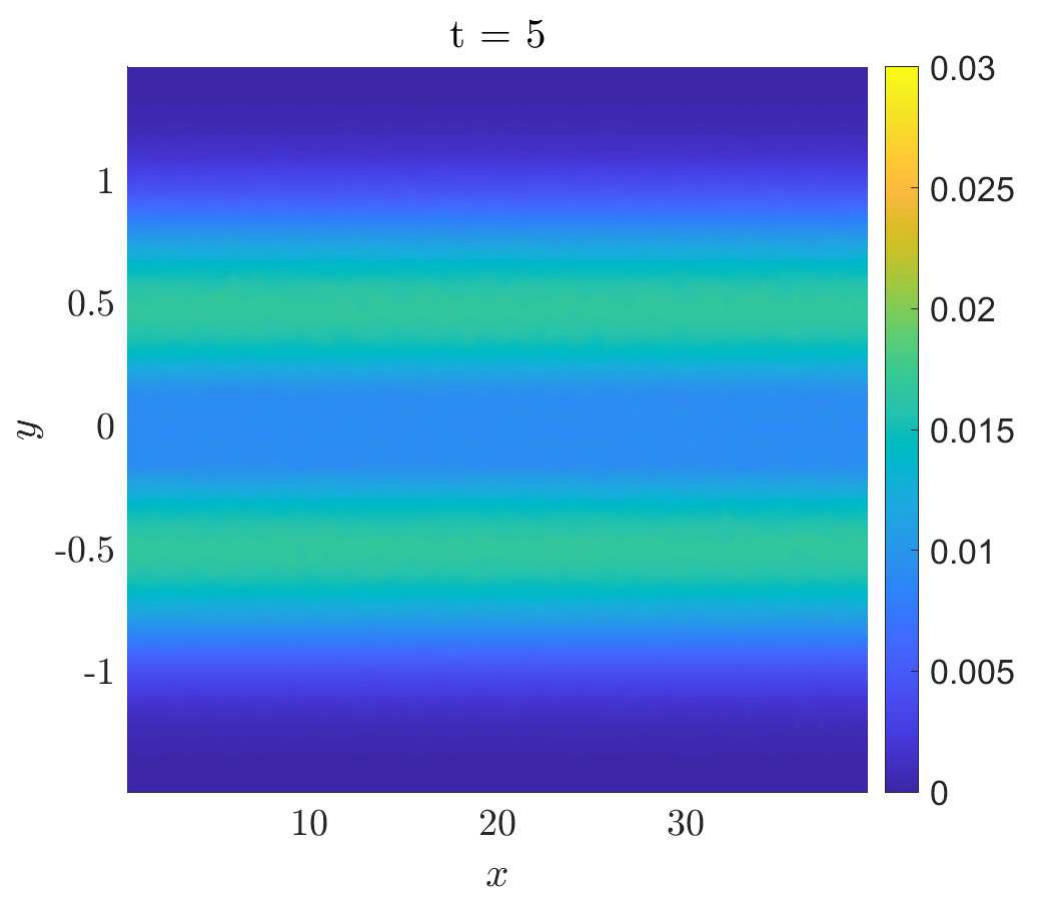}
	\includegraphics[width=0.328\linewidth]{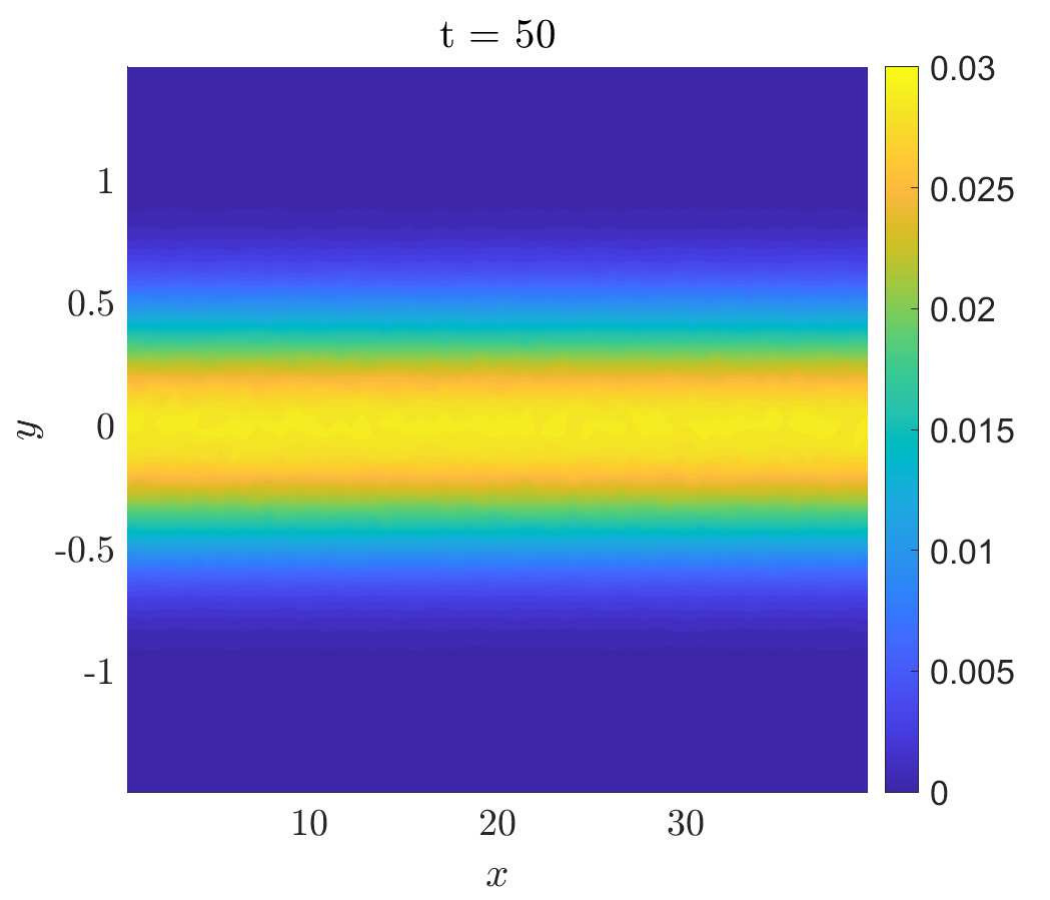}	\includegraphics[width=0.328\linewidth]{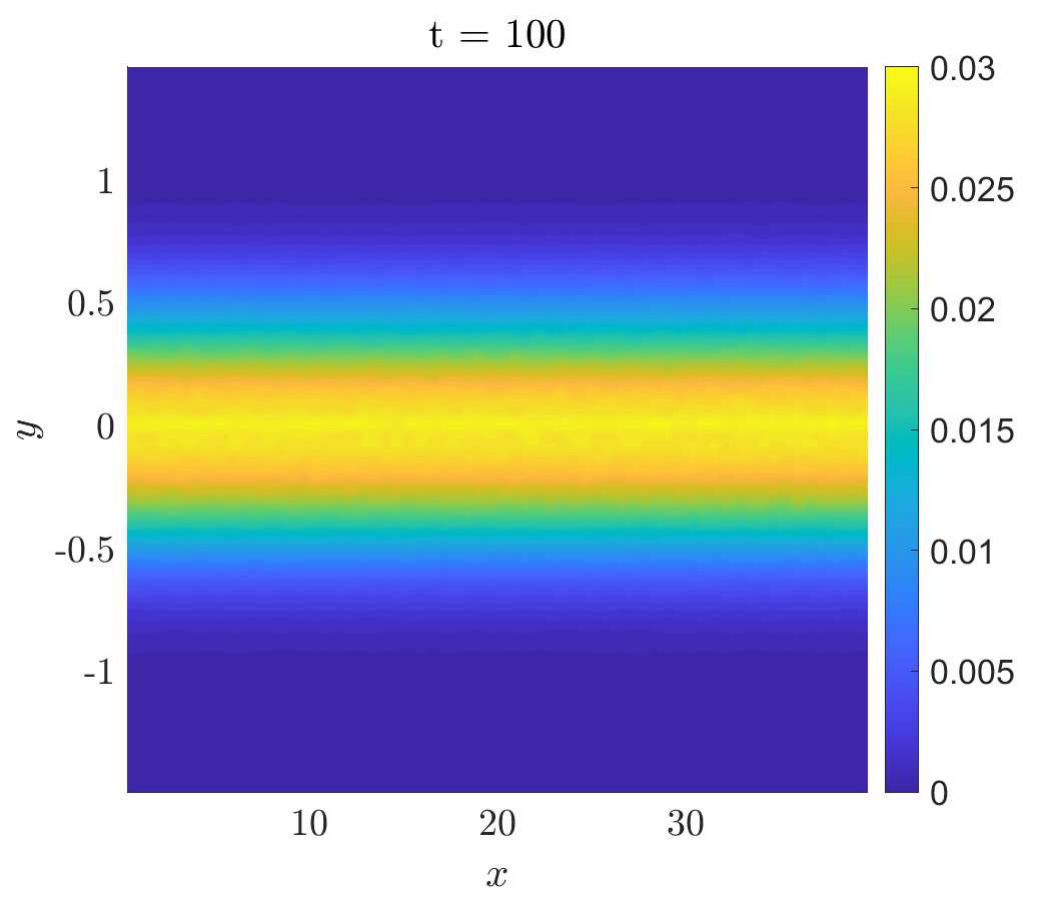}\\	
	\includegraphics[width=0.328\linewidth]{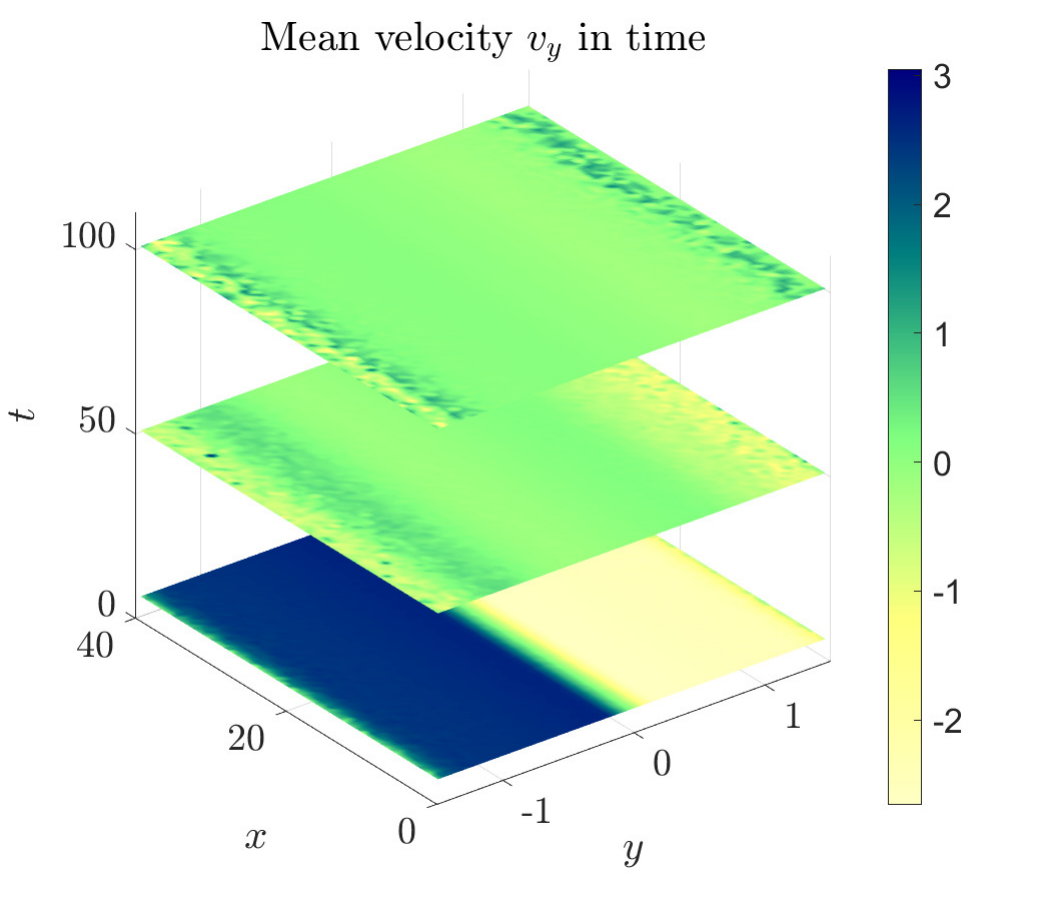}
	\includegraphics[width=0.328\linewidth]{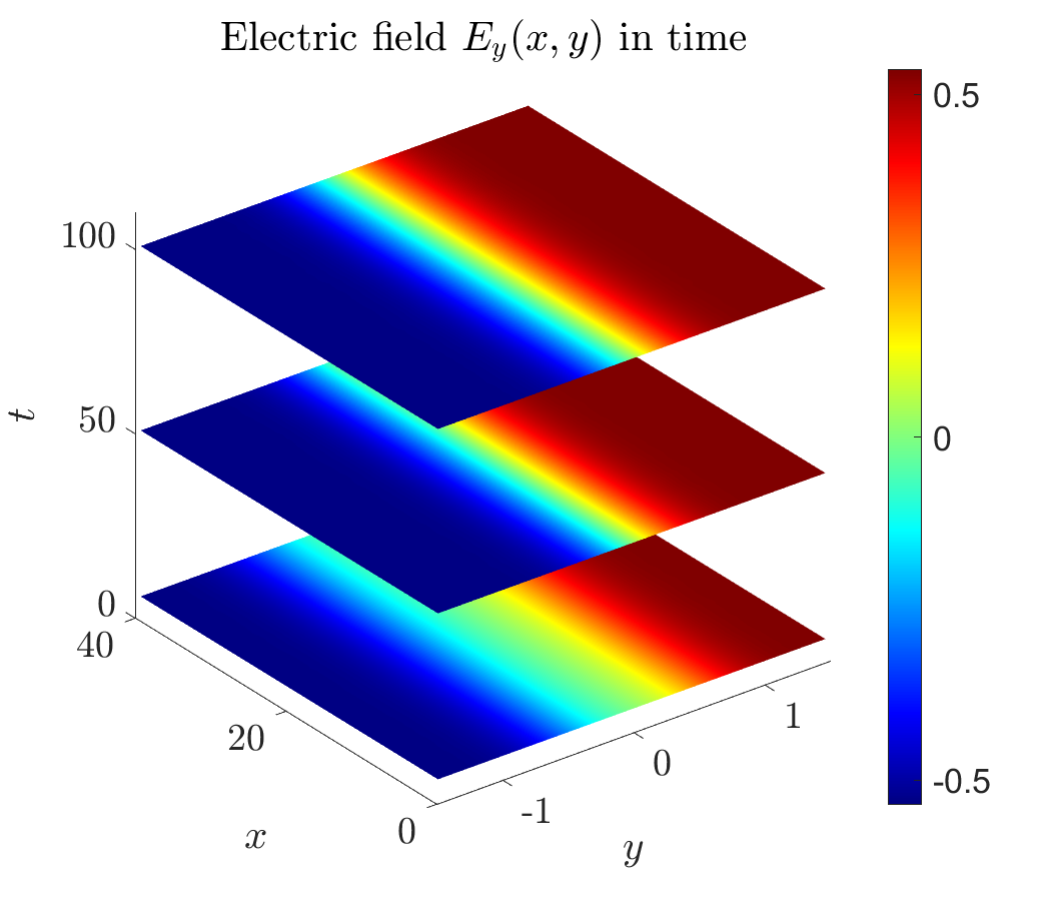}		\includegraphics[width=0.328\linewidth]{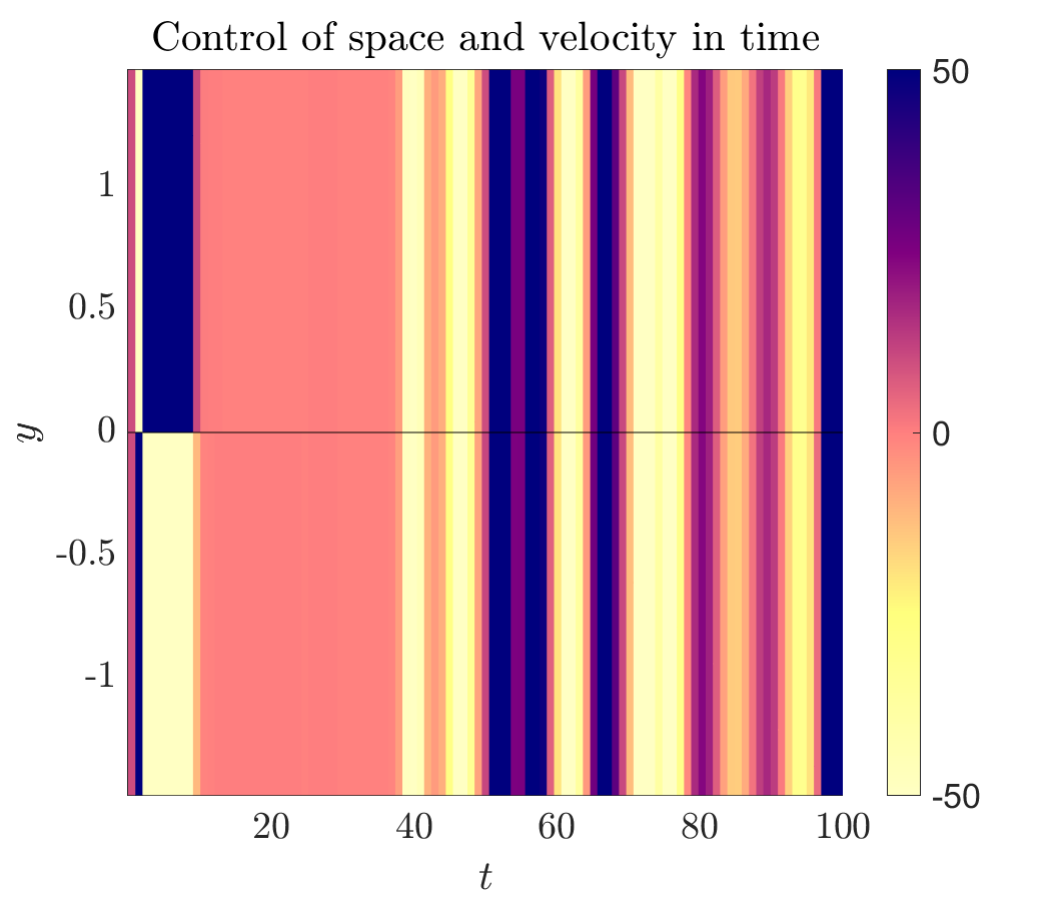}
	\caption{Two stream plasma test with control. We set $\alpha_\emph{x} = \alpha_\emph{v} = 1.5$, $\beta_\emph{x} =\beta_\emph{v}= 0.1$, and $\gamma = 10^{-3}$. First row: mass density taken at time $t=5$, $t =50$ and $t=100$. Second row: mean velocity in the $y$-direction (on the left),  electric field in the $y$-direction (in the middle), magnetic field (on the right).  }
	\label{fig:L2_space_velocity_ex}
\end{figure}
In Figure \ref{fig:L2_space_velocity_mass_energy_ex}, we plot the density percentages in time and the thermal energy at the boundaries. As for the previous cases, even if part of the mass still hits the boundaries the energy decreases toward zero, in other words, the control is effective. 
\begin{figure}[h!]
	\centering
	\includegraphics[width=0.4\linewidth]{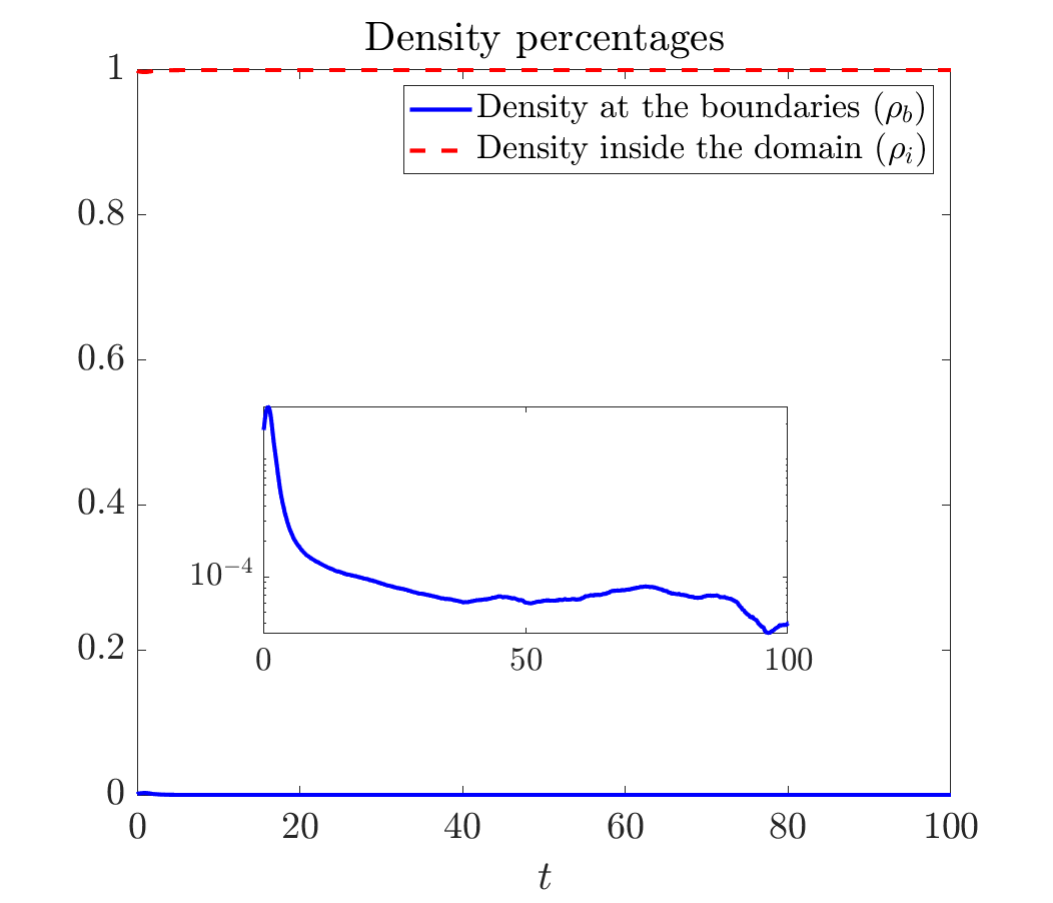}
	\includegraphics[width=0.4\linewidth]{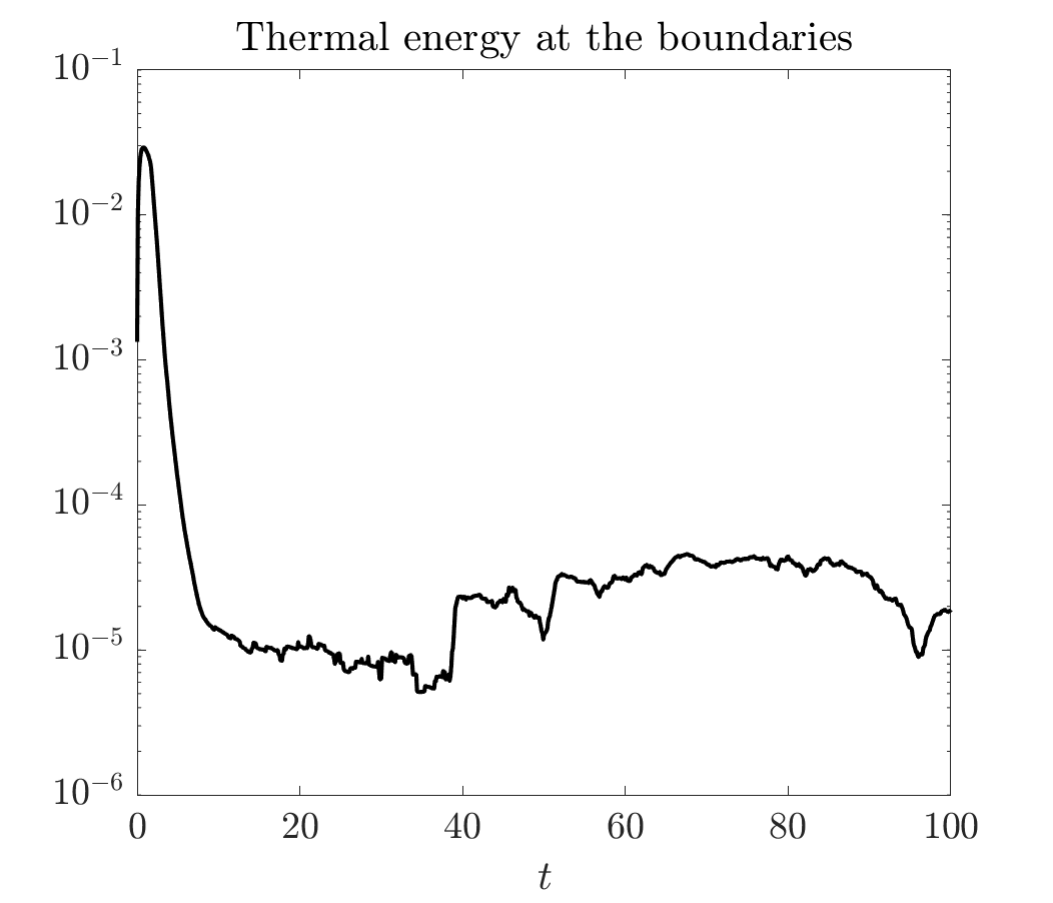}
	\caption{Two stream plasma test with control. We set $\alpha_\emph{x} = \alpha_\emph{v} = 1.5$, $\beta_\emph{x} =\beta_\emph{v}= 0.1$, and $\gamma = 10^{-3}$. Density percentages defined as in equation \eqref{eq:mass_boundary} (on the left). Thermal energy at the boundaries defined as in \eqref{eq:energy} (on the right).}
	\label{fig:L2_space_velocity_mass_energy_ex} 
\end{figure}

\subsection{Kelvin-Helmholtz instability.}
In this section, we focus on Kelvin-Helmholtz instability for charged particles^^>\cite{crouseilles2010conservative,sonnendrucker1999semi,chacon2016optimization,ghizzo1993eulerian} and we perform a similar analysis of the one detailed in the previous part by comparison of the controlled and uncontrolled cases. We choose periodic boundary conditions in $x$ and Dirichlet boundary conditions in $y$, and for the uncontrolled case we assume to have a strong magnetic field $B$ acting on the orthogonal plane. We consider a domain $x\in[0,40]$, $y\in[-5,5]$ and an initial density give by
\begin{equation}\label{eq:kelvin_initial_density}
	\rho_0(\xx) = \frac{1.5}{2\pi} \mbox{sech}\Big(\frac{y}{0.9}\Big) (1+\epsilon_0\cos({3k_0x})+\epsilon_1\sin(k_0x))), 
\end{equation}
\begin{figure}[h!]
	\centering
	\includegraphics[width=0.328\linewidth]{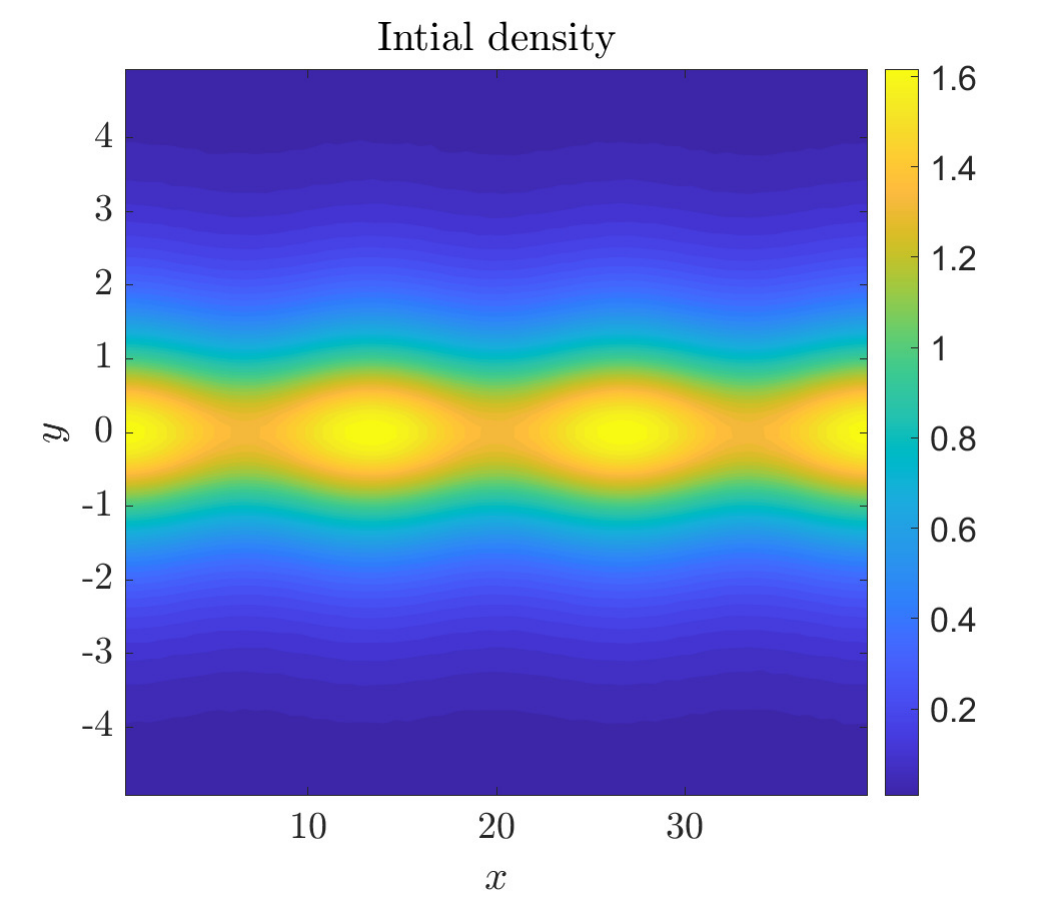}
	\includegraphics[width=0.328\linewidth]{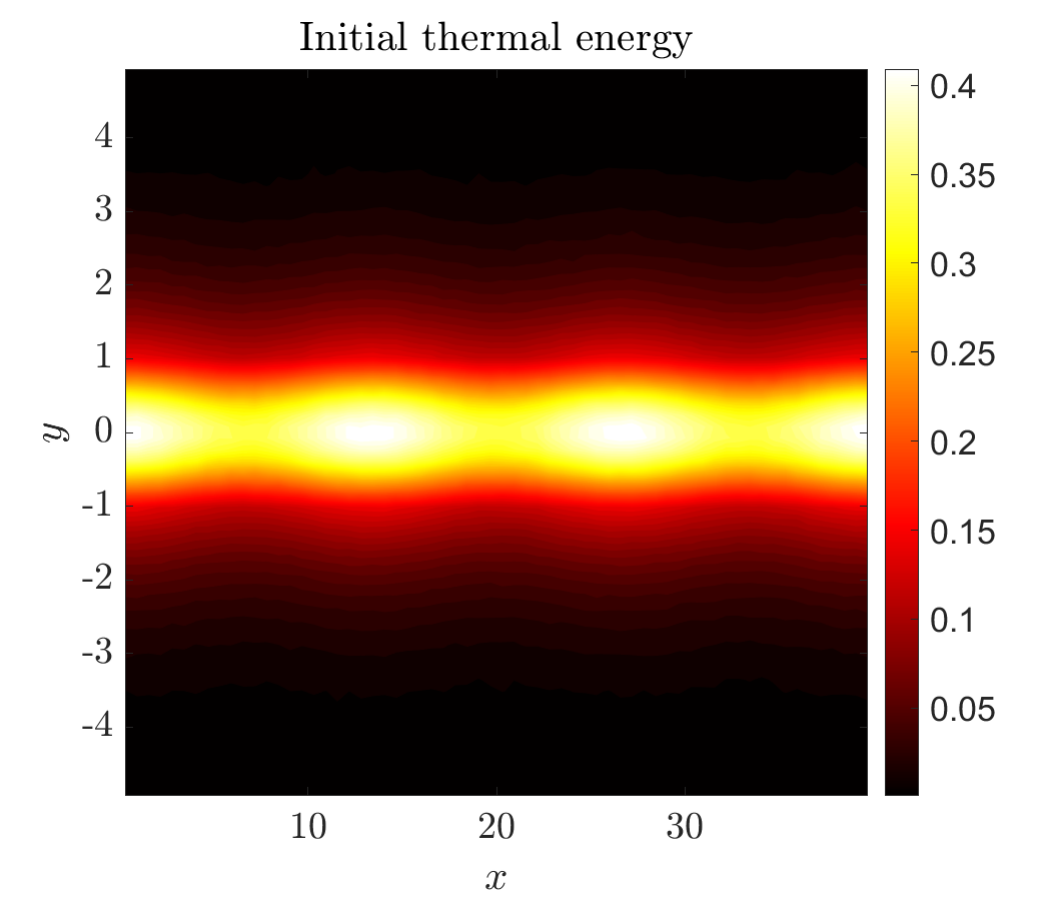}
	\includegraphics[width=0.328\linewidth]{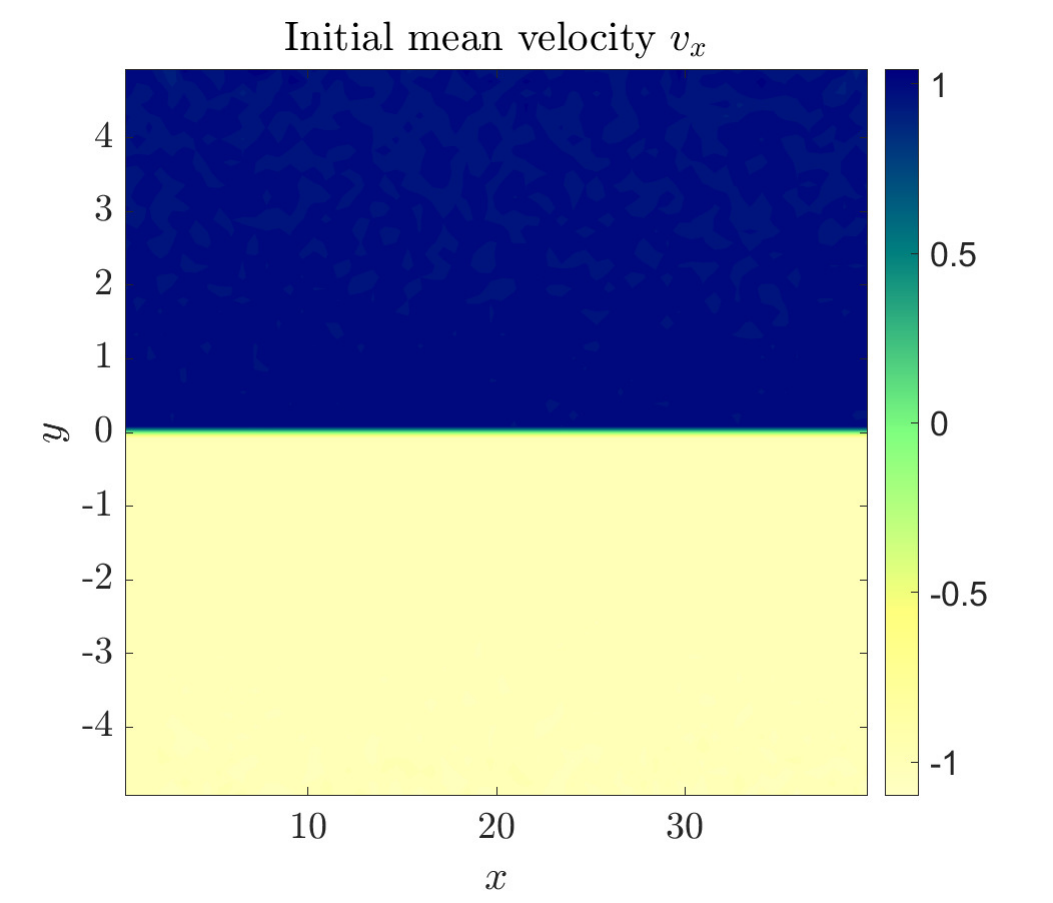}\\
	\includegraphics[width=0.328\linewidth]{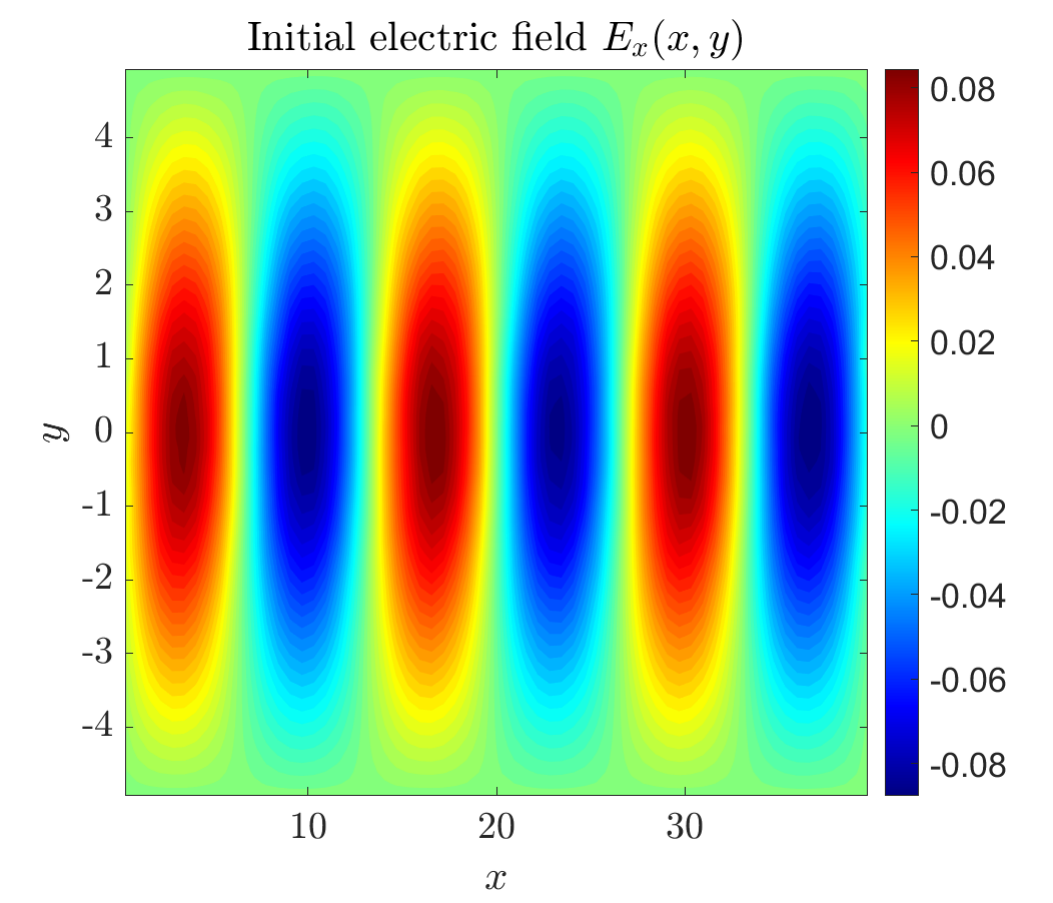}
	\includegraphics[width=0.328\linewidth]{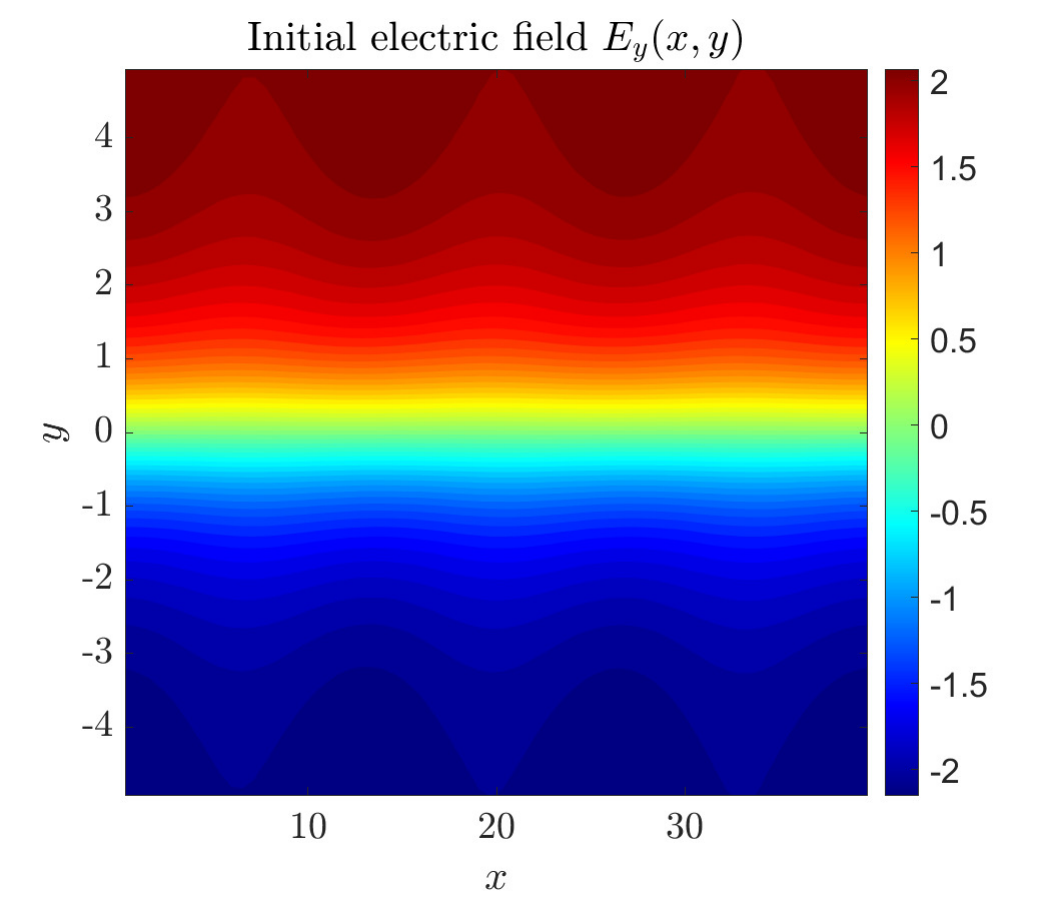}
	\caption{Kelvin-Helmholtz instability test. Initial density (top left). Initial temperature (top centre). Initial mean velocity in the $x$- direction (top right). Initial electric field in $x$ and $y$ direction (bottom).}
	\label{fig:ic_kelvin}
\end{figure}
with $k_0 = 0.15$,  $\epsilon_0=0.1$, $\epsilon_1=0.001$. We also take
\begin{equation}\label{eq:f0_kelvin} 
	f_0(\xx,\vv) = f_0^+(t,\xx,\vv) \chi(y\geq0)+ f_0^-(t,\xx,\vv)\chi(y<0),
\end{equation}
with 
\begin{equation}\label{eq:fo_kelvin_p_m}
	f_0^\pm(\xx,\vv) =\frac{\rho_0(\xx)}{ 2\pi T_0(\xx)}\exp\left(-\frac{(v_x\pm u_x)^2 + v_y^2}{2T_0(\xx)}\right),
\end{equation}
and $u_x = 1$, denoting the mean velocity in the $x$ direction. The initial temperature is given by
\begin{equation}\label{eq:temp_sigma_kelvin}
	\begin{split}
		T_0(\xx) =  0.15+0.1\cos\Big(\frac{\pi y}{2}\Big) \chi(-1\leq y\leq 1).
	\end{split} 
\end{equation}
\begin{figure}[h!]
	\centering
	\includegraphics[width=0.328\linewidth]{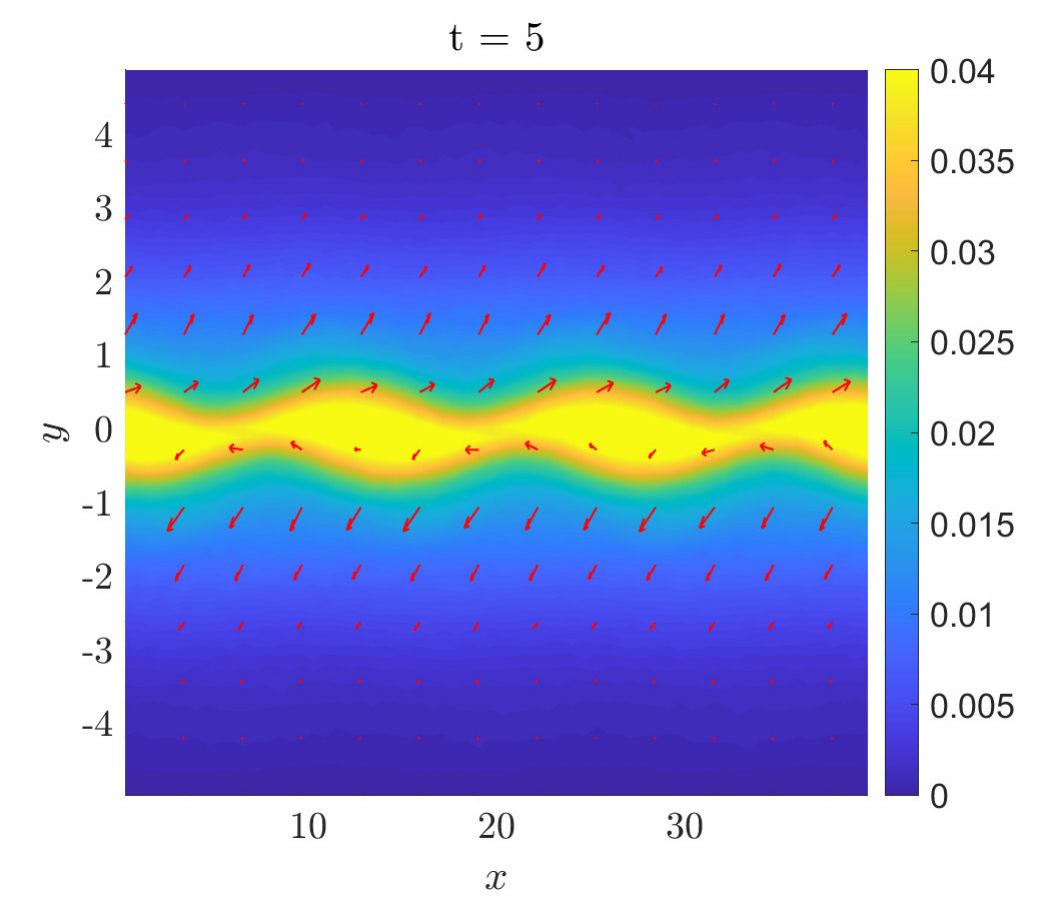}
	\includegraphics[width=0.328\linewidth]{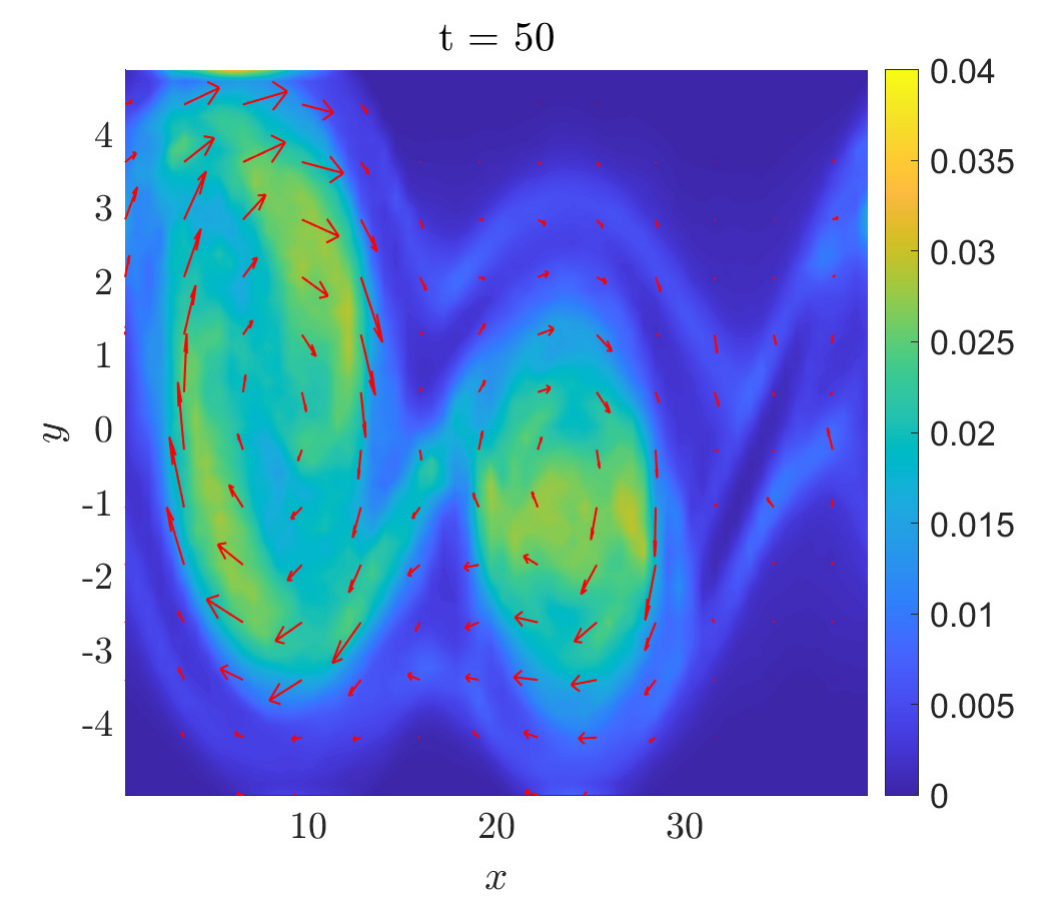}
	\includegraphics[width=0.328\linewidth]{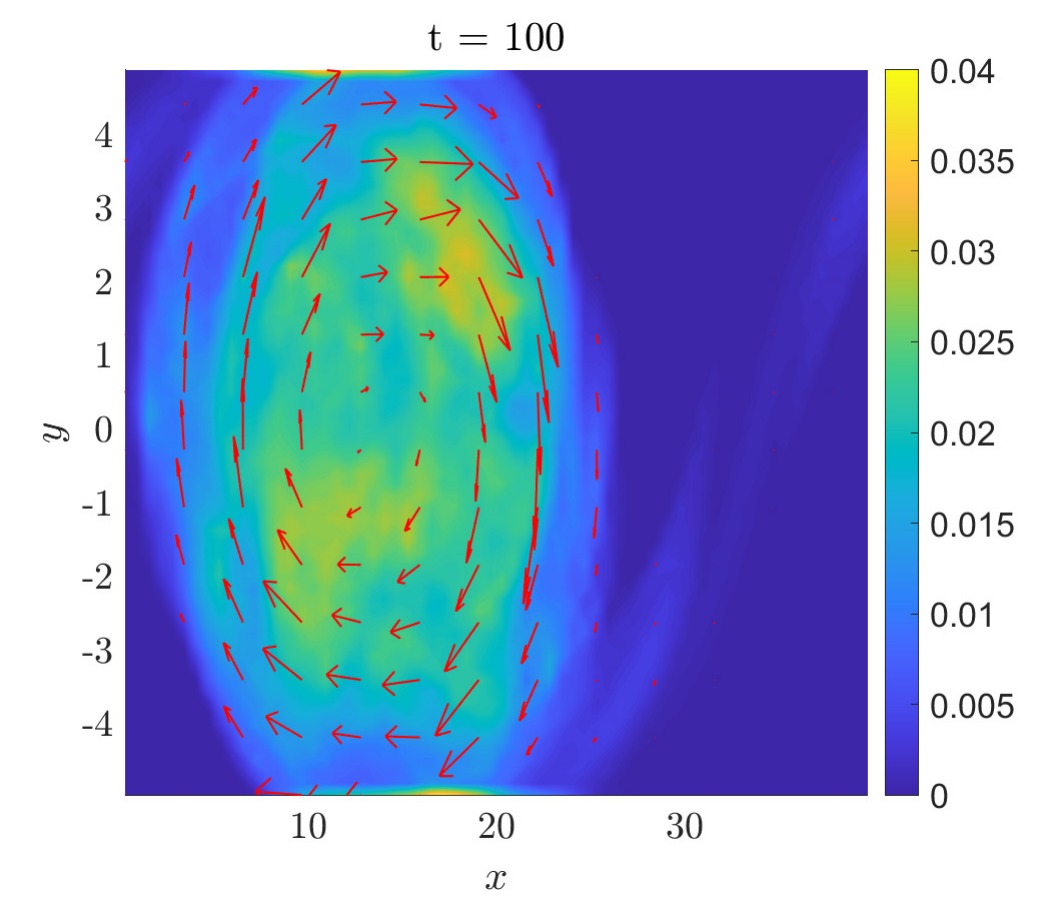}\\
	\includegraphics[width=0.328\linewidth]{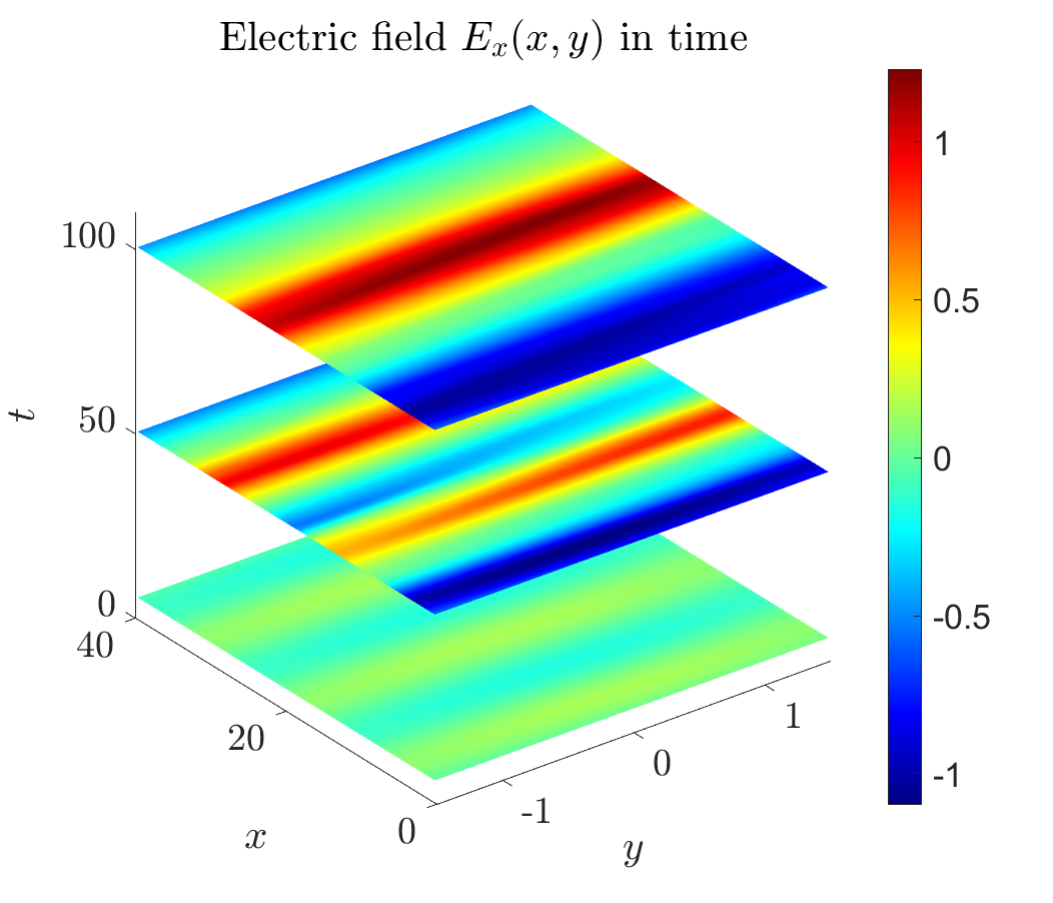} 	
	\includegraphics[width=0.328\linewidth]{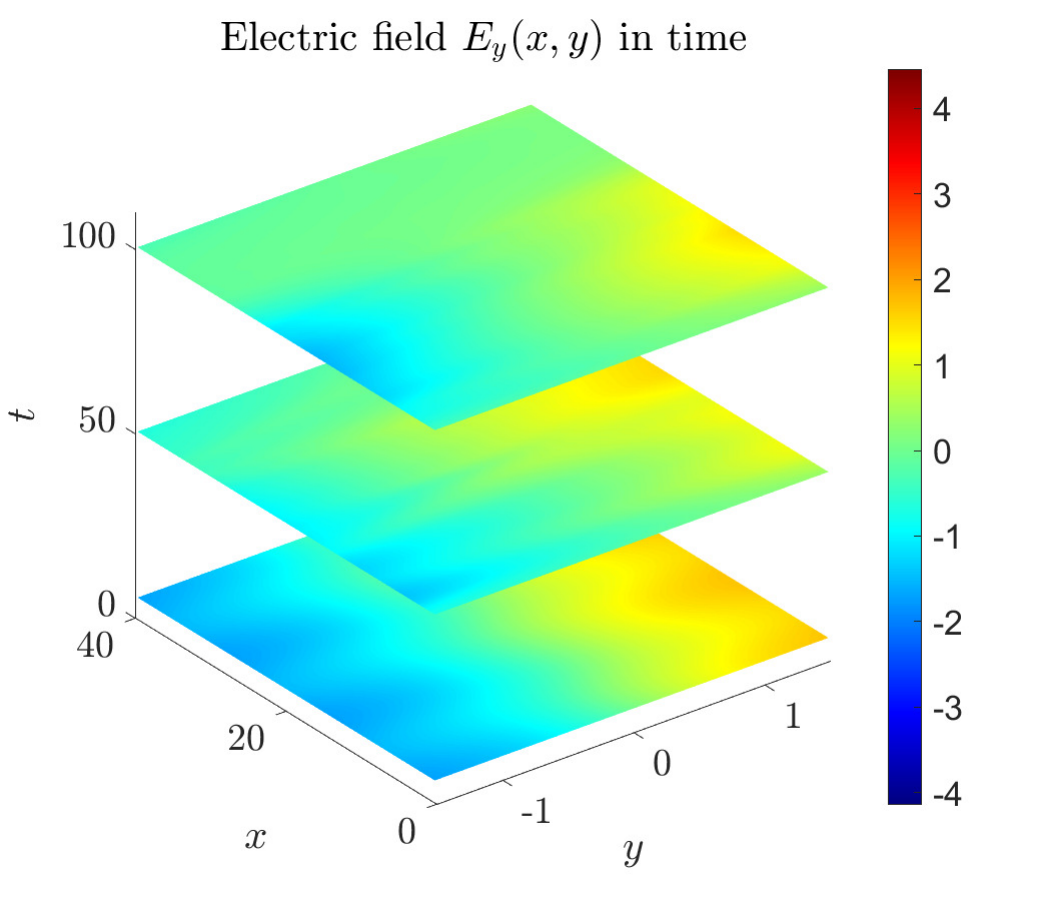}\\
	\centering
	\includegraphics[width=0.328\linewidth]{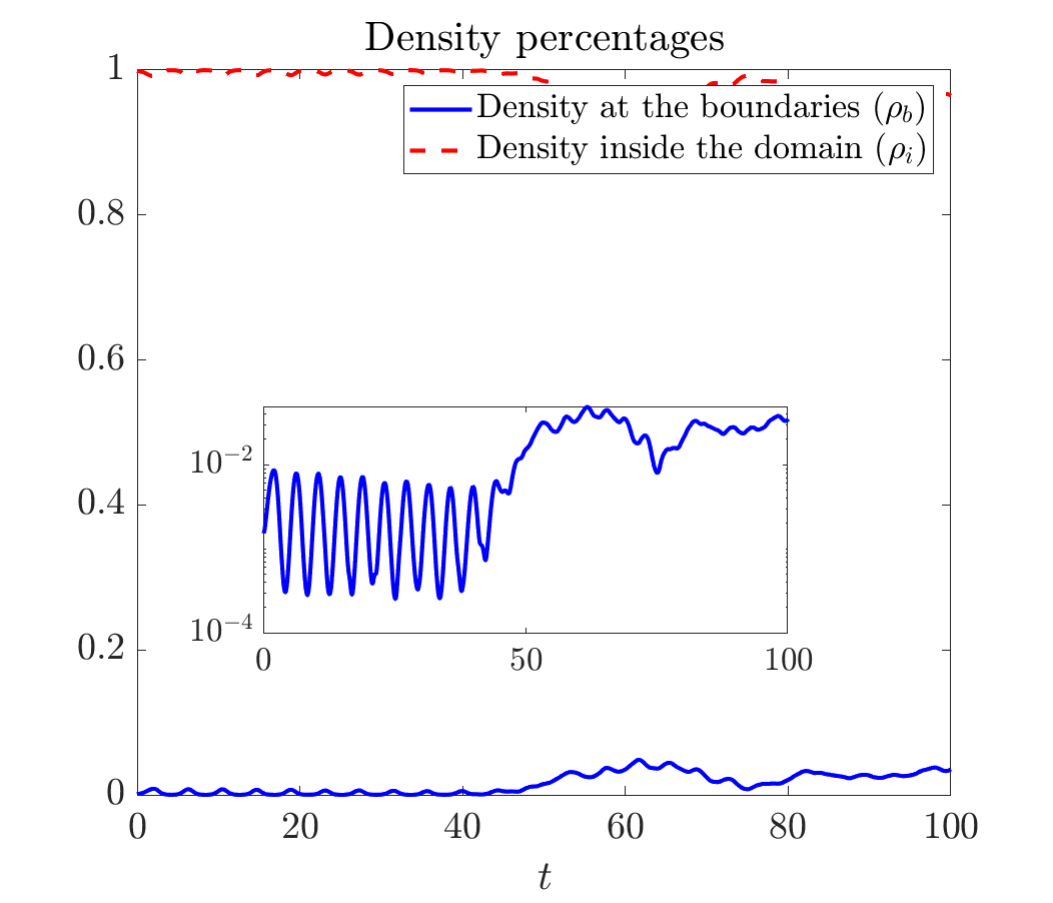}
	\includegraphics[width=0.328\linewidth]{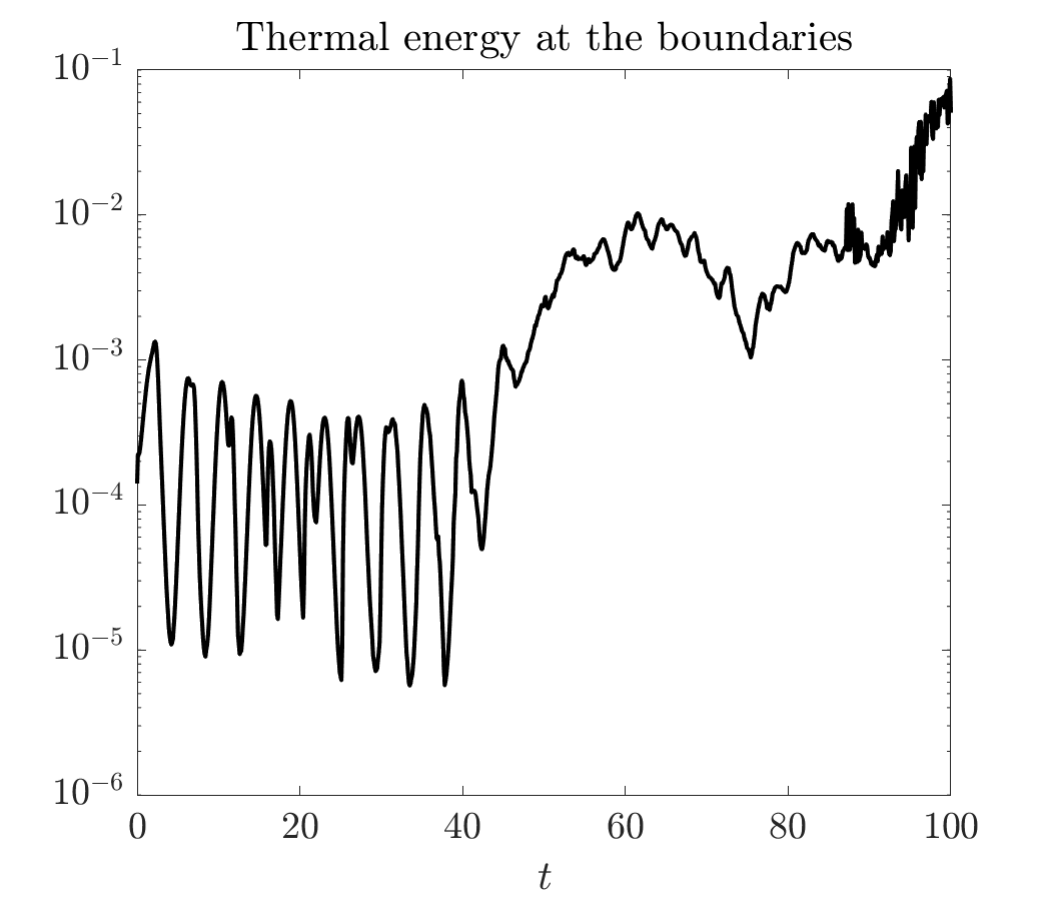}
	\caption{Kelvin-Helmholtz instability test: uncontrolled case. First row: density profile at time $t=5$, $t=50$ and $t=100$. In red the velocity vector field. Second row: value of the electric field in time. Third row: density percentages at the boundaries (on the left) and thermal energy at the boundary over time (on the right). }
	\label{fig:no_control}
\end{figure}
Simulations are performed using $N=10^7$ particles sampled from the above initial distribution, with  $m_x\times m_y$, $m_x=m_y = 64$ cells (see Figure \ref{fig:ic_kelvin}). The final time is $T=100$ and we choose $h=0.1$ as time step. We assume $N_c=K_x\times K_y$ cells, with $K_x=1$, $K_y=10$, for the definition of the control, and we measure the density and the thermal energy on the boundaries taking $\Omega_b = [-5,-4.8] \cup [4.8,5]$. 
In Figure \ref{fig:no_control}, in the first row, the density field at time $t=5$, $t =10$ and $t=100$ for the uncontrolled case with $B = 1.5$ are shown. In red the velocity vector field is also represented. In the second row, the corresponding electric field, and in the third row, the density and thermal energy at the boundaries over time are depicted. 
The case of the instantaneous control defined as in \eqref{eq:L2_control_space_velocity} is shown in Figure \ref{fig:kelvin_L2velocity}. We set $\alpha_\emph{x} = \alpha_\emph{v} = 1.5$, $\beta_\textrm{x} =\beta_\emph{v}= 0.1$, and $\gamma = 10^{-4}$, with targets $\hat{y}_k=0$, for $k=1,\ldots,10$, and $\hat{v}_{y_k}=1, \ k=1,\ldots,5$ and $\hat{v}_{y_k}=-1, \ k=6,\ldots,10$ to confine the mass at the centre of the domain as for the previous case. The results clearly show the capability of the instantaneous control strategy chosen to confine the plasma and to reduce the thermal energy close to the walls.
\begin{figure}[h!]
	\centering
	\includegraphics[width=0.328\linewidth]{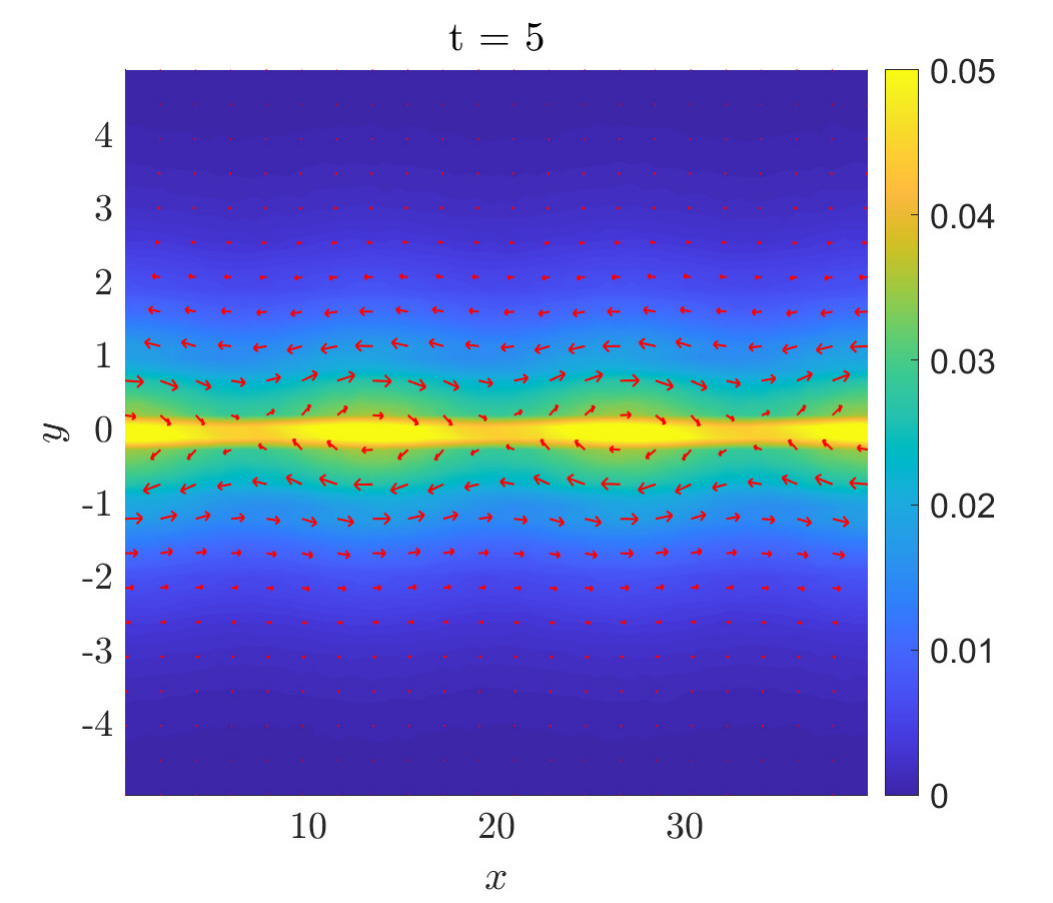} 	
	\includegraphics[width=0.328\linewidth]{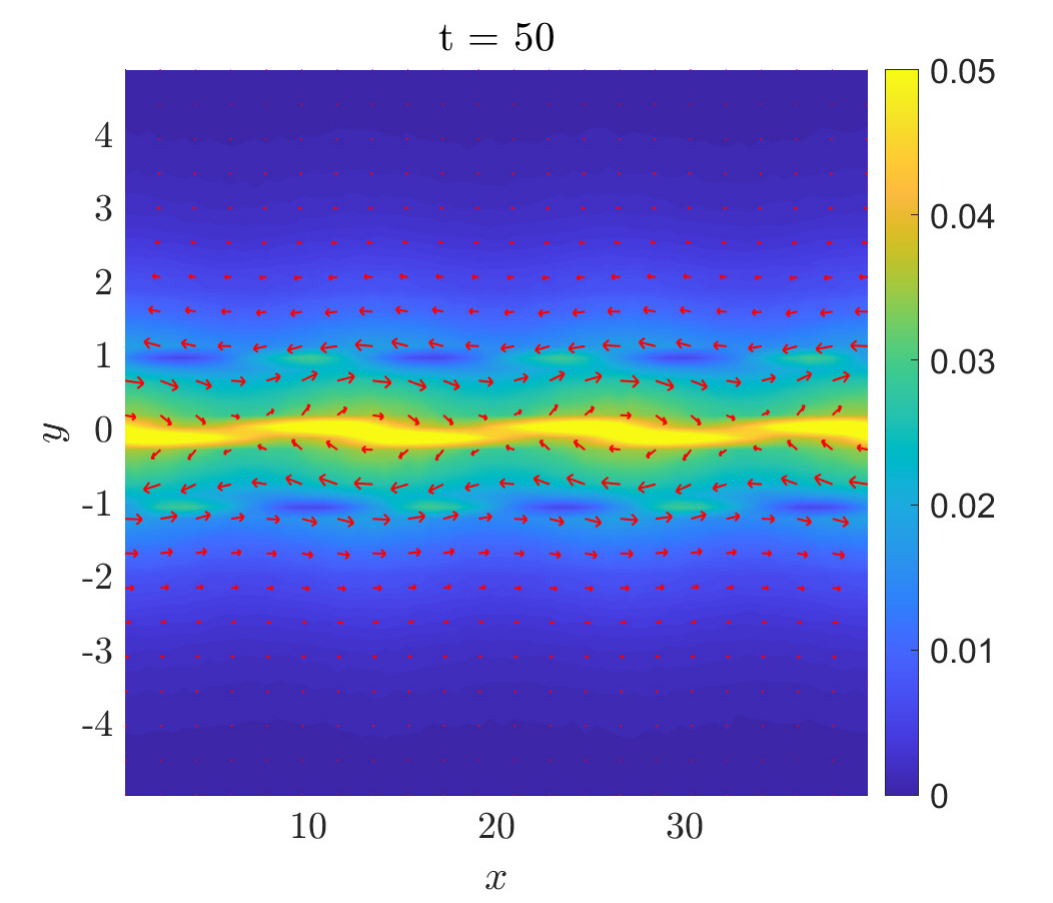} 	
	\includegraphics[width=0.328\linewidth]{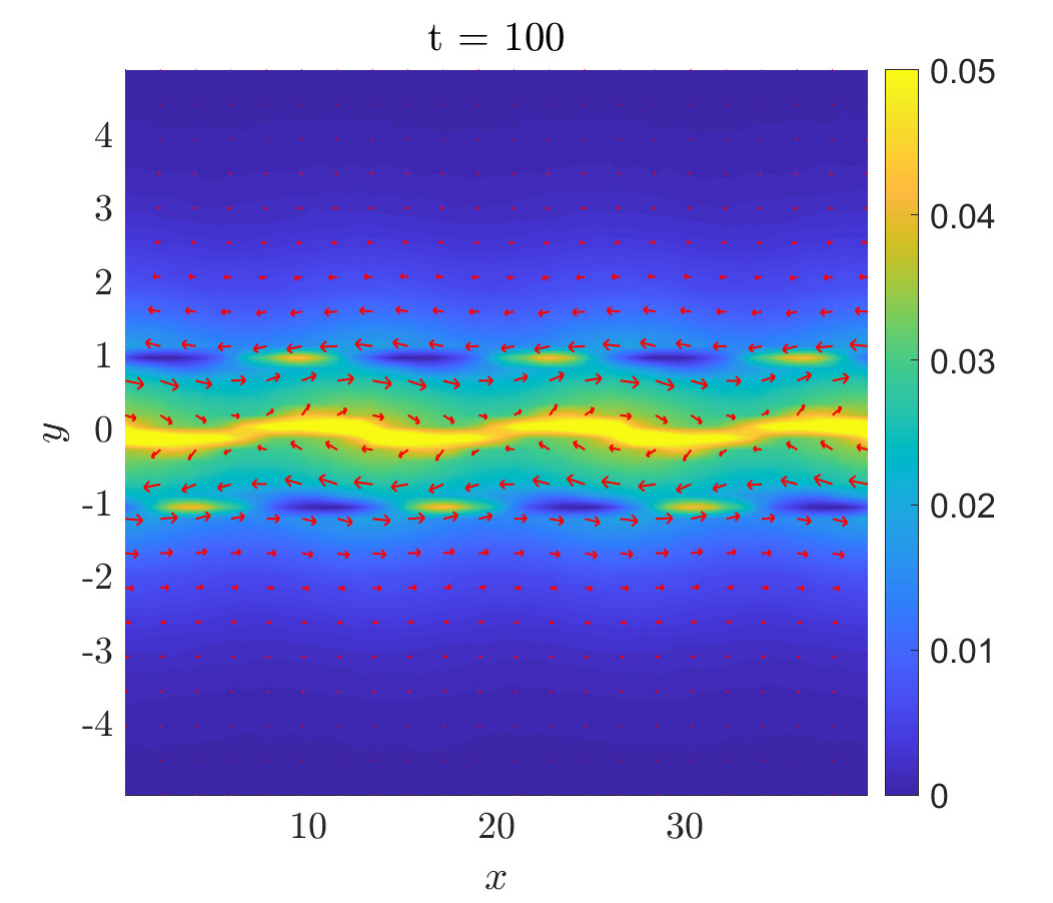} 	 	\\
	\includegraphics[width=0.328\linewidth]{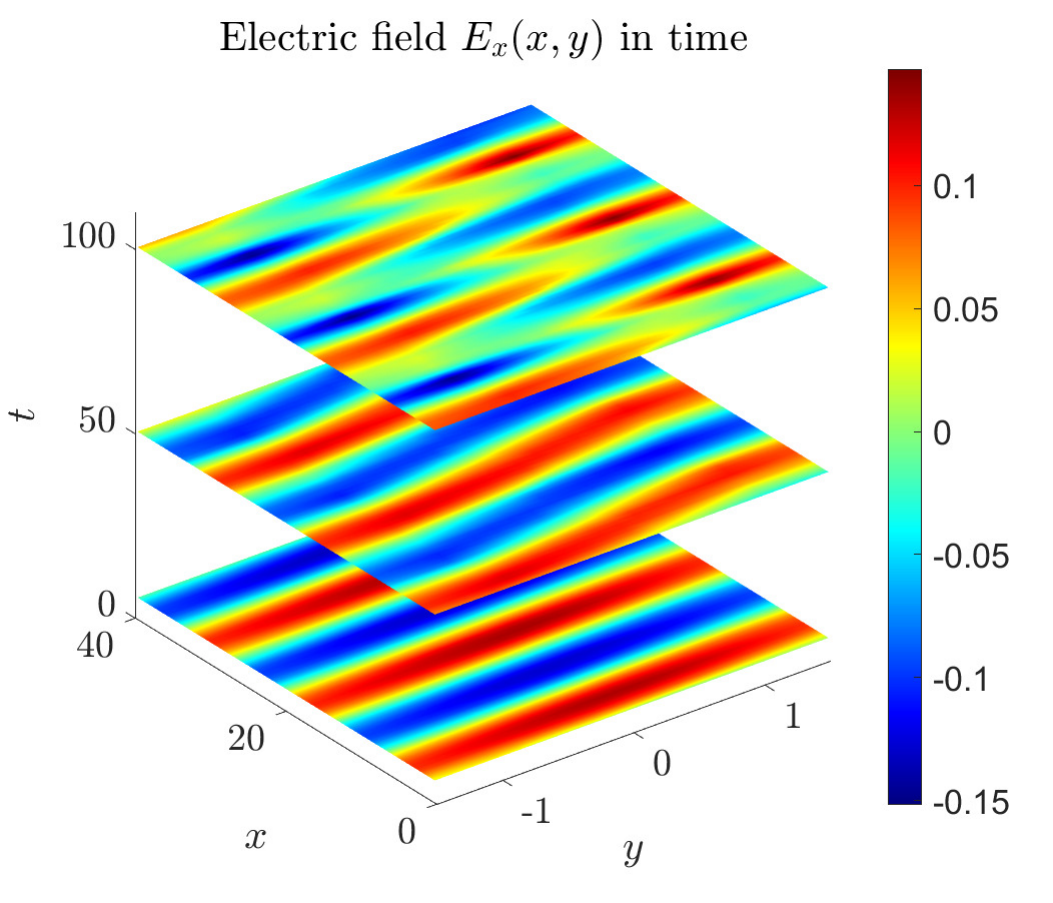} 	
	\includegraphics[width=0.328\linewidth]{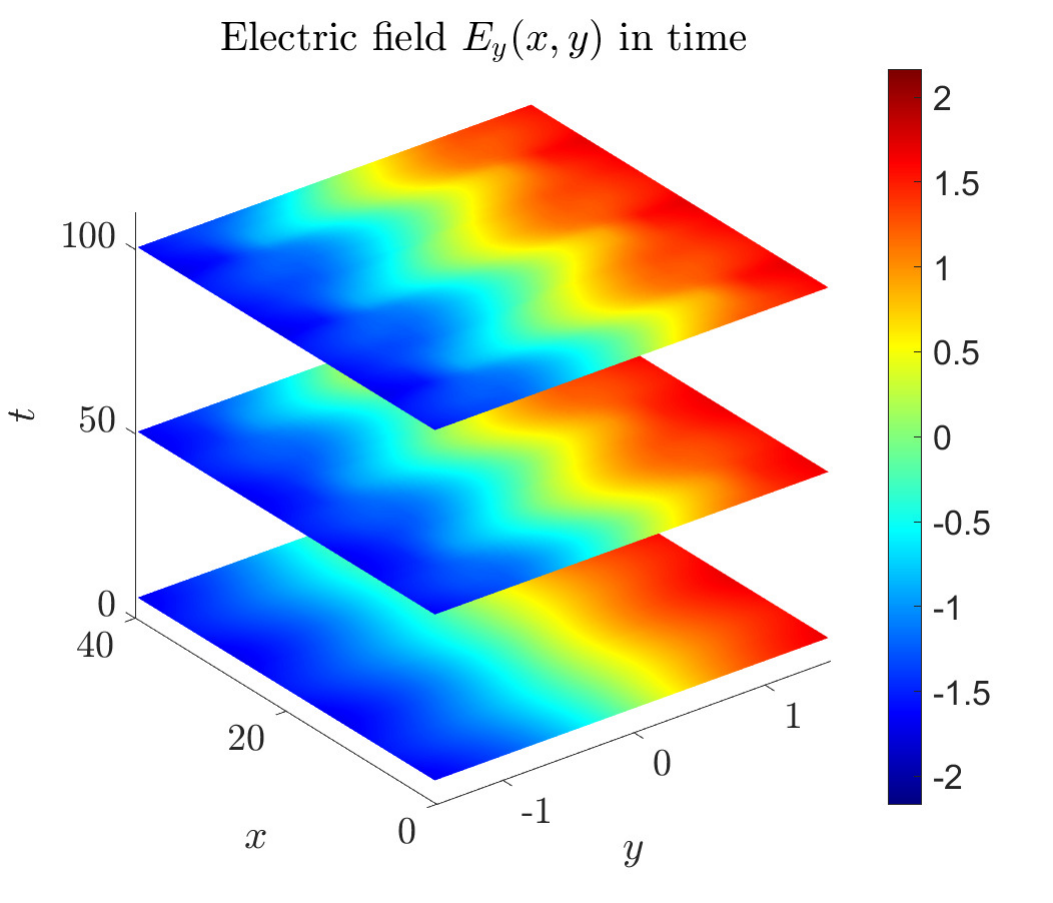} 	
	\includegraphics[width=0.328\linewidth]{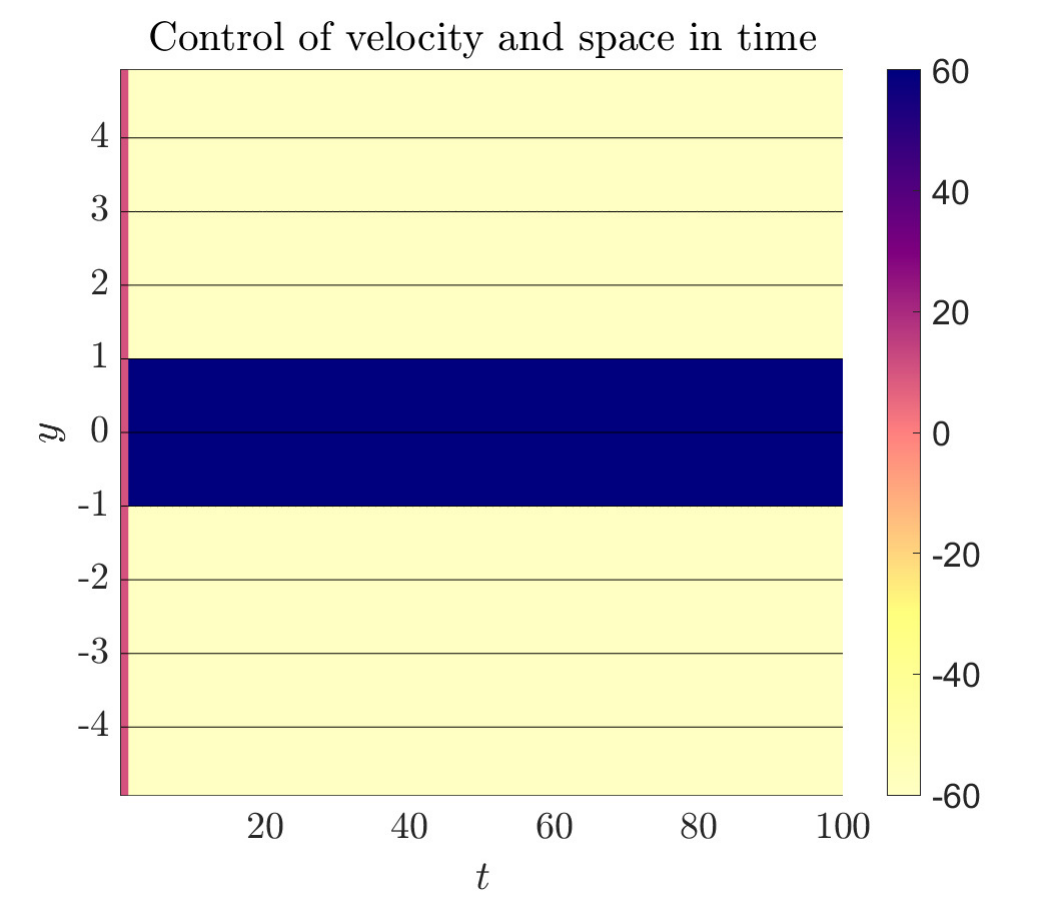} \\
	\includegraphics[width=0.328\linewidth]{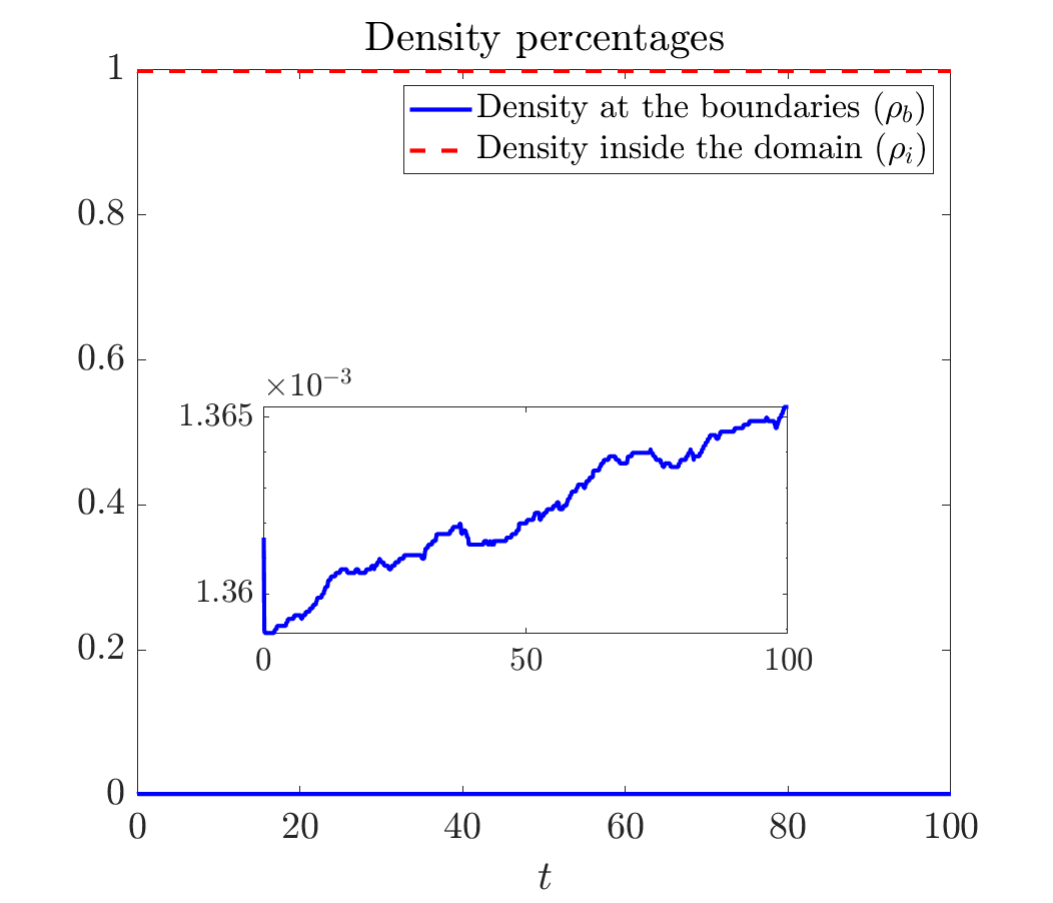} 	
	\includegraphics[width=0.328\linewidth]{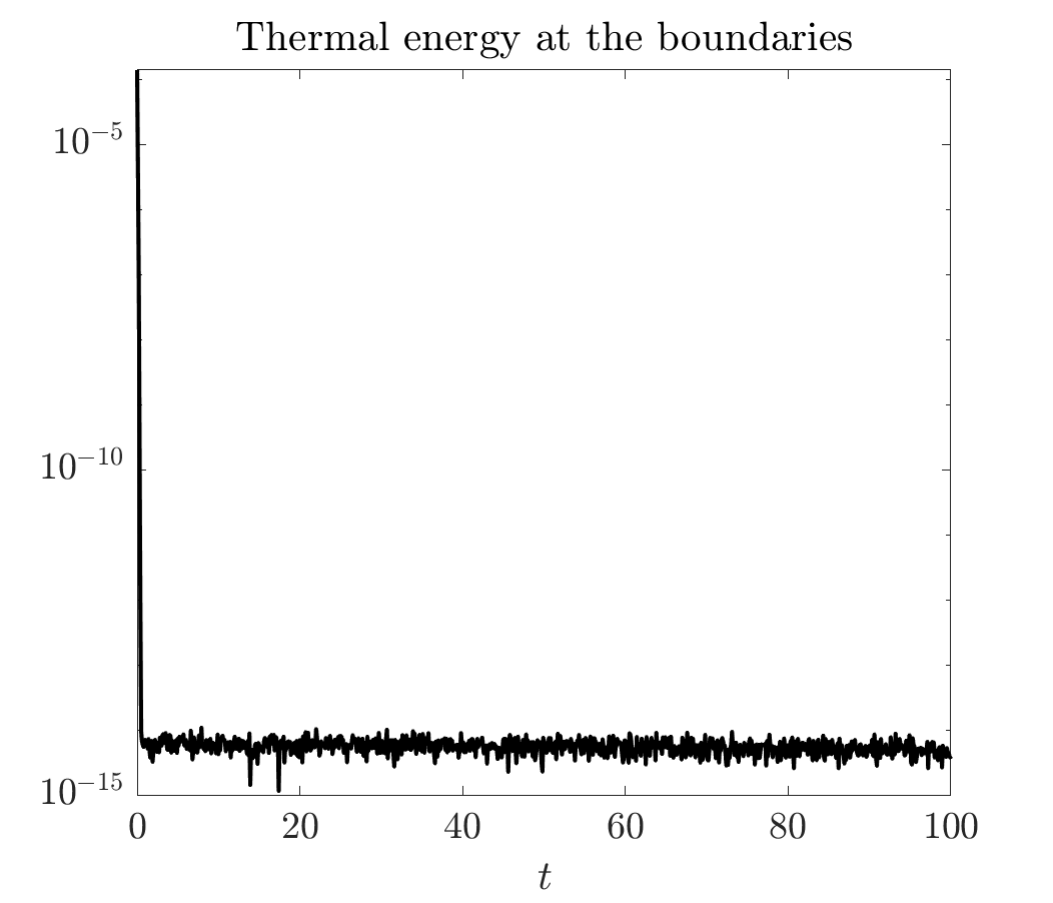} 	
	\caption{Kelvin-Helmholtz instability test: controlled case. First row: density profile at time $t=5$, $t=50$ and $t=100$. In red the velocity vector field. Second row: electric and magnetic field over time.  Third row: density percentages at the boundaries (on the left) and thermal energy at the boundaries (on the right).}
	\label{fig:kelvin_L2velocity}
\end{figure}
\begin{figure}[h!]
	\centering
	\includegraphics[width=0.322\linewidth]{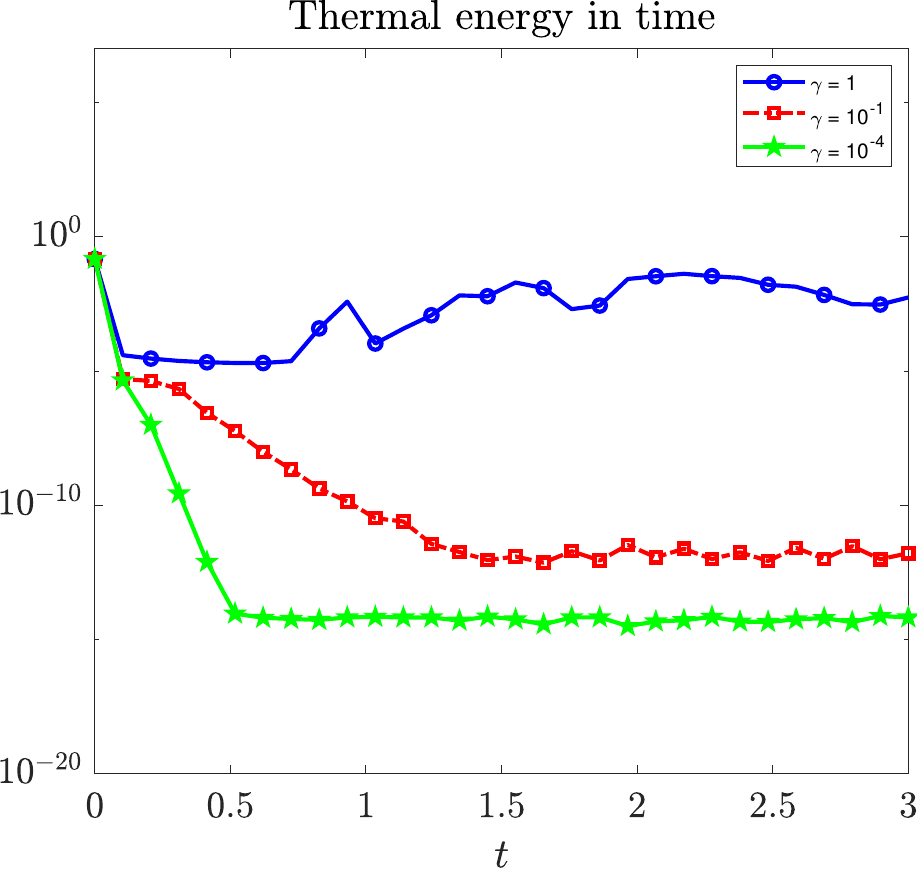} 
	\includegraphics[width=0.322\linewidth]{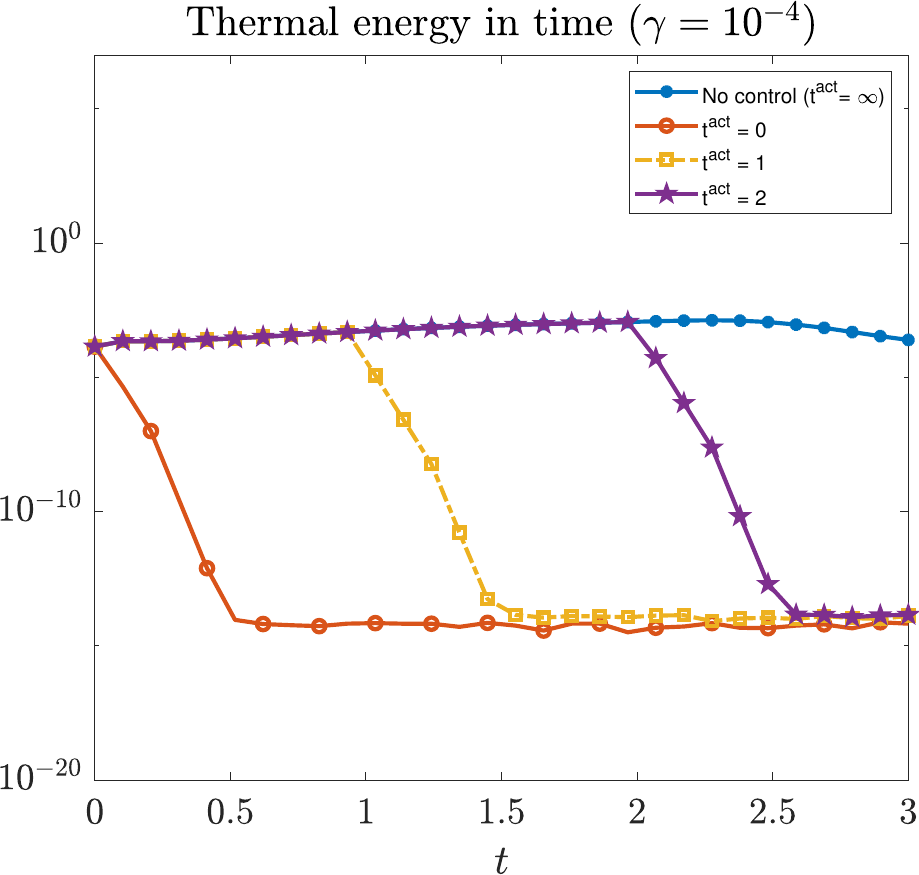} 	
	\includegraphics[width=0.328\linewidth]{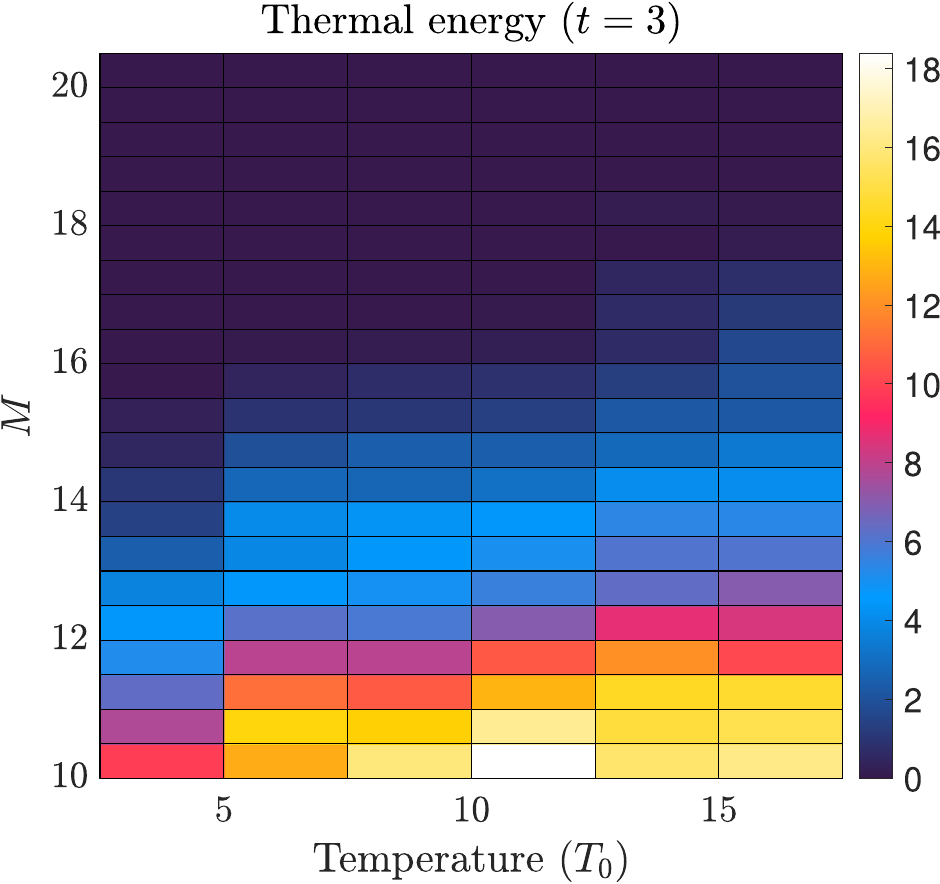} 
	\caption{Kelvin-Helmholtz instability test. Left: value of the thermal energy at the boundaries as the penalization term $\gamma$ varies. Middle: thermal energy at the boundaries assuming to activate the control at different times. Right: thermal energy at time $t=3$ as both the initial temperature and the control magnitude $M$ change.}
	\label{fig:comparison_L2_velocity_gamma} 
\end{figure}

Finally, in Figure \ref{fig:comparison_L2_velocity_gamma}, we compare the effectiveness of the control in confining the plasma in terms of thermal energy at the boundaries for different values of the penalization parameter $\gamma$ up to time $t=3$. The energy decreases proportionally to $\gamma$ showing that if one is willing to accept larger magnetic fields, the plasma can be more easily controlled. The central image in \ref{fig:comparison_L2_velocity_gamma} instead shows the value of the thermal energy at the boundaries when one assumes to activate the control at different times, namely $t^{act}=0$, $t^{act}=1$ and $t^{act}=2$. The energy starts immediately to decrease once the control is switched on,  i.e. there is no latency time. Finally, the right image shows the thermal energy at a given fixed time ($t=3$) assuming that both the initial temperature $T_0$ and the maximum magnitude of the control $M$ vary. The thermal energy decreases proportionally to the maximum allowed magnitude of the magnetic field control and increases as the initial temperature of the plasma increases, however showing that even for larger temperatures the control remains effective. 
 \section{Conclusions}\label{sec:conclusion}
 In this work, we have studied the possibility of using instantaneous control strategies to numerically describe and confine a plasma steered by the presence of an external magnetic field. The physical system is characterized by a kinetic equation of Vlasov type coupled with the Poisson equation to determine the intensity of the electric field. The Vlasov-Poisson system has been solved numerically using a second-order Particle-In-Cell method semi-implicit in time. In this setting, the optimal control problem has been approximated through an instantaneous control strategy based on a discretization of the cost functional over a single time step. In order to compute an explicit feedback control a further approximation based on a simpler time discretization of the dynamic has been adopted. Subsequently, this approximated feedback control was incorporated into the second-order PIC method.
An analytical comparison of the discretize and then optimize strategy with the optimize and then discretize alternative has been also presented. Finally, we evaluated the resulting method in various typical scenarios encountered in confinement problems. 
The numerical simulations showed that the approach outlined in this work effectively guides particles toward a desired configuration, paving the way for several future investigations. Specifically, we aim to extend this methodology to the Vlasov equation coupled with the Maxwell equations in a full three dimensional setting, to take into account the effect of collisions and to introduce uncertainty into the physical system. 
 
 \section*{Acknowledgments}
 This work has been written within the activities of GNCS and GNFM groups of INdAM (Italian National Institute of High Mathematics).
 GA thanks the support of MUR-PRIN Project 2022 PNRR No. P2022JC95T, ``Data-driven discovery and control of multi-scale interacting artificial agent systems", and of MUR-PRIN Project 2022 No. 2022N9BM3N  ``Efficient numerical schemes and optimal control methods for time-dependent partial differential equations" financed by the European Union - Next Generation EU.
 The research of LP has been supported by the Royal Society under the Wolfson Fellowship ``Uncertainty quantification, data-driven simulations and learning of multiscale complex systems governed by PDEs" and by MUR-PRIN Project 2022, No. 2022KKJP4X ``Advanced numerical methods for time dependent parametric partial differential equations with applications".
 The partial support by ICSC -- Centro Nazionale di Ricerca in High Performance Computing, Big Data and Quantum Computing, funded by European Union -- NextGenerationEU is also acknowledged.

\begin{appendices}
\section{ }\label{sec:appendix} 

In this appendix we report some validations test of the numerical methods, in particular we analyze the convergence rates on the so-called Diocotron instability test case. We first compute the numerical error and the rate of convergence in time for the two semi-implicit PIC schemes introduced in Section \ref{sec:Numerical_methods} in the uncontrolled case. Next, we present the numerical error as the number of particles $N$ increases for two different cases: a stochastic initialization and a deterministic initialization for the PIC strategy.
%
\paragraph{Diocotron instability: dynamics.} 
We focus on the solution of the Vlasov-Poisson equation with an external magnetic field $B=10$ in a two dimensional setting. We take as initial density \begin{equation}\label{eq:diocotron}
	f_0(\xx,\vv)	= \frac{1}{2\pi}\left( 1+\alpha \cos\left( k\arctan{\left( \frac{y}{x}\right) }\right) \right) e^{\left( -4\left( \sqrt{x^2+y^2}-6.5\right)^2\right)} e^{\left(- \frac{v_x^2+v_y^2}{2}\right)},
\end{equation} 
where $\alpha = 0.2$, $k = 7$, $\xx \in [-10,10]\times [-10,10]$, $\vv\in \mathbb{R}^{2}$. We set Dirichlet boundary conditions both in $x$ and in $y$.  We let the dynamics to evolve up to time $T=200$ with time step $h=0.1$. 
In Figure \ref{fig:initial_conf_diocotron} the initial density and three snapshots of the dynamics taken at time $t=50$, $t=100$, $t=200$ are shown. The presence of a strong magnetic field leads to the formation of vortices, as expected^^>\cite{filbet2016asymptotically}.
\begin{figure}[h!]
	\centering
		\includegraphics[width=0.328\linewidth]{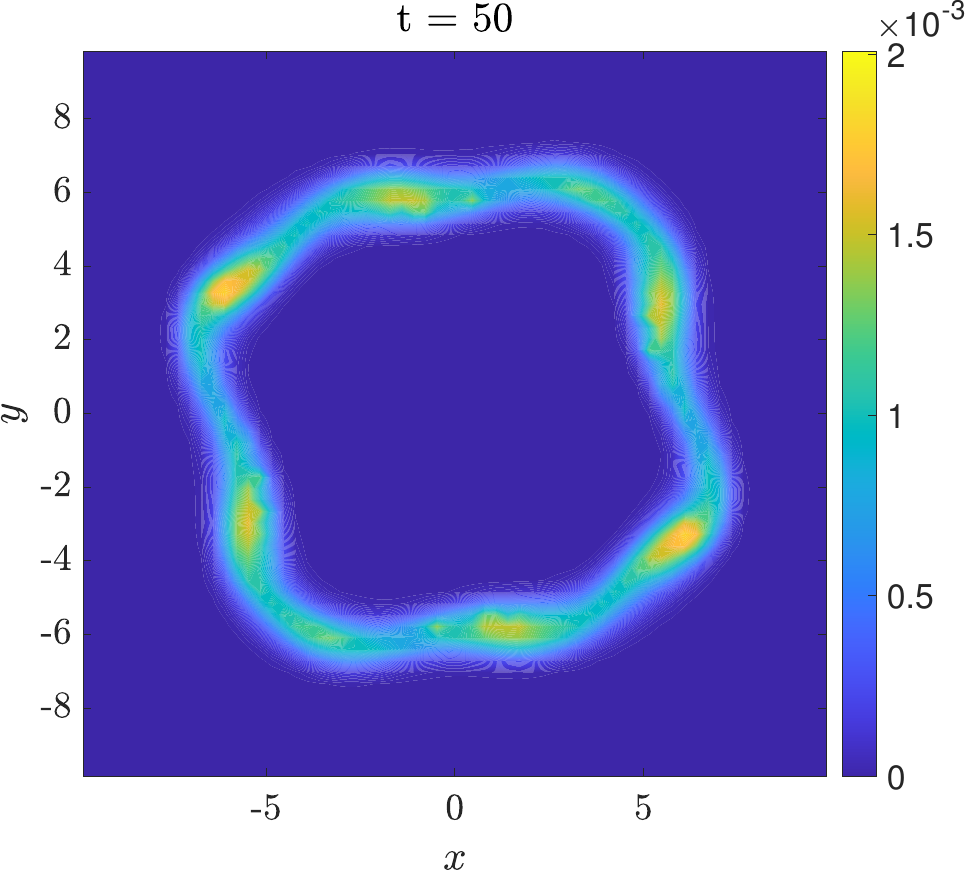}
	\includegraphics[width=0.328\linewidth]{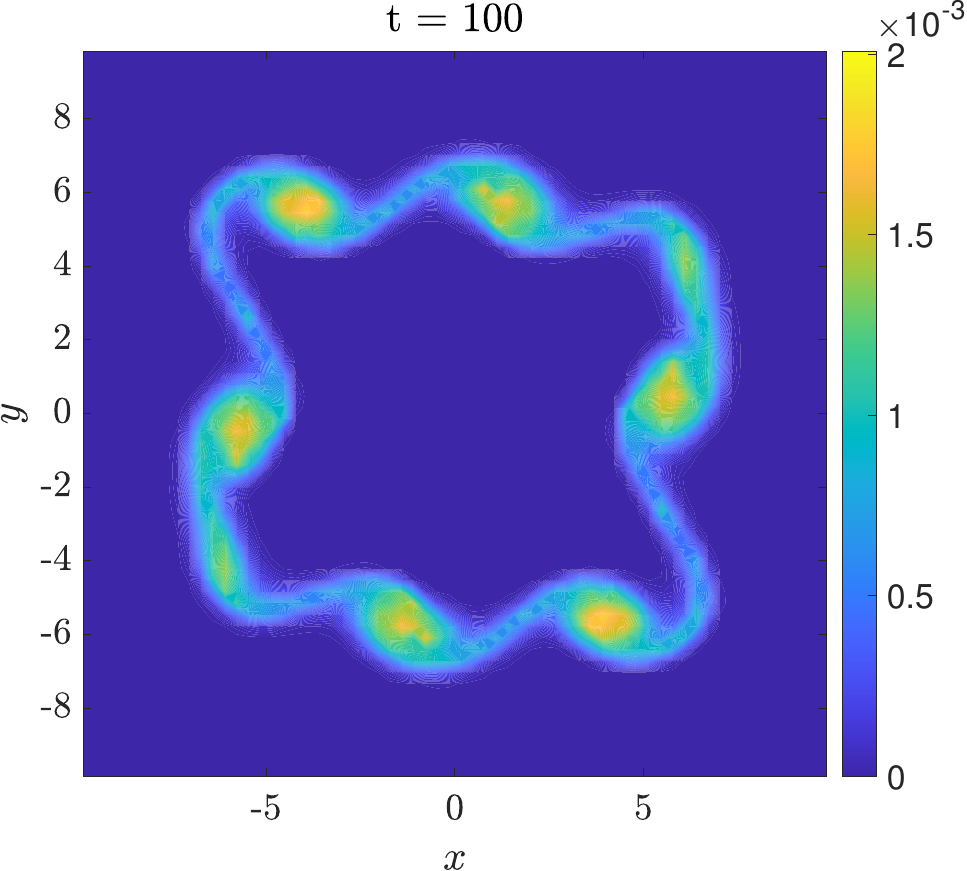}
	\includegraphics[width=0.328\linewidth]{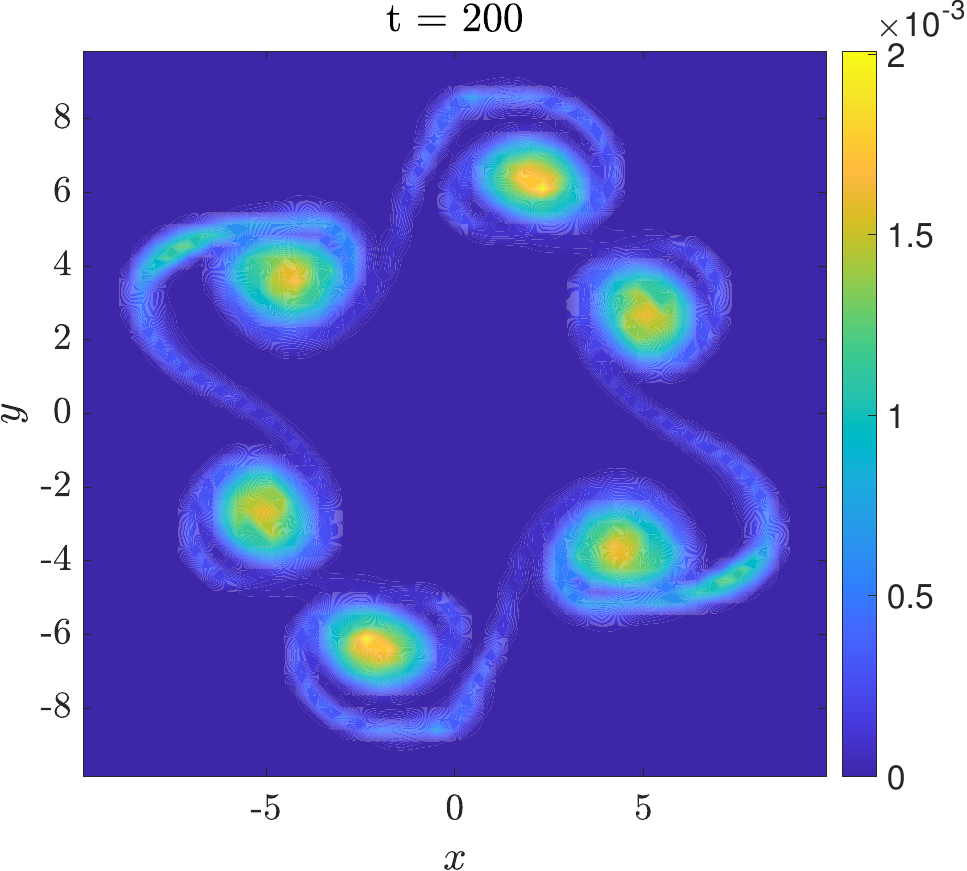}
	\caption{Diocotron instability. Snapshots of the dynamics taken at time $t=50$, $t=100$ and $t=200$, described in equation \eqref{eq:vlasov_poisson} with $B=10$.}
	\label{fig:initial_conf_diocotron}
\end{figure}
\paragraph{Diocotron instability: numerical errors.} 
Figure \ref{fig:error_time} shows the error in time
of the semi-implicit numerical schemes introduced in Section \ref{sec:Numerical_methods}. Denote by $U = [\xx_m,\vv_m]$ the position and velocity of each particle. The error in time is defined as 
\begin{equation}\label{eq:error_time}
	err_T = \Vert U^T_{rif} - U^T \Vert_{\infty},
\end{equation}
with $U_{rif}^T$ a reference solution at time $T = 1$ computed assuming $N_t =2^{10}$, and $U^T$ a solution computed  at time $T = 1$ with $N_t= 16,32,64,128$. As expected, the error of the first order scheme is proportional to $h$, while the one of the second order scheme is proportional to $h^2$, where $h$ is the time step size. 

\begin{figure}[h!]
	\centering
	\includegraphics[width=0.328\linewidth]{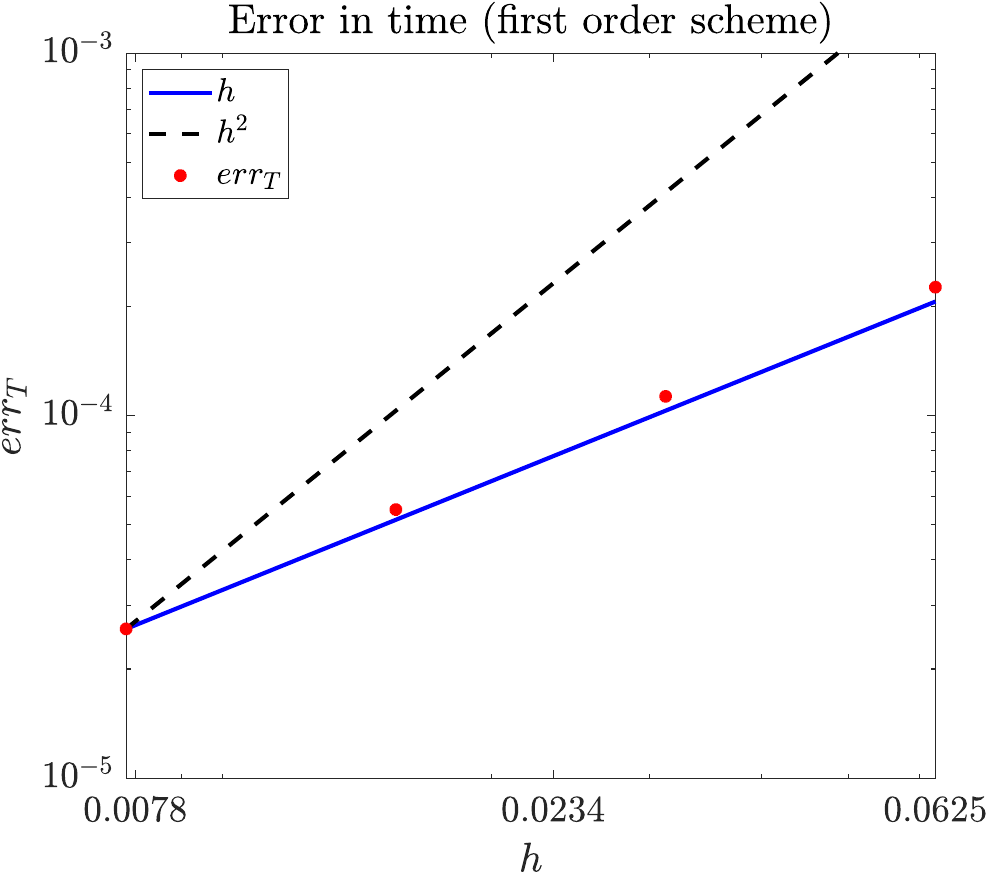}
		\includegraphics[width=0.328\linewidth]{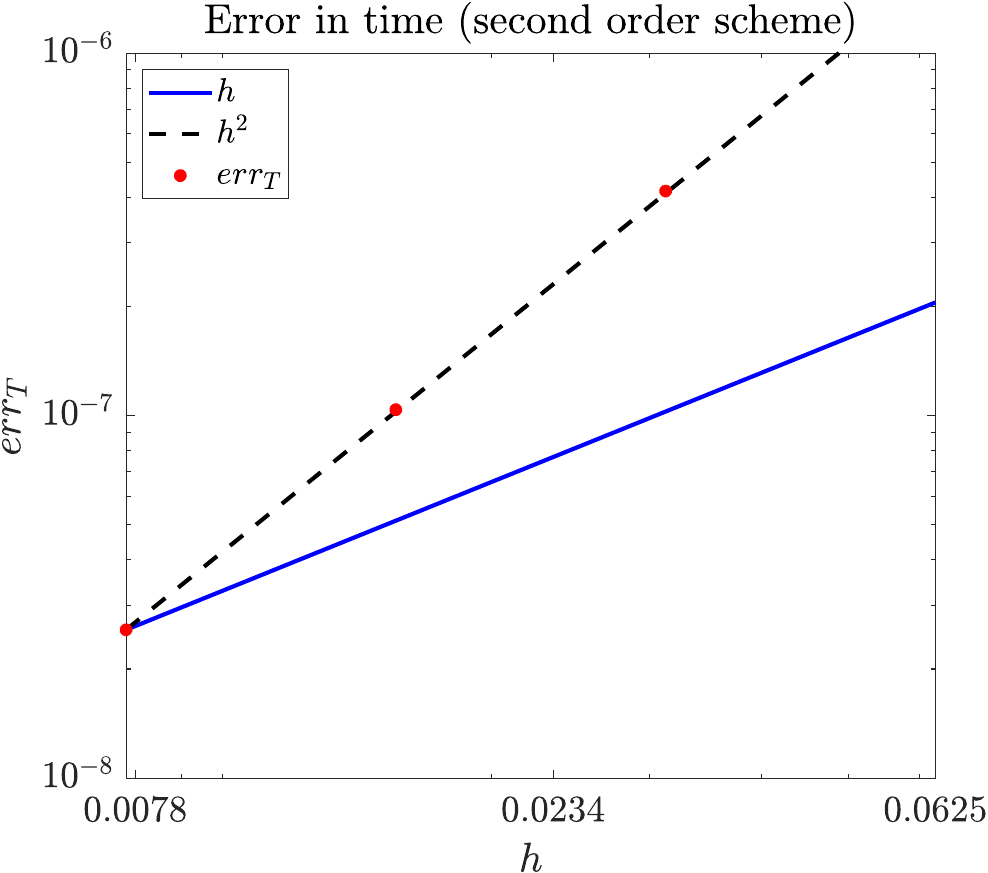}
	\caption{Diocotron instability. Error in time defined as in \eqref{eq:error_time}. On the left, the first order scheme. on the right, the second order scheme.}
	\label{fig:error_time}
\end{figure}
Finally, suppose the time step to be fixed and assume to vary the number of particles $N$. Compute the error as 
\begin{equation}\label{eq:error_N}
	err_N = \max_\vv \left\lbrace  \Vert f^{N_{rif}}(\xx,\vv) -f^N(\xx,\vv) \Vert_\infty\right\rbrace,
\end{equation}
where $ f^{N_{rif}}$ and $f^N$ represent the density function reconstructed with $N_{rif}$ and $N$ particles respectively, with $N_{rif}>> N$. Figure \ref{fig:error_N} shows the error for $N_{rif} = 10^7$ and $N=10^4,\ldots,10^6$. If we assume to generate the initial position and velocity of each particles randomly, then the error in the PIC scheme is proportional to $1/\sqrt{N}$, as show on the left. If instead we assume to generate particles following a deterministic method, i.e. defining a phase space mesh and selecting as the initial positions and velocities of the particles the corresponding center of the cells, then the error in the PIC method is proportional to $1/N$, as show on the right. 
\begin{figure}[h!]
	\centering
	\includegraphics[width=0.328\linewidth]{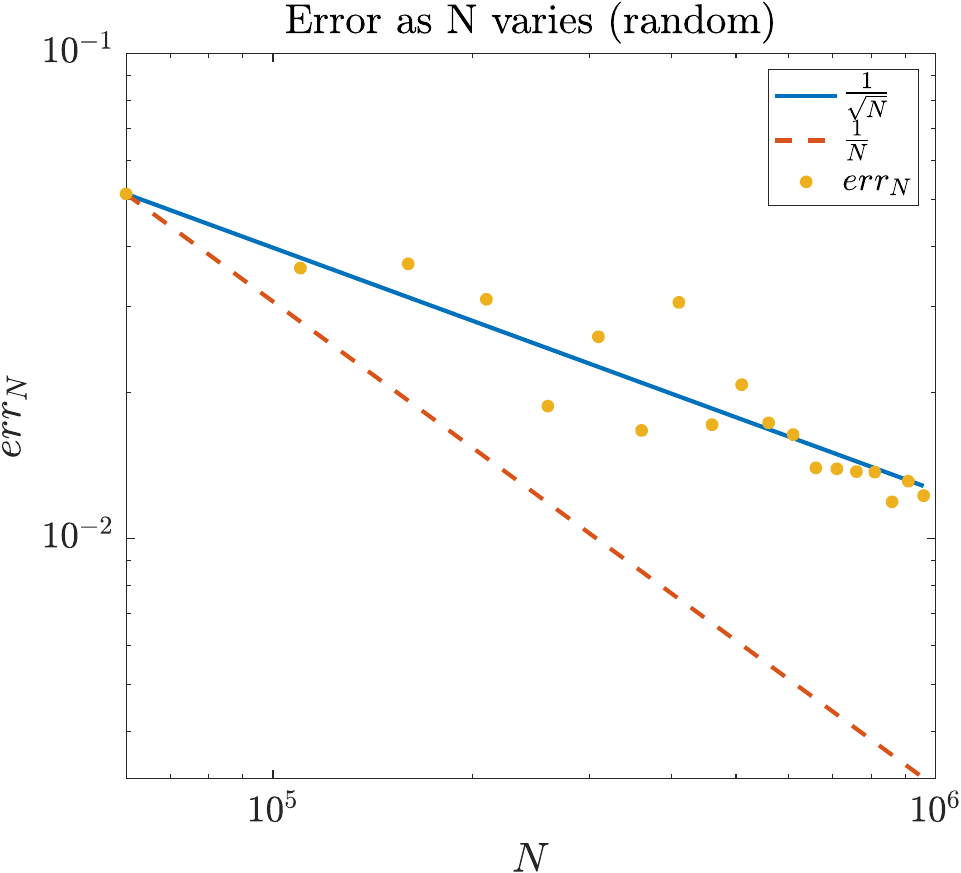}
	\includegraphics[width=0.328\linewidth]{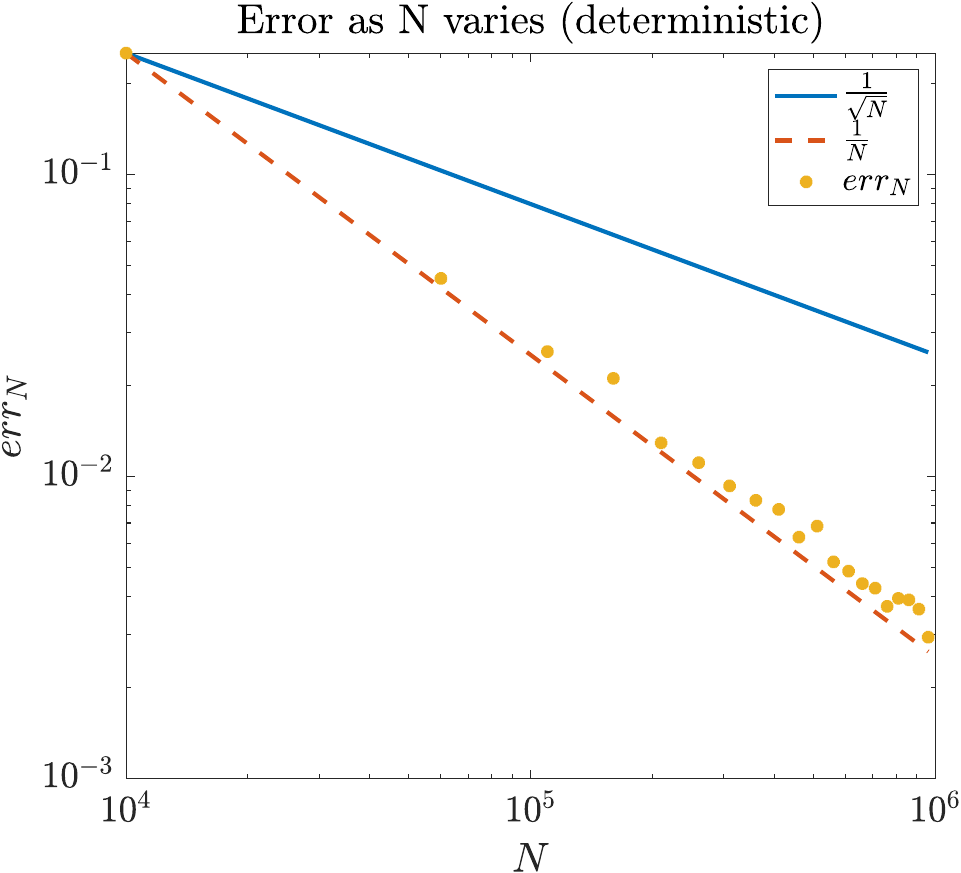}
	\caption{Diocotron instability. Error as a function of the number of particles $N$ defined as in \eqref{eq:error_N} obtained by setting the initial positions and velocities randomly (on the left) and following a deterministic method (on the right).}
	\label{fig:error_N}
\end{figure}
\end{appendices}
\bibliographystyle{abbrv}
\bibliography{biblio}
	\end{document}